\documentclass{article}
 \usepackage[utf8]{inputenc}
 \usepackage[T1]{fontenc}
\usepackage{graphicx} 
\usepackage{import}
\usepackage{macros}
\usepackage{subcaption}
\usepackage{setspace}
\usepackage[maxnames=8]{biblatex}
\usepackage{amsmath}
\usepackage{adjustbox}
\usepackage{amssymb}

\usepackage[noend]{algpseudocode}
\usepackage{algorithm}
\usepackage{siunitx}

\usepackage{subcaption}
\usepackage{hyperref}

\usepackage{tikz}
\usepackage{pgfplots}
\pgfplotsset{compat=1.18}

\usepackage{amsthm}

\theoremstyle{plain}
\newtheorem{thm}{Theorem}
\newtheorem{pte}{Property}
\newtheorem{crl}{Corollary}
\newtheorem{lem}{Lemma}
\theoremstyle{definition}
\newtheorem{dfn}{Definition}
\newtheorem{example}{Example}
\newtheorem{rmk}{Remark}
\newtheorem{dm}{Dominance rule}
\newtheorem{pr}{Pruning rule}

\usepackage{multirow}
\usepackage{tikz}
\usepackage{pgfplots}
\usetikzlibrary{tikzmark}
\usetikzlibrary{shapes}
\usetikzlibrary{arrows.meta}
\usepgfplotslibrary{fillbetween}

\usepackage{geometry}
\definecolor{SUN}{rgb}{0.0, 0.0, 1.0}   
\definecolor{PB}{rgb}{1.0, 0.01, 0.24} 
\definecolor{CR}{rgb}{0.3, 0.5, 0.0}  
\definecolor{CA}{rgb}{1.0, 0.4, 0.0}  
\definecolor{AH}{rgb}{0.70, 0.0, 0.0}  

\usepackage{ulem}

\title{BORWin: Exact algorithm based on a Bi-Objective Relaxation for Window-constrained problems}
\author{Christian Artigues, Pascale Bendotti, Alexandre Heintzmann, \\ Sandra U. Ngueveu, Cécile Rottner}
\begin{document}

\maketitle

\begin{abstract}

A mixed integer maximization problem involving several additional constraints defined with both a lower and an upper bound is considered. It is assumed that one of such constraints is more restrictive than the others. As it can be seen as a resource window constraint, it defines the so-called window-constrained problem. From a bi-objective perspective, a 2-phase algorithm, called BORWin, is devised. It stands for Bi-Objective Relaxation for Window-constrained problems. The first phase is generic for any window-constrained problem and provides a family of upper bounds based on a bi-objective relaxation of the additional constraints. It is shown that the latter bounds strongly relate to the Lagrangian dual bounds. The second phase is derived for a variant involving a graph structure, namely the window-constrained longest-path problem on an acyclic graph. The aim is to take advantage of the upper bounds to devise an efficient label extension algorithm.
It is shown that complementary upper bounds could be derived to further improve performance in some special cases.  A typical example is when the additional constraints have special knapsack structures. This is the case for the Hydro-Unit Commitment problem with a single plant (1-HUC).   
From numerical experiments for the 1-HUC, BORWin appears to be very efficient compared to state-of-the-art approaches. 
\end{abstract}

\section{Introduction}
\label{sec:intro}
Consider a mixed integer problem of the form:
	\begin{alignat}{3}
		(P) \quad & \underset{x} {\text{max}}
		& & cx \nonumber \\
		& \text{s.t.} \quad
		& & x \in X \\
		&&& \beta \leq \gamma x \leq \alpha &  \label{main_additional_constraint}\\
		&&& \widetilde{\beta}_k \leq  \widetilde{\gamma}^k x \leq  \widetilde{\alpha}_k & \qquad k \in \{1, ..., K\} \label{additional_constraints} 
		\end{alignat}
  where $X \subset \mathbb{R}^n$, $\alpha$, $\widetilde{\alpha}_k$, $\beta$, $\widetilde{\beta}_k$ are real numbers and $\gamma$, $\widetilde{\gamma}^k$ are vectors of $\mathbb{R}^n$.
  
  Subset $X$ is a set of admissible solutions such that the corresponding subproblem:
  	\begin{alignat}{3}
		(SP) \quad & \underset{x} {\text{max}}
		& & cx \nonumber \\
		& \text{s.t.} \quad
		& & x \in X 
		\end{alignat}
		is considered efficiently solved,   
  while \eqref{main_additional_constraint} and \eqref{additional_constraints} are additional constraints.
Constraint \eqref{main_additional_constraint} is defined as the  \textit{main additional constraint} and can be arbitrarily chosen. The rational behind is to pick among the additional constraints one that is particularly structuring for the problem.

In particular, the main additional constraint is defined with both a lower and an upper bound: if quantity $\gamma x$ is regarded as a \textit{resource}, the main additional constraint can be considered as a \textit{window constraint}.
Such a type of constraints is scarcely addressed in the literature and remains hard to handle.
For example, if set $X$ corresponds to the feasible set of a shortest path problem, and constraints \eqref{main_additional_constraint}-\eqref{additional_constraints} correspond to a window for each vertex $k$ of the underlying graph, then we obtain a Window-Constrained Shortest Path Problem (WCSPP), a generalization of the classical Resource-Constrained Shortest Path Problem (RCSPP). Here the main additional constraint can be chosen as the window constraint associated with the vertex having the most restrictive bounds. {While the RCSPP is well-solved by state-of-the-art methods \cite{pugliese2013survey}, the WCSPP has been less considered in the literature as discussed in Section \ref{sec:stateoftheart}.}

\paragraph{}
If we relax additional constraints \eqref{additional_constraints}, we obtain problem $(R)$, defined as problem $(SP)$ combined with the main additional constraint \eqref{main_additional_constraint}.


A standard technique, such as a Lagrangian relaxation, can be performed on this main additional constraint, producing a sub-problem of the form 
\begin{equation}
\label{lagrangian_subproblem}
    \max_{x \in X} c x + \mu \gamma x 
\end{equation}
where $\mu$ is the lagrangian multiplier.
Such {a} subproblem can also be viewed from a bi-objective perspective \cite{bryson1991parametric}: it maximizes a vector of two (conflicting) values, $\arcvalobj{1}{x} = cx$ and $\arcvalobj{2}{x} = \gamma x$, under constraint set $X$. 
{The bi-objective relaxation of $(R)$ is thus as follows:
\begin{alignat}{3}
		(BOR) \quad & \underset{x} {\text{max}}
		& & (cx, \gamma x) \nonumber \\
		& \text{s.t.} \quad
		& & x \in X 
\end{alignat}}

In this paper, we look at the problem from the bi-objective perspective. The aim is to propose a novel algorithm to solve problems with structure $(P)$, and in particular the acyclic longest-path, namely the Window-Constrained Longest Path Problem on an Acyclic Graph (AWCLPP). 
The algorithm is {called BORWin (Bi-Objective Relaxation for Window-constrained problems)} and  
{includes} two phases. The first phase is generic for any problem (P) and provides a family of upper bounds based on a bi-objective relaxation. It is inspired by the two-phase algorithm used in bi-objective optimization \cite{ulungu1995two,visee1998two}. The second phase relies on the graph structure of the AWCLPP to implicitly enumerate feasible solutions using the family of bounds provided by the first phase by means of a label extension algorithm.

Numerical experiments show that the {BORWin} algorithm is efficient compared to two {alternative} approaches.
The first one is a Mixed-Integer Linear Program solved by CPLEX and the second one is a standard RCSPP algorithm, as considered in the literature.
This comparison is carried out on a case study corresponding to the single-plant Hydro Unit Commitment Problem (1-HUC).
Roughly speaking the 1-HUC is to maximize the daily power generation {over a discrete time horizon} with water flowing through turbines {of a plant} from an upstream reservoir. The optimization of such turbines can be modeled as an acyclic longest-path problem. Moreover, the total volume of water must satisfy window contraints at each time period. At the last time period of the horizon, tighter volume windows are applied on some reservoirs (referred to as \textit{target volumes}), in order to efficiently manage the water stock throughout a yearly horizon. Such {a} target volume constraint can thus {be} defined as the main additional constraint.

As the plant is operated on a set of discrete points, the 1-HUC can alternatively be seen as a special case of the Nested Multiple-Choice Knapsack problem (NMCKP) \cite{armstrong1982multiple}, which is itself an NP-hard particular case of the AWCLPP. 
This  generalization of the knapsack problem features a partition $ \{ N_i$, $i \in K \}$ of the objects, such that only one object from each set $N_i$ can be selected. It features also $M$ knapsack constraints, which are nested over the sets $N_i$ of the partition, in the sense that for each $m \in \{1, ..., M\}$, the $m^{th}$ knapsack constraint involves objects in a union $S_m$ of sets $N_i$, such that $S_m$ is included in the set $S_{m+1}$ corresponding to the  ${m+1}^{th}$ knapsack constraint. 

There are strong links between the NMCKP and the 1-HUC.
Indeed, each window upper-bound of the 1-HUC can be identified as a knapsack constraint \cite{heintzmann2024handling}, and as the resource cumulates from one time period to the other, such window constraints have a nested structure. The multiple-choice aspect comes from the fact that at each time period, exactly one operating point must be chosen among a discrete set of feasible points. Similarly, the window lower-bounds can be seen as  demand constraints \cite{bendotti2018min}, \textit{i.e.}, knapsack constraints with negative coefficients and such constraints can also be identified to an NMCKP.
Hence, the second phase of the BORWin algorithm will exploit both the AWCLPP and the NMCKP structures in the problem considered.

Three sets of 1-HUC instances are considered: the first is a large set of realistic instances from the literature, the second 
is a large set of realistic EDF instances, and the last is a set of difficult instances inspired by the EDF ones. 
As the first instance set will prove very easy to solve in our context, the numerical experiments will focus on the second and the third one.

 Section \ref{sec:stateoftheart} presents a literature review on bi-objective optimization, its relationship with decomposition methods and algorithms for the RCSPP, WCSPP and their longest-path variants. In Section \ref{sec:phase1}, theoretical results and algorithms for the first phase of the {BORWin}  algorithm are described. Section \ref{sec:borwin} is devoted to the second phase of {BORWin}. {In Section \ref{sec:huc}, the 1-HUC problem is described, along with a dedicated state of the art and a mathematical programming formulation.} Finally, in Section \ref{sec:results} experimental results showing the efficiency of the {BORWin} algorithm on the 1-HUC problem are provided. Concluding remarks are drawn in Section \ref{sec:conclusion}.

\section{Literature review}
\label{sec:stateoftheart}
Since Phase I of our approach is based on a bi-objective relaxation of the problem,  bi-objective optimization basics are recalled in Section \ref{sec:BO2P}.  As stated in the introduction, decomposition techniques can be used alternatively with dedicated multiobjective reformulations. Related works are presented in Section \ref{sec:decompBO}. We mentioned that phase II of the BORWin algorithm provides an exact solution framework for the AWCLPP, which can be modeled as an acyclic WCSPP. Therefore, Section \ref{sec:WCSPP} formally presents the WCSPP and its bi-objective relaxation as well as related work. Finally Section \ref{sec:WCLPP}, focuses on the AWCLPP specificities.

\subsection{Bi-objective optimization and two-phase method}
\label{sec:BO2P}

Consider a bi-objective problem where $\arcvalobj{1}{x}$ and $\arcvalobj{2}{x}$ denote the first and second objective values of a solution $x$, to be maximized.
We first recall below the definitions of Pareto-dominance, Pareto-optimality and Pareto-supported solutions.
    Consider two solutions $x$ and $x'$.
     Solution $x$ weakly Pareto-dominates solution $x'$, denoted by $x\paretoweakdom x'$, if $\arcvalobj{1}{x}\geq\arcvalobj{1}{x'}$ and $\arcvalobj{2}{x}\geq\arcvalobj{2}{x'}$. Solution $x$ strongly Pareto-dominates $x'$, denoted by $x\paretostrongdom x'$, if it weakly Pareto-dominates $x'$ and if in addition $\arcvalobj{1}{x}>\arcvalobj{1}{x'}$ or $\arcvalobj{2}{x}>\arcvalobj{2}{x'}$. A solution $x$ is Pareto-optimal if no other solution strongly dominates it. A Pareto-optimal solution is Pareto-supported if it belongs to the convex hull of the feasible points in the objective space.

The two-phase method was first introduced in~\cite{ulungu1995two} as a procedure to solve bi-objective  combinatorial  optimization problems and more specifically in \cite{visee1998two} to solve the bi-objective knapsack problem.
This classical method is particularly efficient to deal with two or even three objectives when the corresponding problem with a single objective is easy to solve. The rationale behind this method is that  the space search is dramatically reduced once Pareto-supported solutions are defined.
In the bi-objective case, the two-phase method works as follows.
The first phase  obtains Pareto-supported solutions, \textit{i.e.}, Pareto-optimal solutions that form the convex envelope of the solutions' space.
The main idea is to solve the problem with the two objectives aggregated into a single one by convex combination.
Solving the problem with various convex combination parameters can yield to different Pareto-supported solutions.
The second phase is to use an enumeration algorithm, to obtain remaining Pareto-optimal solutions within the reduced search space defined by the Pareto-supported solutions.
The most straightforward approach is the enumeration of all Pareto-optimal solutions, which can be done with a Branch \& Bound variant \cite{ulungu1995two,visee1998two} or dedicated algorithms \cite{ehlers2015computing}.
As the number of Pareto-optimal solutions can be exponential, such algorithms can become very time-consuming.

One way to reduce the computational time is to generate only a subset of the Pareto-optimal solutions, for instance with a $K$-best solutions algorithm \cite{eppstein1998finding}.
The downside is that the set of solutions may not contain the most adequate Pareto-optimal solution with respect to the decision maker's preferences, or even may not contain any feasible solution for problem $(P)$.

\subsection{Decomposition and multi-objective perspective}
\label{sec:decompBO}

In \cite{bodur2022decomposition}, the authors consider a very general integer program featuring efficiently solved subproblems combined with a (small) set of coupling constraints. Problem $(P)$ corresponds to a particular case of such problems, as it features a single subproblem with only one main additional constraint among many additional constraints. 
To handle such a generic class of problems, the authors propose a multiobjective reformulation,  coupling constraints are seen as resource constraints, and the subproblems feasible sets as non-dominated Pareto solutions.
Then, they propose a family of solution algorithms based on successive  disjunctive linear programs solved via an MILP solver, and subproblem solutions to generate new non-dominated Pareto solutions.
This approach is very generic and enables to handle problems with multiple subproblems coupled by several constraints. Clearly the algorithms performed at each iteration  are expected to be quite time-consuming as they rely in particular on solving an MILP.


In the particular case of constraint structures such that  one of the coupling constraint is more important than the others, more dedicated approaches can be considered. Typically, in \cite{bryson1991parametric}, an algorithm generating a dual bound for a linear problem with a single side-constraint is proposed. In this case, the {considered} subproblem $(SP)$ should be formulated in such a way that no binary variables are necessary. Network problems with totally unimodular constraint matrices are {an} example of such linear problems. The algorithm relies on iterated solutions of subproblem of the form \eqref{lagrangian_subproblem}, where the dual multiplier $\mu$ is updated using reduced cost of non-basic variables from the simplex solution of subproblem $(SP)$. However, the paper does not propose to exploit the obtained lower bound in a second phase, whereas our approach relies on an efficient dichotomy from the two-phase method (see in Section~\ref{sec:borwin} for details). Contrarily to our approach
, this algorithm may feature degeneracy, and slow convergence.
Nevertheless, the algorithm in~\cite{bryson1991parametric} could be compared to the first phase of our algorithm as it provides similar bounds.  

In \cite{vidal2019separable}, the considered problem features nested lower and upper bounds constraints on integer variables, with a separable Lipschitz-continuous objective function. The constraints are a special case of the constraints of problem $(P)$, as all variable coefficients $\gamma$, $\widetilde{\gamma}^k$ are 0 or 1.  For this special case, the authors propose an exact polynomial-time decomposition algorithm based on recursion arguments.

\subsection{The Window-Constrained Shortest Path Problem}
\label{sec:WCSPP}

Let $G=(U,A)$ denote a directed graph with a set of vertices $U$ and a set of arcs $A$. Each vertex $u\in U$ is associated with a window constraint $[\underline{R}_u,\overline{R}_u]$ and each arc $a\in A$ has a value $V(a)$ and a resource consumption $R(a)$. Let $\pi=(U_\pi,A_\pi)$ denote a path traversing vertex set $U_\pi\subseteq U$ and arc set $A_\pi\subseteq A$. Let $\pi_{uv}$ denote its sub-path from vertex $u\in U_\pi$ to vertex $v\in U_\pi$. Let $R(\pi)$ denote the resource consumption of path $\pi$, while $V(\pi)$ denote its value:
with $R(\pi)=\sum_{a\in A_\pi} R(a)$ and $V(\pi)=\sum_{a\in A_\pi} V(a)$. Consider a particular origin vertex $s\in U$ and a target vertex $p \in U$. Let $\Pi_{sp}$ denote the set of paths from $s$ to $p$. A path $\pi\in\Pi_{sp}$ satisfying window constraints is such that $\underline{R}_u\leq R(\pi_{su})\leq \overline{R}_u$, $\forall u\in U_\pi$. The WCSPP consists in finding a path from $s$ to $p$ of minimum value satisfying the window constraints. 
The problem with a single window constraint on the target vertex is also known as the doubly-constrained shortest path problem \cite{ribeiro1985heuristic}. As an extension of the RCSPP, the WCSPP is NP-hard.

In the survey \cite{turner2011variants}, a state-of-the-art review of different shortest path variants is provided.
The most studied related variant is the RCSPP and its elementary version (ERCSPP) \cite{Irnich2005,barrett2008engineering,pugliese2013survey},corresponding to the WCSPP with no lower bound on the window constraint usually solved by  label setting algorithms \cite{boland2006accelerated}. A label can be assimilated to a partial path issued from $s$. 
Label setting algorithm are based on dynamic programming and use the following Pareto-dominance between two labels, for minimization.

\begin{dm}
\label{dm:dominancerule1}
If two feasible partial paths $\pi_{su}$ and $\pi'_{su}$ towards the same vertex $u$ are such that 
\begin{equation*}
V(\pi_{su})\leq V(\pi'_{su})\text{ and }R(\pi_{su})\leq R(\pi'_{su})
\end{equation*}
then $\pi'$ can be discarded as it cannot lead to a better solution than $\pi$.
\end{dm}

Two-phase Lagrangian relaxation  approaches have also been considered to solve the RCSPP \cite{santos2007improved,carlyle2008lagrangian}. Focusing on a single window constraint only on vertex $p$, the first phase computes the dual Lagrangian bound $\max_{\lambda\geq 0} \min_{\pi\in\Pi_{sp}}  V(\pi) + \lambda R(\pi) - \lambda \overline{\pi}_p$. The second phase aims at closing the gap using this bound for pruning and/or guiding the search in an enumeration procedure, that could be either a branch-and-bound algorithm \cite{carlyle2008lagrangian}, a $K-$shortest path algorithm \cite{santos2007improved} or a label setting algorithm \cite{dumitrescu2003improved}.

Consider now the bi-objective relaxation of the WCSPP, obtained by ignoring all window constraints except the one on $p$. The latter constraint is transformed into an objective, thus yielding the bi-objective shortest path problem (BOSPP) $\min \{V(\pi),R(\pi)\}$, s.t. $\pi\in\Pi_{sp}$.
The link between the RCSPP and the BOSPP is often explicit in the aforementioned two-phase Lagrangian relaxation-based methods. The Lagrangian optimization results in two successive non-dominated paths $\pi^1$ and $\pi^2$ in the convex hull of the bi-objective solution space. Path $\pi^1$ such that $R(\pi^1) \leq \overline{R}_p$ provides an UB and path $\pi^2$ such that $R(\pi^2)> \overline{R}_p$ defines the Lagrangian LB. The second phase aims at searching the optimal path between those two to close the gap. Even though Lagrangian relaxation is not necessarily used in recent state-of-the-art approaches for the RCSPP, the   A*-based algorithm in \cite{ahmadi2024enhanced} is shown to be  one of the most competitive algorithm to date for the RCSPP with a single constraint on the terminal vertex. It is also used to solve the BOSPP.

More specifically,  little work has been reported on the RCSPP with equality constraints, or window constraints (WCSPP).
In \cite{ribeiro1985heuristic}, a heuristic using Lagrangian relaxation and BOSPP concepts is proposed to solve the WCSPP with only one window constraint at the last vertex. The heuristic builds an $s,p-$cut in the network and enumerates, for each origin vertex of an arc of the cut, the non-dominated points of the Lagrangian relaxation by means of a parametric shortest path procedure. Then, the partial solutions are extended by enumeration. In \cite{beasley1989algorithm}, the problem is solved by branch-and-bound where the Lagrangian relaxation bound is used to prune the search tree.

The three-phase algorithm described in \cite{zhu2007three}
solves the RCSPP with window constraints on acyclic graphs.
The main idea is to extend the graph, such that if multiple paths lead to the same vertex from the source, a new vertex is created for each of these paths.
Once the graph has been extended in this way, the problem is solved with a pseudo-polynomial time algorithm. 
In \cite{jin2024two}, a two-phase method is proposed to solve the elementary doubly-constrained shortest path problem. The first phase performs various reductions on the graph by estimating minimum and maximum consumptions along any elementary path. The second phase is an adaptation of the  depth-first search with pruning strategies of \cite{lozano2013exact}. 

In \cite{van2018shortest,van2021decomposition}, the WCSPP is considered for an application to the 1-HUC, which will be addressed in Section \ref{sec:huc}. The authors propose a label extension algorithm inspired by the standard RCSPP algorithms. Because of lower bound constraints present in the WCSPP, the dominance rule \ref{dm:dominancerule1} does not hold in general. Therefore additional sufficient conditions must be added, thus seriously weakening its effectiveness. The authors remark that under a restricting condition the following dominance rule can be applied :

\begin{dm}
\label{dm:dominancerule2}
Given two feasible partial paths $\pi$ and $\pi'$ from $s$ to $u$ , let $\Gamma(u)$  be the set of vertices reachable from $u$, if $\pi$ and $\pi'$ satisfy the condition of dominance rule \ref{dm:dominancerule1} and $\pi$ satisfies the following condition.
\begin{equation*}
R(\pi)\geq \underline{R}(v), \forall v\in\Gamma(u) 
\end{equation*}
 then $\pi'$ can be discarded.\end{dm}


\subsection{The Window-Constrained Longest Path Problem on an acyclic graph}
\label{sec:WCLPP}

For the sake of consistency with the literature, we presented in the previous section the shortest path variants of the problems WCSPP and BOSPP. 
Note that the WCSPP reduces to the window-constrained longest path problem where the value is to be maximized subject to the graph being acyclic. 
In the sequel we will consider the Acyclic Window-Constrained Longest Path Problem 
 (AWCLPP) as it directly applies to the 1-HUC problem considered as case study.
  When graph $G$, defined in section \ref{sec:WCSPP}, is acyclic, solving the WCSPP on graph $G$ is equivalent to solving the AWCLPP on the same graph $G$, with opposite arc values $-V(a)$, $\forall a\in A$.
 Even when $G$ is acyclic, the problem remains NP-hard. Likewise the Acyclic Bi-Objective Longest Path Problem (ABOLPP) can be defined.

 \begin{example}
\label{ex:WCLPP}
Figure \ref{ex:WCLPP} displays an AWCLPP instance with 5 vertices. Each vertex has a window constraint except vertex $s$. Values $(V(a),R(a))$ appear near each arc $a$. 
\begin{figure}
    \centering
    \includegraphics{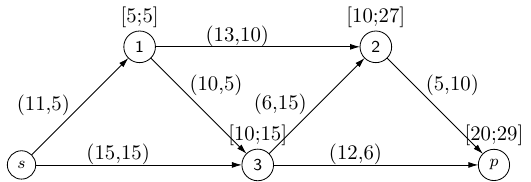}
    \caption{Graph for an instance of the AWCLPP}
    \label{fig:WCLPP}
\end{figure}
Table \ref{tab:paths} gives, the value of each path of this graph and the resource consumption for each node of each path. Each consumption exceeding a window upper bound (resp. lower bound) is marked with $\uparrow$ (resp. $\downarrow$). Here only $\pi_2$ and $\pi_3$ are feasible and the longest feasible path is $\pi_3$.

\end{example}

\begin{table}
\begin{center}
\begin{tabular}{|l|lllll|c|}
\hline
\multicolumn{6}{|c|}{path}  & value\\
\hline
\multirow{2}{*}{$\pi_1$} & $s$ & 1 & 3 & 2 & $p$ & \multirow{2}{*}{32}\\
 & 0 & 5 & 10 & 25 & 35$\uparrow$ & \\
\hline
\multirow{2}{*}{$\pi_2$}  & $s$ & 3 & $p$ & & & \multirow{2}{*}{27} \\
 & 0 & 15 & 21 & & & \\
\hline
\multirow{2}{*}{$\pi_3$}  & $s$ & 1 & 2 & $p$ & & \multirow{2}{*}{31} \\
& 0 & 5 & 15 & 25 &  & \\
\hline
\multirow{2}{*}{$\pi_4$}  & $s$ & 1 & 3 & $p$ & & \multirow{2}{*}{33} \\
 & 0 & 5 & 10 & 16$\downarrow$ & & \\
 \hline 
\multirow{2}{*}{$\pi_5$}  & $s$ & 3 & 2 & $p$ & & \multirow{2}{*}{26} \\
 & 0 & 15 & 30$\uparrow$ & 40$\uparrow$ & & \\
 \hline
\end{tabular}
\end{center}
\caption{Resource consumption and values of the paths in the graph displayed in Figure~\ref{fig:WCLPP}}
\label{tab:paths}
\end{table}


Both the WCSPP and the AWCLPP fall into the general scheme of problem $(P)$ presented in section \ref{sec:intro}. The feasible set $X$ is the set of paths in $\Pi_{sp}$ and problem (SP) is the easy-to-solve shortest/longest path problem on an acyclic graph. The main additional window constraint \eqref{main_additional_constraint} is on terminal vertex $p$, and the remaining additional window constraints on other vertices. Without loss of generality, the main additional window constraint could be defined on any vertex of the graph, not necessarily the terminal one.

Note that a single window constraint on the terminal vertex is considered in all the references cited in Section~\ref{sec:WCSPP} but \cite{van2018shortest,van2021decomposition}. In this paper we look for the extended case where window constraints occur on several vertices. Even if the first stage of the proposed approach computes also the lagrangian lower bound by ignoring all window constraints except the one on the last vertex, it is obtained  for general problem $(P)$ by an original bi-objective relaxation algorithm. Then,  the second stage uses a label extension algorithm as in \cite{van2018shortest,van2021decomposition}, except that we guide the search and perform additional pruning thanks to the lagrangian bound and further enhancements.


\section{Phase I of {BORWin}: bi-objective relaxation upper bounds}
\label{sec:phase1}

{In this section, we present the first phase relative to BORWin. The first phase  computes a family of upper bounds for $(P)$ based on the considered bi-objective relaxation {$(BOR)$}.
This bi-objective relaxation is presented in details in Section \ref{sec:borel}.
The properties yielding the family of upper bounds and the tightest of them are established in Section \ref{familyub}. The first-phase algorithm is presented in Section \ref{sec:exfirstphase}. An illustrative application of the first-phase algorithm to the AWCLPP is presented in  \ref{sec:phase1AWCLPP}.}

\subsection{Bi-objective relaxation}
\label{sec:borel}

The idea of the bi-objective relaxation is the following:  additional constraints \eqref{additional_constraints} are relaxed, and the main additional constraint \eqref{main_additional_constraint} is turned into an optimization criterion.

To do so, consider an optimal solution $x^*_{SP}$ of subproblem $(SP)$.
We distinguish 2 cases :
\begin{itemize}
    \item ``Locally Infeasible by Default" (LID), if $\gamma x^*_{SP} \leq \beta $ 
    \item ``Locally Infeasible by Excess" (LIE), if $\gamma x^*_{SP} > \alpha $
\end{itemize}
{The term ``locally'' means that feasibility is only related to $(SP)$ plus constraint \eqref{main_additional_constraint}. Side-constraints \eqref{additional_constraints} are ignored 
in this phase and will be dealt with in the second phase of the method.}

Without loss of generality, we suppose in the following that we are in the LID case.
{Indeed, if we were in the LIE case, we could rewrite constraint \eqref{main_additional_constraint} as $-\alpha \leq -\gamma x \leq -\beta$ and obtain a new formulation of problem $(P)$ falling into the LID case.}
Supposing the LID case, in this section we consider the following relaxation of $(P)$:
\begin{alignat}{3}
     (R) \quad & \underset{x} {\text{max}}
    & & cx \nonumber \\
    & \text{s. t.} \quad
    & & x \in X \nonumber \\
    &&& \beta \leq \gamma x \label{borne_min_ressource}
\end{alignat}

The results described in this section will rely on the bi-objective relaxation $(BOR)$ of $(R)$, maximizing the vector of objective $cx$ and $\gamma x$.

Each objective component is denoted by $\arcvalobj{1}{x} = cx$ and $\arcvalobj{2}{x} = \gamma x$. \vspace{2mm}
\begin{dfn}[Aggregated value]
Consider a {scalar} $\delta \in \mathbb{R}^+ $. 
For a solution $x \in X$, we define $\agg{\delta}{x}=  \arcvalobj{1}{x}+\delta \cdot \arcvalobj{2}{x}$ as the aggregated value of a solution $x$ considering {weight $\delta$ for objective $\arcvalobj{2}{x}$. {By convention when $\delta=+\infty$, $\agg{+\infty}{x}=  \arcvalobj{2}{x}$}}.
\end{dfn}
{Note that when $\delta=0$ then $\agg{0}{x}=  \arcvalobj{1}{x}$}.
Note also that a solution $x$ is Pareto-supported if there exists  $\delta$ such that $\agg{\delta}{x}\geq \agg{\delta}{x'}$ for any other solution $x'$, because in this case $x$ belongs to the convex hull of the feasible points in the objective space.

We provide 
{the following definition 
for the Pareto-equality and Pareto-difference.}

\begin{dfn}[Pareto-equality and Pareto-difference]
    Consider two solutions $x$ and $x'$.
    Solutions $x$ and $x'$ are Pareto-equal, denoted by $x\paretoeq{} x'$, if $\arcvalobj{1}{x}=\arcvalobj{1}{x'}$ and $\arcvalobj{2}{x}=\arcvalobj{2}{x'}$.
    Otherwise, $x\paretoneq{} x'$. \label{def:Pareto}
\end{dfn}

In the following, we denote by $S$ the ordered set of Pareto-supported solutions in increasing value of the first objective $\arcvalobj{1}{x} = cx$.
We do not consider in $S$ two solutions with the exact same values, meaning that for two solution $x$ and $x'$ in $S$, $x\paretoneq{} x'$.

\subsection{Family of upper-bounds}
\label{familyub}

In this section, we first define some properties for the solutions of $(BOR)$.
Besides, we show that one can obtain a family of upper bounds for $(P)$ using solutions of $(BOR)$ with a well-chosen $\delta$ for their aggregated value. 
In Section \ref{sec:exfirstphase}, we will use such results to efficiently compute these bounds in the first phase of the {BORWin} algorithm. The second phase will also exploit these results to restrict the search space.

First, we can introduce an upper bound for $\arcvalobj{1}{x^{*}}$ with $x^{*}$ the optimal solution of $(P)$.

\vspace{2mm}
\begin{pte}
    \label{pte:graphs_twophases:trivial_bound}
    Let $x$ be the Pareto-supported solution maximizing $\arcvalobj{1}{x}$ and $x^{*}$ the optimal solution of $(P)$.
    Then, $\arcvalobj{1}{x}$ is a valid upper bound for $\arcvalobj{1}{x^{*}}$.
\end{pte}
\begin{proof}
    As $x^{*}$ is a valid solution for $(P)$, it is also a valid solution for $(BOR)$.
    Consequently, if $\arcvalobj{1}{x^{*}}>\arcvalobj{1}{x}$, then $x$ cannot be the Pareto-supported solution maximizing $\arcvalobj{1}{x}$.
    We then deduce $\arcvalobj{1}{x^{*}} \leq \arcvalobj{1}{x}$. 
\end{proof}
However, it is possible to provide a tighter upper bound.
We first introduce the following lemma.

\vspace{2mm}
\begin{lem}
    \label{lem:graphs_twophases:not_pareto}
    Let $x_{1}$, $x_{2}$ and $x_{3}$ be three solutions of $(BOR)$ with $\arcvalobj{1}{x_{1}}\leq \arcvalobj{1}{x_{2}}\leq \arcvalobj{1}{x_{3}}$.
    Consider that these solutions do not dominate each other.
    Let $\delta$ be such that $\agg{\delta}{x_{1}}=\agg{\delta}{x_{3}}$.
    If $\agg{\delta}{x_{2}}<\agg{\delta}{x_{1}}$ then $x_{2}$ cannot be Pareto-supported.
\end{lem}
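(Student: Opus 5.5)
The plan is to argue by contradiction directly from the definition of Pareto-supported adopted in the paper. A solution $x$ is Pareto-supported if there exists a weight $\delta'$ (possibly $0$ or $+\infty$) such that $\agg{\delta'}{x}\geq\agg{\delta'}{x'}$ for every other solution $x'$. So I will suppose that $x_{2}$ is Pareto-supported with some weight $\delta'$, and show that for every $\delta'$ one of $x_{1}$, $x_{3}$ has a strictly larger aggregated value than $x_{2}$.

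First I fix the geometry. Since the solutions do not dominate each other and $\arcvalobj{1}{x_{1}}\leq\arcvalobj{1}{x_{3}}$, we must have $\arcvalobj{2}{x_{1}}\geq\arcvalobj{2}{x_{3}}$. If $\arcvalobj{1}{x_{1}}=\arcvalobj{1}{x_{3}}$, then non-domination forces $x_1\paretoeq x_3$. In that case $\delta$ is arbitrary, and the statement reduces to $x_2$ being compared with a single point; I will treat this degenerate case separately, or exclude it using the distinctness convention of $S$. In the main case $\arcvalobj{1}{x_{1}}<\arcvalobj{1}{x_{3}}$ and $\arcvalobj{2}{x_{1}}>\arcvalobj{2}{x_{3}}$, so $\delta=\frac{\arcvalobj{1}{x_{3}}-\arcvalobj{1}{x_{1}}}{\arcvalobj{2}{x_{1}}-\arcvalobj{2}{x_{3}}}>0$ is well defined and unique.

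The key step is to write $\agg{\delta'}{x_{i}}$ as an affine function of the weight. Using the hypothesis, I split on the sign of $\delta'-\delta$:
\begin{itemize}
\item If $\delta'\leq\delta$, compare $x_2$ with $x_3$. We have $\agg{\delta'}{x_{3}}-\agg{\delta'}{x_{2}}=(\agg{\delta}{x_{3}}-\agg{\delta}{x_{2}})+(\delta'-\delta)(\arcvalobj{2}{x_{3}}-\arcvalobj{2}{x_{2}})$. The first bracket is strictly positive by hypothesis. The second term is nonnegative because $\arcvalobj{2}{x_{3}}\leq\arcvalobj{2}{x_{2}}$, which follows from non-domination and $\arcvalobj{1}{x_{2}}\leq\arcvalobj{1}{x_{3}}$. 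Hence $x_3$ strictly beats $x_2$.
\item If $\delta'\geq\delta$, including $\delta'=+\infty$, the symmetric computation with $x_1$ works, using $\arcvalobj{2}{x_{1}}\geq\arcvalobj{2}{x_{2}}$.
\end{itemize}
In both cases $x_2$ is not a maximizer for $\delta'$, which contradicts the assumption that it is Pareto-supported.

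The main obstacle is the bookkeeping of the sign facts derived from non-domination, particularly when some first-objective values tie. If $\arcvalobj{1}{x_{2}}=\arcvalobj{1}{x_{3}}$, non-domination gives $\arcvalobj{2}{x_2}=\arcvalobj{2}{x_3}$, so $x_2\paretoeq x_3$ and $\agg{\delta}{x_2}=\agg{\delta}{x_3}=\agg{\delta}{x_1}$, which contradicts the hypothesis. Thus the hypothesis itself rules out these ties, and the inequalities used above hold. The boundary weights $\delta'=0$ and $\delta'=+\infty$ are handled by the same inequalities: for $\delta'=0$ compare first objectives, and for $\delta'=+\infty$ compare second objectives. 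A geometric restatement is also available: $x_2$ lies strictly below the segment joining $x_1$ and $x_3$ in objective space, so it is not on the convex hull boundary. I will use the algebraic version because it handles the endpoint weights cleanly.
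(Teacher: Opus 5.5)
Your proof is correct and follows the same argument as the paper's. Non-domination gives $V_2(x_1)\geq V_2(x_2)\geq V_2(x_3)$; then for weights $\delta'\geq\delta$ the solution $x_1$ strictly beats $x_2$, and for $\delta'\leq\delta$ the solution $x_3$ does, so $x_2$ maximizes $\mu_{\delta'}$ for no $\delta'$. Your explicit affine identity and your handling of ties and of the endpoint weights $0$ and $+\infty$ make rigorous what the paper states in one line. One correction: the degenerate case $V_1(x_1)=V_1(x_3)$ should not rely on the distinctness convention of $S$, since that convention does not apply to arbitrary solutions of $(BOR)$. It is vacuous anyway, because it forces $x_2$ to be Pareto-equal to $x_1$, which contradicts the hypothesis.
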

\begin{proof}
   { Solutions $x_1$, $x_2$ and $x_3$ do not dominate each other, therefore :  $\arcvalobj{2}{x_{1}} \geq \arcvalobj{2}{x_{2}} \geq \arcvalobj{2}{x_{3}}$.
    Thus, if we suppose $\agg{\delta}{x_{2}}<\agg{\delta}{x_{1}}$, then for any $\delta'>\delta$, $\agg{\delta'}{x_{2}}<\agg{\delta'}{x_{1}}$, and as $\agg{\delta}{x_{1}}=\agg{\delta}{x_{3}}$, for any $\delta'<\delta$, $\agg{\delta'}{x_{2}}<\agg{\delta'}{x_{3}}$. Therefore, $x_2$ is not Pareto-supported.}
\end{proof}

We introduce \textbf{Properties \ref{pte:graphs_twophases:valid_upper_bound1}} to \textbf{\ref{pte:graphs_twophases:best_R}} and \textbf{Theorem \ref{thm:graphs_twophases:best_upper_bound}} to provide a tighter upper bound for $\arcvalobj{1}{x^{*}}$.

\vspace{2mm}
\begin{pte}
    \label{pte:graphs_twophases:valid_upper_bound1}
    Let $(x_{1},x_{2})$ be a pair of consecutive solutions in $S$.
    Let $\delta$ be such that $\agg{\delta}{x_{1}}=\agg{\delta}{x_{2}}$.
    For each  solution $x$ of $(BOR)$, $\agg{\delta}{x} \leq \agg{\delta}{x_{1}}$.
    
\end{pte}

The proof of \textbf{Property \ref{pte:graphs_twophases:valid_upper_bound1}} is in \textbf{Appendix \ref{anx:graphs_twophases:valid_upper_bound1}}.

\vspace{2mm}
\begin{crl}
    \label{crl:graphs_twophases:valid_upper_bound1}
    Let $(x_{1},x_{2})$ be a pair of solutions in $S$.
    Let $\delta$ be such that $\agg{\delta}{x_{1}}=\agg{\delta}{x_{2}}$.
    Let $x_{3}$ be the solution maximizing $\agg{\delta}{x_{3}}$.
    If solution $x_{3}\paretoneq{} x_{1}$ and $x_{3}\paretoneq{} x_{2}$, then $\arcvalobj{1}{x_{3}} \in ]\arcvalobj{1}{x_{1}};\arcvalobj{1}{x_{2}}[$ and $\arcvalobj{2}{x_{3}} \in ]\arcvalobj{2}{x_{2}};\arcvalobj{2}{x_{1}}[$.
    If solution $x_{3}\paretoeq{} x_{1}$ or $x_{3}\paretoeq{} x_{2}$, then $x_{1}$ and $x_{2}$ are consecutive solutions in S.
\end{crl}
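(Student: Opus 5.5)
Without loss of generality I take $\arcvalobj{1}{x_1} < \arcvalobj{1}{x_2}$; the corollary is symmetric in $x_1$ and $x_2$, and the strict inequality is needed for the open interval to make sense. Since $x_1,x_2\in S$ are distinct Pareto-optimal solutions, neither dominates the other, so $\arcvalobj{2}{x_1} > \arcvalobj{2}{x_2}$. Hence $\delta = (\arcvalobj{1}{x_2}-\arcvalobj{1}{x_1})/(\arcvalobj{2}{x_1}-\arcvalobj{2}{x_2})$ is well defined and positive.

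Write $v=\agg{\delta}{x_1}=\agg{\delta}{x_2}$. Since $x_3$ maximizes $\agg{\delta}{\cdot}$, we have $\agg{\delta}{x_3}\geq v$.

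\textbf{First claim.} Assume $x_3\paretoneq x_1$ and $x_3\paretoneq x_2$. In the objective plane, $x_3$ lies on or above the line $L=\{y : y_1+\delta y_2 = v\}$ through $x_1$ and $x_2$. I would argue by contradiction, supposing $\arcvalobj{1}{x_3}\leq \arcvalobj{1}{x_1}$.

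If $\arcvalobj{1}{x_3} = \arcvalobj{1}{x_1}$, then $\agg{\delta}{x_3}\geq v$ gives $\arcvalobj{2}{x_3}\geq \arcvalobj{2}{x_1}$. Equality is excluded because $x_3\paretoneq x_1$, so $x_3$ strongly dominates $x_1$. This contradicts the Pareto-optimality of $x_1$.

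If $\arcvalobj{1}{x_3} < \arcvalobj{1}{x_1}$, then $x_3$ and $x_2$ both lie on or above $L$, with $x_1$ strictly between them in the first objective and on $L$. I would then take the weight $\delta'$ for which $\agg{\delta'}{x_3}=\agg{\delta'}{x_2}$ and apply Lemma~\ref{lem:graphs_twophases:not_pareto} to $(x_3,x_1,x_2)$, showing $\agg{\delta'}{x_1}<\agg{\delta'}{x_3}$ and hence that $x_1$ is not Pareto-supported.

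This degenerate case is where I expect the main obstacle. If $\agg{\delta}{x_3}=v$ exactly, the three points are collinear and the strict inequality required by Lemma~\ref{lem:graphs_twophases:not_pareto} fails. In that situation I would instead use the convention behind $S$: collinear points on a supporting line are all Pareto-supported. That would make $x_3$ a member of $S$ lying strictly between $x_1$ and $x_2$ in $\arcvalobj{1}{\cdot}$, which gives the ordering contradiction directly.

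The case $\arcvalobj{1}{x_3}\geq \arcvalobj{1}{x_2}$ is handled symmetrically. This yields $\arcvalobj{1}{x_3}\in\,]\arcvalobj{1}{x_1};\arcvalobj{1}{x_2}[$.

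For the second objective, suppose $\arcvalobj{2}{x_3}\leq \arcvalobj{2}{x_2}$. Together with $\arcvalobj{1}{x_3}<\arcvalobj{1}{x_2}$, this means $x_2$ strongly dominates $x_3$, so $\agg{\delta}{x_3}<\agg{\delta}{x_2}=v$ because $\delta>0$. This contradicts the maximality of $x_3$. The argument for $\arcvalobj{2}{x_3}<\arcvalobj{2}{x_1}$ is symmetric, using $\arcvalobj{1}{x_3}>\arcvalobj{1}{x_1}$.

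\textbf{Second claim.} Assume $x_3\paretoeq x_1$ or $x_3\paretoeq x_2$. Then $\max_{x}\agg{\delta}{x}=v$, so every solution satisfies $\agg{\delta}{x}\leq v$. Suppose some $y\in S$ lies strictly between $x_1$ and $x_2$ in $\arcvalobj{1}{\cdot}$. If $\agg{\delta}{y}<v$, Lemma~\ref{lem:graphs_twophases:not_pareto} applied to $(x_1,y,x_2)$ shows that $y$ is not Pareto-supported, a contradiction. The remaining possibility $\agg{\delta}{y}=v$ is again the collinear degenerate case. Here I would rely on the convention that $S$ orders the extreme supported points, in the same way this convention is implicitly used in Property~\ref{pte:graphs_twophases:valid_upper_bound1}. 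Under that convention no element of $S$ lies strictly between $x_1$ and $x_2$, so they are consecutive in $S$.
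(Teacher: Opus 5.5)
The non-degenerate parts of your argument follow the paper's route. You apply Lemma~\ref{lem:graphs_twophases:not_pareto} with the weight that equalizes $x_3$ and the far endpoint, as in the proof of Property~\ref{pte:graphs_twophases:valid_upper_bound1} that the paper invokes, and you use the convex-hull argument for the second claim. The genuine gap is the collinear case of the first claim, $\mu_\delta(x_3)=v$ with $V_1(x_3)<V_1(x_1)$, which your proposal does not actually handle. Along the line $\{\mu_\delta=v\}$ the order is $x_3,x_1,x_2$, so $x_3$ does not lie between $x_1$ and $x_2$; it is $x_1$ that lies strictly between $x_3$ and $x_2$. Under the convention you invoke (every supported point on a supporting line belongs to $S$) there is then nothing to contradict: the pair $(x_1,x_2)$ is arbitrary, and $x_3\in S$ merely precedes $x_1$.

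In fact the claim is false under that convention. Take objective vectors $(0,3)$, $(1,2)$, $(2,1)$; all three maximize $\mu_1$. Then $x_1=(1,2)$, $x_2=(2,1)$, $\delta=1$, $x_3=(0,3)$ gives a maximizer Pareto-different from both with $V_1(x_3)=0\notin\,]1;2[$. Moreover, for the second claim you switch to the opposite convention ($S$ made of extreme supported points), so the proposal rests on two incompatible definitions of $S$. The second claim also fails under your first convention: same three points, $x_1=(0,3)$, $x_2=(2,1)$, $x_3=(0,3)$.

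The remedy is to fix the extreme-point reading of $S$ throughout. This is the reading the paper itself uses in the Dantzig--Wolfe section, and the one under which the corollary is true. The collinear case then closes in one line. Because $\max_x\mu_\delta(x)=v$, the line $\{\mu_\delta=v\}$ supports the convex hull of the objective vectors. The point $x_1$ lies in the relative interior of the segment $[x_3,x_2]$ of that face, so $x_1$ is not an extreme point, contradicting $x_1\in S$. Symmetrically, $V_1(x_3)>V_1(x_2)$ makes $x_2$ interior to $[x_1,x_3]$.

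The paper's own proof silently skips this case, since Lemma~\ref{lem:graphs_twophases:not_pareto} needs a strict inequality, so you were right to flag it; it just needs this argument rather than the one you gave. One further small slip: the bound $V_2(x_3)<V_2(x_1)$ is not symmetric to the other one. Assuming $V_2(x_3)\geq V_2(x_1)$ makes $x_3$ strongly dominate $x_1$, which contradicts the Pareto-optimality of $x_1$, not the maximality of $x_3$.
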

\begin{proof}
    The proof of \textbf{Property \ref{pte:graphs_twophases:valid_upper_bound1}} indicates that if $x_{3}\paretoneq{} x_{1}$ and $x_{3}\paretoneq{} x_{2}$, then $\arcvalobj{1}{x_{3}} \in ]\arcvalobj{1}{x_{1}};\arcvalobj{1}{x_{2}}[$ and $\arcvalobj{2}{x_{3}} \in ]\arcvalobj{2}{x_{2}};\arcvalobj{2}{x_{1}}[$.
    Otherwise, either $x_{1}$ or $x_{2}$ cannot be Pareto-supported, which yields a contradiction.

    In the case $x_{3}\paretoeq{} x_{1}$ or $x_{3}\paretoeq{} x_{2}$, then clearly there cannot be any Pareto-supported solution $x_{4}$ with $\arcvalobj{1}{x_{4}} \in ]\arcvalobj{1}{x_{1}};\arcvalobj{1}{x_{2}}[$ as it cannot be part of the solutions' convex hull in the objective space.
\end{proof}
For a solution $x$ of $(BOR)$, provided we know the value $\arcvalobj{2}{x}$,  Property~\ref{pte:graphs_twophases:valid_upper_bound1} gives an upper bound {of  $\arcvalobj{1}{x}$ for each pair of consecutive solutions in $S$.}
As the optimal solution $x^{*}$ of $(P)$ is also a feasible solution of $(BOR)$, it also applies to $x^{*}$.
As we do not know the value $\arcvalobj{2}{x^{*}}$, we cannot use this property directly to bound $\arcvalobj{1}{x^{*}}$.
However, we know that $\arcvalobj{2}{x^{*}}\in [\beta;\alpha]$ from the main additional constraint~\eqref{main_additional_constraint}.

\vspace{2mm}
\begin{pte}
    \label{pte:graphs_twophases:valid_upper_bound2}
    Let $(x_{1},x_{2})$ be a pair of consecutive solutions in $S$.
    Let $\delta$ be such that $\agg{\delta}{x_{1}}=\agg{\delta}{x_{2}}$.
    Let {$\bsupvalobj{1}{x_{1}}{\delta}={\agg{\delta}{x_{1}}-\delta \cdot \beta}$}.
    Then $\bsupvalobj{1}{x_{1}}{\delta}$ is an upper bound for the value of the optimal solution $\arcvalobj{1}{x^{*}}$ of $(P)$.
\end{pte}
\begin{proof}
    As $x^{*}$ is a feasible solution of $(P)$, it is also a feasible solution of $(BOR)$.
    From \textbf{Property \ref{pte:graphs_twophases:valid_upper_bound1}}, we know that {$\arcvalobj{1}{x^{*}}\leq {\agg{\delta}{x_{1}}-\delta \cdot \arcvalobj{2}{x^{*}}}$}.
    As $x^{*}$ is a feasible solution of $(P)$, then $\arcvalobj{2}{x^{*}}\in [\beta;\alpha]$, meaning that $\arcvalobj{2}{x^{*}}\geq \beta$.
    Hence, we can deduce {$\arcvalobj{1}{x^{*}}\leq {\agg{\delta}{x_{1}}-\delta \cdot \arcvalobj{2}{x^{*}}}\leq {\agg{\delta}{x_{1}}-\delta \cdot \beta}$}.
\end{proof}

{Note that in the objective space $(V_1, V_2)$, the bound $\bsupvalobj{1}{x_{1}}{\delta}$ corresponds to the value on the first objective of the point located at the intersection of  line $V_2 = \beta$ and the line going through $x_{1}$ and $x_2$.}

\textbf{Property \ref{pte:graphs_twophases:valid_upper_bound2}} provides an upper bound that can be computed independently from the values $\arcvalobj{1}{x^{*}}$ and $\arcvalobj{2}{x^{*}}$.

\vspace{2mm}
\begin{pte}
    \label{pte:graphs_twophases:best_R}
    Let $(x_{1},x_{2})$ be a pair of consecutive solutions in $S$.
    Let $\delta$ be such that $\agg{\delta}{x_{1}}=\agg{\delta}{x_{2}}$.
    For any $R\leq\beta$, {$\bsupvalobj{1}{x_{1}}{\delta}\leq {\agg{\delta}{x_{1}}-\delta\cdot R}$}.
    For any $R>\beta$, ${\agg{\delta}{x_{1}}-\delta \cdot R}$ is not a valid upper bound for the value of the optimal solution $\arcvalobj{1}{x^{*}}$ of $(P)$
\end{pte}
\begin{proof}
    Clearly, for any $R\leq\beta$, {${\agg{\delta}{x_{1}}-\delta\cdot \beta}\leq {\agg{\delta}{x_{1}}-\delta \cdot R}$}.
    Consider the case $R>\beta$.
    As $\arcvalobj{2}{x^{*}}\geq \beta$, there is no guarantee that $R \leq \arcvalobj{2}{x^{*}}$.
    Hence, it is not possible to deduce $\arcvalobj{1}{x^{*}}\leq {\agg{\delta}{x_{1}}-\delta\cdot R}$ as done in \textbf{Property \ref{pte:graphs_twophases:valid_upper_bound2}}.
    The following \textbf{Example \ref{ex:graphs_twophases:counter_example}} gives a counter-example to complete this proof in the case $R>\beta$.
\end{proof}

Property \ref{pte:graphs_twophases:best_R} shows that {for any choice of $R\neq \beta$}, the bound ${\agg{\delta}{x_{1}}-\delta\cdot R}$ would either  be invalid or looser than $\bsupvalobj{1}{x_{1}}{\delta}$.

\vspace{2mm}
\begin{example}
    \label{ex:graphs_twophases:counter_example}
    
    Consider an instance of problem $(P)$ with no additional constraints besides the main one, with bounds $\beta=3$ and $\alpha =5$.
    Consider this problem to have three distinct solutions denoted $x_{1}$, $x_{2}$ and $x_{3}$ with the following values: $(7,1), (5-\epsilon,3)$ where $5 - \epsilon < 4$ and, $(2,6)$, respectively. 
    Note that we are in the case LID.
    Indeed, the optimal solution $x_{SP}^{*}$ of the sub-problem $(SP)$ is $x_{1}$, as $\arcvalobj{1}{x_{1}}>\arcvalobj{1}{x_{2}}>\arcvalobj{1}{x_{3}}$ and $\arcvalobj{2}{x_{1}}=1<\beta=3$.
    
    Consider now the bi-objective relaxation problem $(BOR)$ associated with $(P)$.
    All the following explanations are illustrated in Figure \ref{fig:counter_example}.
    
    Clearly, solutions $x_{1}$ and $x_{3}$ are Pareto-supported, but not $x_{2}$.
    Hence, the list of Pareto-supported solutions $S$ is $\{x_{3}, x_{1}\}$.
    Due to bounds $\alpha$ and $\beta$, only $x_2$ is a feasible solution of $(P)$, meaning $x_2$ is the optimal one.

    Consider  $\delta=1$, so that $\agg{\delta}{x_{1}}=\agg{\delta}{x_{3}}$.
    Let $R$ be any number such that $R>\beta$.
    For any value $R$, ${\agg{\delta}{x_{1}}-\delta\cdot R}={1 \cdot 7 + 1 \cdot 1 - 1 \cdot R} = 8-R$.
    As $R>\beta=3$, we deduce $8-R<5$.
    For this instance, one can select $\epsilon< R-5$ such that $\arcvalobj{1}{x_{2}} = 5-\epsilon>8 - R$.
    Consequently, if $R>\beta$, one cannot deduce a valid upper bound for the value $\arcvalobj{1}{x^{*}}$ of the optimal solution of $(P)$.
\end{example}

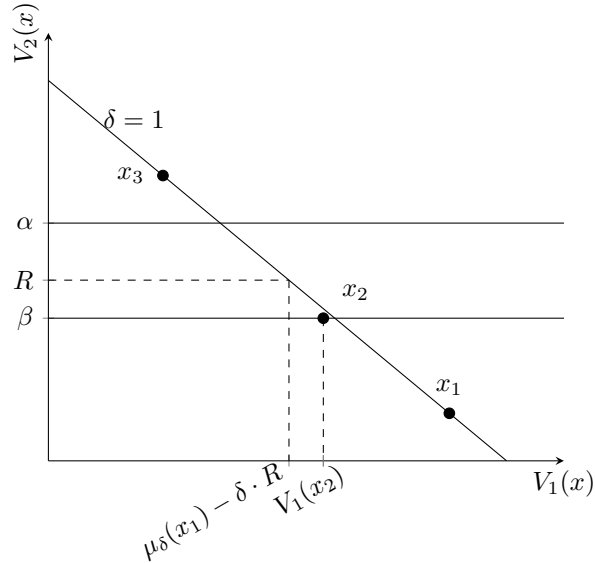
\begin{figure}[htb!]
    \centering
    
\begin{tikzpicture}
\begin{axis}[
axis lines = left,
xlabel = $V_{1}(x)$,
ylabel = $V_{2}(x)$,
x label style={at={(axis description cs:1,0)},anchor=north},
y label style={at={(axis description cs:0,1)}},
ytick={3, 3.8, 5},
yticklabels={$\beta$, $R$, $\alpha$},
x tick label style={rotate=30, anchor=east},
xtick={4.2, 4.8},
xticklabels={{{${\agg{\delta}{x_{1}}-\delta\cdot R}$}},},
extra x ticks={4.8},
extra x tick labels = {{$V_{1}(x_{2})$},},
extra x tick style={
    xticklabel style={xshift=3mm, yshift=-2mm}
    },
xmin=0,
ymin=0,
xmax=9,
ymax=9
]

\addplot[
    only marks
    ]
    coordinates {
    (7, 1) (4.8, 3) (2, 6)
    };
\node[label=above:{$x_{1}$}] at (axis cs:7, 1) {};
\node[label=above right:{$x_{2}$}] at (axis cs:4.8, 3) {};
\node[label=left:{$x_{3}$}] at (axis cs:2, 6) {};

\addplot[] coordinates {(0,8) (8,0)} node[right, pos=0.1] {$\delta=1$};
\draw [] (axis cs:0,3) -- (axis cs:10,3);
\draw [] (axis cs:0,5) -- (axis cs:10,5);

\draw[dashed] (axis cs:0,3.8) -- (axis cs:4.2, 3.8);
\draw[dashed] (axis cs:4.2,3.8) -- (axis cs:4.2, 0);
\draw[dashed] (axis cs:4.8,3) -- (axis cs:4.8, 0);

\end{axis}
\end{tikzpicture}
    \caption{Illustration of Example \ref{ex:graphs_twophases:counter_example}}
    \label{fig:counter_example}
\end{figure}

From the previous \textbf{Properties \ref{pte:graphs_twophases:valid_upper_bound2}} and \textbf{\ref{pte:graphs_twophases:best_R}}, we know that for a given pair $(x_{1},x_{2})$ of consecutive solutions in $S$, there is no $R\neq \beta$ providing a tighter upper bound  than $\bsupvalobj{1}{x_{1}}{\delta}$.
Recall that in general, the number of Pareto-supported solutions is exponential, hence there exists an exponential number of bounds.
We prove in the following that it is possible to characterize the pair $(x_{1},x_{2})$ of consecutive solutions in $S$ that provides the tightest bound $\bsupvalobj{1}{x_{1}}{\delta}$.
\begin{dfn}
    A pair $(x_{1},\delta) \in S \times \mathbb{R}^+$ is a \textit{Pareto-supported pair} if there exists solution $x_2 \in S$ consecutive to $x_1$ in $S$ satisfying $\agg{\delta}{x_{1}}=\agg{\delta}{x_{2}}$.
\end{dfn}
\begin{thm}
    \label{thm:graphs_twophases:best_upper_bound}
    Let $(x_{1},x_{2})$ be a pair of consecutive solutions in $S$ such that $\arcvalobj{2}{x_{1}}\geq \beta>\arcvalobj{2}{x_{2}}$.
    Let $\delta$ be such that $\agg{\delta}{x_{1}}=\agg{\delta}{x_{2}}$.
    Among all Pareto-supported pairs, solution $x_{1}$ and value $\delta$ minimize the value $\bsupvalobj{1}{x_{1}}{\delta}$.
\end{thm}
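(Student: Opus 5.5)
We need to show that for any Pareto-supported pair $(y_1,\delta')$ with consecutive partner $y_2$, $\agg{\delta'}{y_1}-\delta'\beta \geq \agg{\delta}{x_1}-\delta\beta$. Geometrically, the upper boundary of the convex hull of the points of $S$ in the objective space is a concave piecewise-linear curve $V_1=f(V_2)$. The bound attached to a pair $(y_1,\delta')$ is the value at $V_2=\beta$ of the line through $y_1,y_2$, i.e.\ the supporting line of slope $-\delta'$. The value of a concave function at $\beta$ is the minimum over all its supporting lines evaluated at $\beta$. This minimum is attained by the line of the segment containing $\beta$, which is exactly the segment $[x_2,x_1]$ since $\arcvalobj{2}{x_{1}}\geq \beta>\arcvalobj{2}{x_{2}}$.

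To make this rigorous using the earlier results, I fix an arbitrary Pareto-supported pair $(y_1,\delta')$. Property~\ref{pte:graphs_twophases:valid_upper_bound1} gives $\agg{\delta'}{x}\leq\agg{\delta'}{y_1}$ for every solution $x$ of $(BOR)$, in particular for $x=x_1$ and $x=x_2$. So it suffices to find a convex combination of $x_1,x_2$ in objective space whose second coordinate equals $\beta$.

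Set $\lambda=(\beta-\arcvalobj{2}{x_2})/(\arcvalobj{2}{x_1}-\arcvalobj{2}{x_2})\in(0,1]$; the denominator is positive by the hypothesis. Define $v=\lambda\arcvalobj{1}{x_1}+(1-\lambda)\arcvalobj{1}{x_2}$. Since $\agg{\delta}{x_1}=\agg{\delta}{x_2}$, the point $(v,\beta)$ lies on the line of slope $-\delta$ through $x_1,x_2$, so $v+\delta\beta=\agg{\delta}{x_1}$, i.e.\ $v=\bsupvalobj{1}{x_{1}}{\delta}$. By linearity, $v+\delta'\beta=\lambda\agg{\delta'}{x_1}+(1-\lambda)\agg{\delta'}{x_2}\leq \agg{\delta'}{y_1}$. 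Hence $\bsupvalobj{1}{x_{1}}{\delta}=v\leq \agg{\delta'}{y_1}-\delta'\beta=\bsupvalobj{1}{y_{1}}{\delta'}$, which proves the claim.

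The main obstacle is the hypothesis bookkeeping. I must check that Property~\ref{pte:graphs_twophases:valid_upper_bound1} indeed applies to every Pareto-supported pair, which holds because such a pair is by definition built from consecutive elements of $S$. I also need $x_1,x_2$ to be solutions of $(BOR)$, so that the property's inequality holds for them, and the ordering must correctly put $\arcvalobj{2}{x_1}>\arcvalobj{2}{x_2}$ so that $\lambda$ is well defined. Finally, I should remark that $\delta\geq 0$, which holds since non-dominated consecutive points have opposite orderings in the two objectives.
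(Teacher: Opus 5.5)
Your proof is correct and follows essentially the same route as the paper. The paper also applies Property~\ref{pte:graphs_twophases:valid_upper_bound1} at the competing weight $\delta'$ to $x_1$ and $x_2$, rewrites using $\mu_{\delta}(x_1)=\mu_{\delta}(x_2)$, and concludes from $V_2(x_2)<\beta\le V_2(x_1)$; the only difference is that it argues by contradiction and leaves the sign case analysis on $\delta'-\delta$ implicit, whereas your convex combination with weight $\lambda$ combines the two inequalities directly and makes that step explicit.
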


The proof of \textbf{Theorem \ref{thm:graphs_twophases:best_upper_bound}} is in \textbf{Appendix \ref{anx:graphs_twophases:best_upper_bound}}.

\subsection{Algorithm of Phase I}
\label{sec:exfirstphase}

The aim of this first phase is to define upper bounds, thus defining a reduced search space.
More precisely, we search for a Pareto-supported pair ($x,\delta)$, that provides the tightest bound $\bsupvalobj{}{x}{\delta}$ for the value of the optimal solution $x^{*}$ of $(P)$, according to the result provided in Theorem \ref{thm:graphs_twophases:best_upper_bound}.

The first step is to compute $x_{SP}^{*}$ the optimal solution of $(SP)$. 
If $\arcvalobj{2}{x_{SP}^{*}} \in [ \beta, \alpha ]$, the algorithm stops and returns $x_{SP}^{*}$ and $\delta=0$. Note that the associated upper bound is then $V_1(x_{SP}^{*})$.
Otherwise, one needs to find
$(x_{A},x_{B})$ a pair of consecutive solutions in $S$ such that $\arcvalobj{2}{x_{A}}\geq \beta>\arcvalobj{2}{x_{B}}$.

The idea to obtain this pair of solutions is to do an enumeration of Pareto-supported solutions following the dichotomic principle.

The steps of this enumeration are illustrated in Figure \ref{fig:first_phase}, where $\delta_{i,j}$ is the value such that $\agg{\delta_{i,j}}{x_{i}} = \agg{\delta_{i,j}}{x_{j}}$, for $(i,j)=(1,2), (3,2), (4,2), (4,5)$.

We start with two Pareto-supported solutions $x_{A}$ and $x_{B}$, not necessarily successive in $S$, such that $\arcvalobj{2}{x_{A}}\geq \beta>\arcvalobj{2}{x_{B}}$ as represented in Figure \ref{fig:first_phase_a} with solutions $x_1$ and $x_2$.
Let $\delta$ be such that $\agg{\delta}{x_{A}} = \agg{\delta}{x_{B}}$. From there, we distinguish two cases.

The first case occurs when the solution $x_{C}$ obtained by maximizing $\agg{\delta}{}$ is such that $x_{C}\paretoeq{}x_{A}$ or $x_{C} \paretoeq{} x_{B}$.
Then, we know from Corollary \ref{crl:graphs_twophases:valid_upper_bound1}, that $x_{A}$ and $x_{B}$ are successive in $S$.
As such, we can return $x_{A}$, $x_B$ and $\delta$, as  $(x_{A},x_{B})$ is a pair of consecutive Pareto-supported solutions in $S$ such that $\arcvalobj{2}{x_{A}}\geq \beta>\arcvalobj{2}{x_{B}}$.

In the second case, solution $x_{C}$ maximizing $\agg{\delta}{}$ is such that $x_{C} \paretoneq{} x_{A}$ and $x_{C} \paretoneq{} x_{B}$.
Then, we know from Corollary \ref{crl:graphs_twophases:valid_upper_bound1}, that  $\arcvalobj{1}{x_{C}} \in ]\arcvalobj{1}{x_{A}};\arcvalobj{1}{x_{B}}[$ and $\arcvalobj{2}{x_{C}} \in ]\arcvalobj{2}{x_{B}};\arcvalobj{2}{x_{A}}[$.
In other words, $x_{C}$ is a Pareto-supported solution between $x_{A}$ and $x_{B}$ in $S$.
If $\arcvalobj{2}{x_{C}}\geq \beta$, then one can consider the two solutions $x_{C}$ and $x_{B}$ rather than $x_{A}$ and $x_{B}$ as shown in Figure \ref{fig:first_phase_b}.
Otherwise, one can consider the two solutions $x_{A}$ and $x_{C}$ rather than $x_{A}$ and $x_{B}$.

The idea is to use this new pair, $x_{A}$ and $x_{C}$ (resp. $x_{B}$ and $x_{C}$), and reuse the result of Corollary \ref{crl:graphs_twophases:valid_upper_bound1} either to prove both solutions to be successive in $S$, or to find a new solution $x_{4}$.
Figure $\ref{fig:first_phase_c}$ and $\ref{fig:first_phase_d}$ show an example of the solutions $x_{4}$ and $x_{5}$ thus obtained.

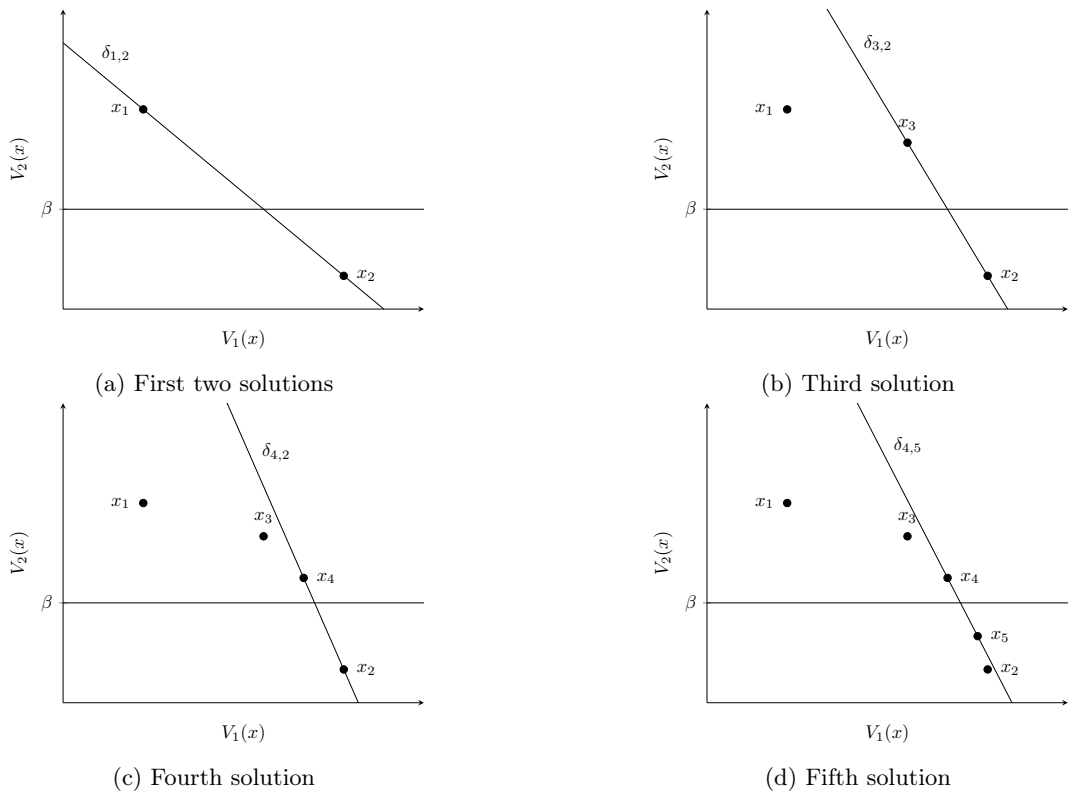
\begin{figure}[htb!]
    \centering
    \begin{subfigure}{0.48\textwidth}
    \centering
   \begin{tikzpicture}[scale=0.7]
\begin{axis}[
axis lines = left,
xlabel = $V_{1}(x)$,
ylabel = $V_{2}(x)$,
x label style={at={(axis description cs:0.5,-0.05)},anchor=north},
ytick={3},
yticklabels={$\beta$},
x tick label style={rotate=30, anchor=east},
xtick=\empty,
xmin=0,
ymin=0,
xmax=9,
ymax=9
]

\addplot[
    only marks
    ]
    coordinates {
    (7, 1) (2, 6)
    };
\node[label=left:{$x_{1}$}] at (axis cs:2, 6) {};
\node[label=right:{$x_{2}$}] at (axis cs:7, 1) {};

\addplot[] coordinates {(0,8) (8,0)} node[above right, pos=0.1] {$\delta_{1,2}$};
\draw [] (axis cs:0,3) -- (axis cs:10,3);

\end{axis}
\end{tikzpicture}
    \caption{First two solutions}
    \label{fig:first_phase_a}
    \end{subfigure}
    \begin{subfigure}{0.48\textwidth}
    \centering
    \begin{tikzpicture}[scale=0.7]
\begin{axis}[
axis lines = left,
xlabel = $V_{1}(x)$,
ylabel = $V_{2}(x)$,
x label style={at={(axis description cs:0.5,-0.05)},anchor=north},
ytick={3},
yticklabels={$\beta$},
x tick label style={rotate=30, anchor=east},
xtick=\empty,
xmin=0,
ymin=0,
xmax=9,
ymax=9
]

\addplot[
    only marks
    ]
    coordinates {
    (2, 6) (5,5) (7, 1)
    };
\node[label=left:{$x_{1}$}] at (axis cs:2, 6) {};
\node[label=right:{$x_{2}$}] at (axis cs:7, 1) {};
\node[label=above:{$x_{3}$}] at (axis cs:5, 5) {};

\addplot[] coordinates {(0,15) (7.5,0)} node[above right, pos=0.5] {$\delta_{3,2}$};
\draw [] (axis cs:0,3) -- (axis cs:10,3);

\end{axis}
\end{tikzpicture}
    \caption{Third solution}
    \label{fig:first_phase_b}
    \end{subfigure}
    
    \begin{subfigure}{0.48\textwidth}
    \centering
   \begin{tikzpicture}[scale=0.7]
\begin{axis}[
axis lines = left,
xlabel = $V_{1}(x)$,
ylabel = $V_{2}(x)$,
x label style={at={(axis description cs:0.5,-0.05)},anchor=north},
ytick={3},
yticklabels={$\beta$},
x tick label style={rotate=30, anchor=east},
xtick=\empty,
xmin=0,
ymin=0,
xmax=9,
ymax=9
]

\addplot[
    only marks
    ]
    coordinates {
    (2, 6) (5,5) (6,3.75) (7, 1)
    };
\node[label=left:{$x_{1}$}] at (axis cs:2, 6) {};
\node[label=right:{$x_{2}$}] at (axis cs:7, 1) {};
\node[label=above:{$x_{3}$}] at (axis cs:5, 5) {};
\node[label=right:{$x_{4}$}] at (axis cs:6, 3.75) {};

\addplot[] coordinates {(0,20.25) (8,-1.75)} node[above right, pos=0.6] {$\delta_{4,2}$};
\draw [] (axis cs:0,3) -- (axis cs:10,3);

\end{axis}
\end{tikzpicture}
    \caption{Fourth solution}
    \label{fig:first_phase_c}
    \end{subfigure}
    \begin{subfigure}{0.48\textwidth}
    \centering
    \begin{tikzpicture}[scale=0.7]
\begin{axis}[
axis lines = left,
xlabel = $V_{1}(x)$,
ylabel = $V_{2}(x)$,
x label style={at={(axis description cs:0.5,-0.05)},anchor=north},
ytick={3},
yticklabels={$\beta$},
x tick label style={rotate=30, anchor=east},
xtick=\empty,
xmin=0,
ymin=0,
xmax=9,
ymax=9
]

\addplot[
    only marks
    ]
    coordinates {
    (2, 6) (5,5) (6,3.75) (6.75,2) (7, 1)
    };
\node[label=left:{$x_{1}$}] at (axis cs:2, 6) {};
\node[label=right:{$x_{2}$}] at (axis cs:7, 1) {};
\node[label=above:{$x_{3}$}] at (axis cs:5, 5) {};
\node[label=right:{$x_{4}$}] at (axis cs:6, 3.75) {};
\node[label=right:{$x_{5}$}] at (axis cs:6.75, 2) {};

\addplot[] coordinates {(0,17.75) (9,-3.25)} node[above right, pos=0.5] {$\delta_{4,5}$};
\draw [] (axis cs:0,3) -- (axis cs:10,3);

\end{axis}
\end{tikzpicture}
    \caption{Fifth solution}
    \label{fig:first_phase_d}
    \end{subfigure}
    \caption{Illustrations of the dichotomic enumeration of the first phase}
    \label{fig:first_phase}
\end{figure}

Algorithm \ref{alg:graphs_twophases:first_phase} gives a pseudo-code for this first phase.
Lines 4 and 6 define the first two solutions.
Here the solutions maximizing respectively the first and the second objective are selected.
In the case where the solution maximizing the second objective has the second objective smaller than the lower bound $\beta$, then clearly no solution of $(P)$ will satisfy this bound, hence the algorithm stops (line 7 to 9).
Lines 11 to 18 are the while loop to enumerate until the new solution is Pareto-equal to the two incumbent solutions.
The updated $\delta$ value used to compute the new solution is at line 17, and the new solution is obtained at line 18.
At line 19 the algorithm returns one of the two incumbent solutions as well as the value $\delta$ used in the while loop.

\begin{algorithm}[!htb]
\caption{BORWin: first phase}\label{alg:graphs_twophases:first_phase}
\begin{algorithmic}[1]\onehalfspacing
\Require A MIP of the form of $(P)$
\State Compute $x_{SP}^{*}$ the optimal solution of $(SP)$
\If{$\arcvalobj{2}{x_{SP}^{*}}\in [\beta;\alpha]$}
    \State \Return $x_{SP}^{*}$, $\delta=0$
\EndIf
\State $x_{A}=x_{SP}^{*}$
\State $\delta \gets$ $+\infty$
\State $x_{B} \gets$ solution of $(BOR)$ maximizing $\agg{\delta}{}$
\If{$\arcvalobj{2}{x_{B}}<\beta$}
    \State No feasible solution to $(P)$
    \State \Return Infeasible
\EndIf
\State $x_{C}\gets \emptyset$
\While{$x_{C} \paretoneq{} x_{A}$ and $x_{C} \paretoneq{} x_{B}$}
    \If{$x_{C}\neq \emptyset$}
        \If{$\arcvalobj{2}{x_{C}} \geq \beta$}
            \State $x_{B}\gets x_{C}$
        \Else
            \State $x_{A} \gets x_{C}$
        \EndIf
    \EndIf
    \State $\delta\gets$$ \frac{\arcvalobj{1}{x_{A}}-\arcvalobj{1}{x_{B}}}{\arcvalobj{2}{x_{B}}-\arcvalobj{2}{x_{A}}}$ 
    \State $x_{C} \gets $ solution of $(BOR)$ maximizing $\agg{\delta}$
\EndWhile
\State \Return $x_{A}$, $x_{B}$, $\delta$
\end{algorithmic}
\end{algorithm}

With solutions $x_A$, $x_B$ and {value} $\delta$ obtained from \textbf{Algorithm \ref{alg:graphs_twophases:first_phase}}, we can define a search space for the second phase. To do so, we use four valid bounds for the optimal solution $x^{*}$ of $P$.

\begin{dfn}[Search space $\Omega$]
Search space $\Omega$ is defined as the set:
$$
\Omega = \bigg\{ x \in X \; \; \; | \; \; \; \beta \leq \arcvalobj{2}{x}\leq \alpha, \; \; \; \arcvalobj{1}{x} \leq \bsupvalobj{1}{x_A}{\delta}, \; \; \; \agg{\delta}{x} \leq \agg{\delta}{x_A} \bigg\}
$$
    
\end{dfn}

\begin{pte}
    Any optimal solution $x^*$ of $(P)$ belongs to $\Omega$.
\end{pte}
    
\begin{proof}
    Clearly, $x^{*}$ is a valid solution of $P$, hence $\beta \leq \arcvalobj{2}{x^{*}}\leq \alpha$.
Then, as shown in \textbf{Property \ref{pte:graphs_twophases:valid_upper_bound2}}, $\arcvalobj{1}{x^{*}} \leq \bsupvalobj{1}{x}{\delta}$.
Finally, $\agg{\delta}{x^{*}} \leq \agg{\delta}{x}$, otherwise there would be a contradiction with the definition of \textbf{Algorithm \ref{alg:graphs_twophases:first_phase}}.

\end{proof}

The following example illustrates how search space $\Omega$ can be derived from the result of \textbf{Algorithm \ref{alg:graphs_twophases:first_phase}}.

\begin{example}
Consider {two} instances of problem $(P)$, each with five solutions, $x_{1}, \ldots, x_{5}$.
For each instance, solution $x_{4}$ is with the highest value $V_1$.
The first instance is such that $\arcvalobj{2}{x_{4}} < \beta$, while the second instance is such that $\arcvalobj{2}{x_{4}} \in [\beta; \alpha]$.
\textbf{Figure \ref{fig:obj_space_1}} (resp. \textbf{\ref{fig:obj_space_2}})  shows search space $\Omega$ {(in the bi-objective space)} in grey, for the first (resp. second)  instance.
    
\end{example}

\begin{figure}[htb!]
    \centering
    \begin{subfigure}{0.48\textwidth}
    \centering
    \includegraphics[width=\linewidth]{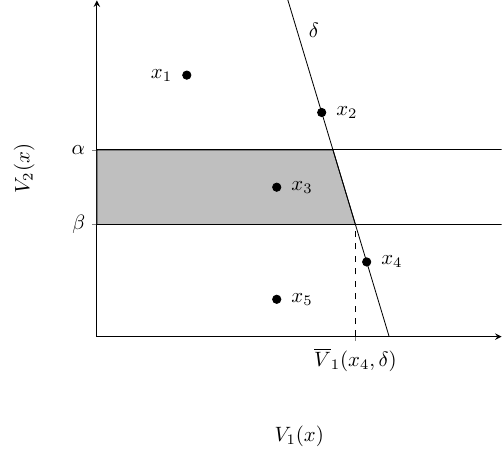}
    \caption{Instance with $\arcvalobj{2}{x_{4}} < \beta$}
    \label{fig:obj_space_1}
    \end{subfigure}
    \begin{subfigure}{0.48\textwidth}
    \centering
    \includegraphics[width=\linewidth]{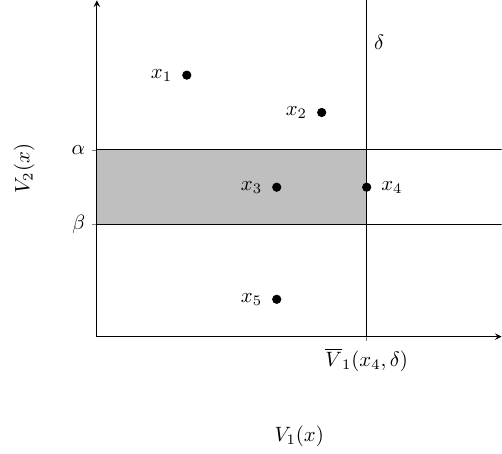}
    \caption{Instance with $\arcvalobj{2}{x_{4}} = \beta$}
    \label{fig:obj_space_2}
    \end{subfigure}

    \caption{Representation of search space $\Omega$ along with the solutions of $(BOR)$ in the objective space}

    \label{fig:obj_space}
\end{figure}

\subsection{Application to the AWCLPP}
\label{sec:phase1AWCLPP}

In this section, we present an illustrative application of Phase I using \textbf{Algorithm \ref{alg:graphs_twophases:first_phase}} to the AWCLPP.

Consider the AWCLPP instance described in \textbf{Example \ref{ex:WCLPP}}, \textbf{Figure~\ref{fig:WCLPP}} and \textbf{Table~\ref{tab:paths}}. At line 1 of \textbf{Algorithm \ref{alg:graphs_twophases:first_phase}}, the longest path according to $V_1$ (arc values $V$ in the AWCLPP) gives $\pi_4=(s,1,3,p)$ with $V_1(\pi_4)=V(\pi_4)=33$. This path is infeasible by default (LID case) since $V_2(\pi_4)=R(\pi_4)=16<\beta=20$, so $x_A=\pi_4$ (line 4). Then, at line 6, the computation of the longest path according to $V_2$ (resource $R$ in the AWCLPP) gives path $x_B=\pi_5=(s,3,2,p)$ with $V_2(\pi_5)=R(\pi_5)=40$, which is greater than $\beta$ so that no unfeasibility is detected and we enter the main loop. At line 17, we obtain $\delta=\frac{33-26}{40-16}=\frac{7}{24}$, which is illustrated in \textbf{Figure~\ref{fig:figure_phase_one_1}}.
For each arc $a$, we compute $\mu_\delta(a)=V_1(a)+\delta V_2(a)$, which gives the graph, displayed in \textbf{Figure~\ref{fig:example_awclpp_phase_1_step_1}}.

\begin{figure}
    \centering   \includegraphics[width=0.8\textwidth]{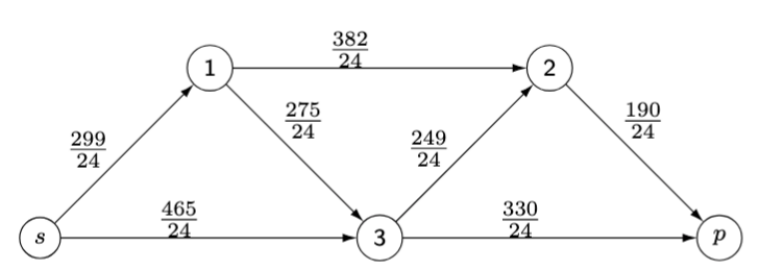}
    \caption{Graph for maximization of $\mu_\delta$ with $\delta=\frac{7}{24}$ before the first iteration}
    \label{fig:example_awclpp_phase_1_step_1}
\end{figure}

The longest path in this graph is the solution of $(BOR)$ that maximizes $\mu_\delta()$ (line 18). We obtain path $x_C=\pi_1=(s,1,3,2,p)$ which is  pareto-equal to neither $x_A$ nor $x_B$ (condition at line 11). Since $V_2(\pi_1)=35>\beta$ we set $x_B=\pi_1$ (line 14) and $\delta=\frac{33-32}{35-16}=\frac{1}{19}$, as illustrated in \textbf{Figure~\ref{fig:figure_phase_one_2}}. The solution of $(BOR)$ that maximizes $\mu_\delta()$ is computed again by computing the longest path in the graph displayed in \textbf{Figure~\ref{fig:example_awclpp_phase_1_step_2}}. There are two solutions that maximize this objective, namely $\pi_1=x_B$ and $\pi_4=x_A$ with $\mu_\delta(\pi_1)=\mu_\delta(\pi_4)=\frac{643}{19}$. Setting $x_C$ to one of these paths, the stop condition at line 11 is satisfied. The algorithm returns $x_A=\pi_4$, $x_B=\pi_1$ and $\delta=\frac{1}{19}$. We obtain $\overline{V}_1(\pi_4)=\frac{643}{19}-\frac{20}{19}\approx32.7$, which gives an upper bound of 32, improving the longest path upper bound ($V_1(\pi_4)=33$).

\begin{figure}
    \centering   \includegraphics[width=0.8\textwidth]{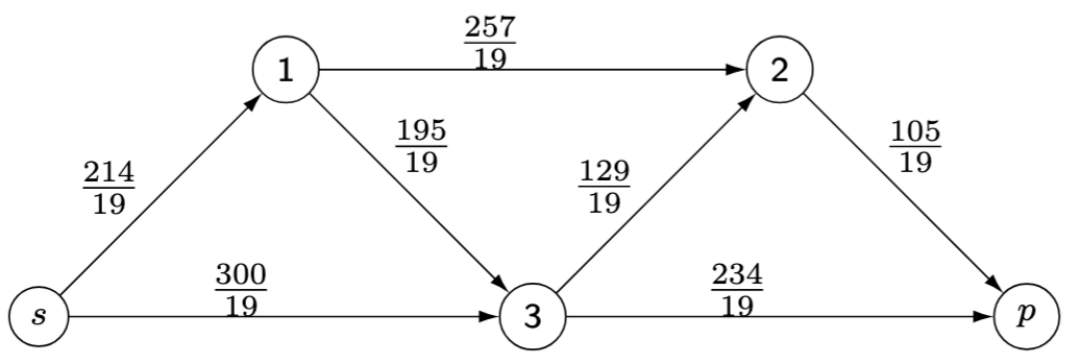}
    \caption{Graph for maximization of $\mu_\delta$ with $\delta=\frac{1}{19}$ after the first iteration}
    \label{fig:example_awclpp_phase_1_step_2}
\end{figure}

\begin{figure}[htb!]  
    \centering
    \begin{subfigure}{0.49\textwidth}
    \centering
    \includegraphics[width=\linewidth]{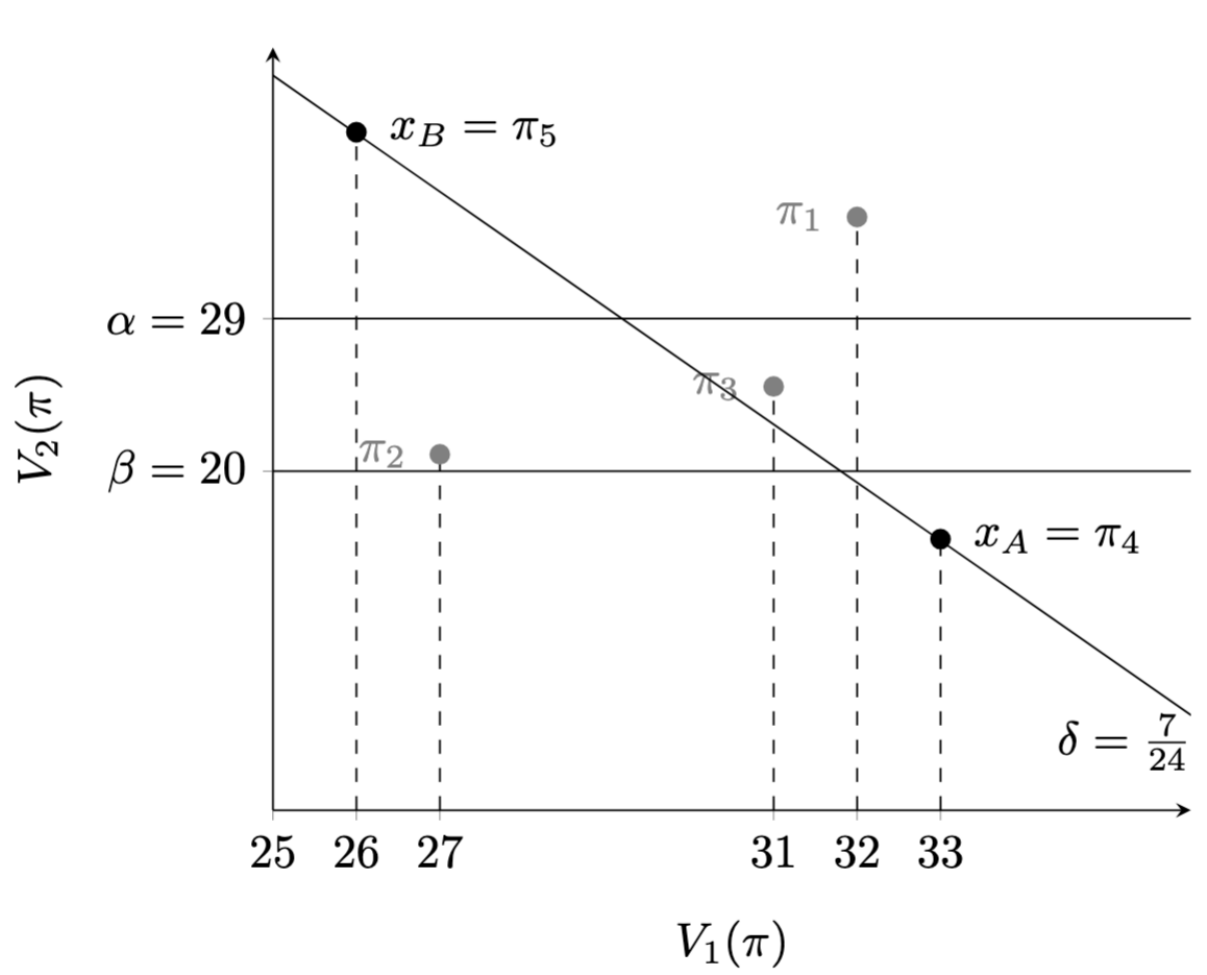}
    \caption{Initial two solutions} 
    \label{fig:figure_phase_one_1}
    \end{subfigure}
    \begin{subfigure}{0.49\textwidth}
    \centering
    \includegraphics[width=\linewidth]{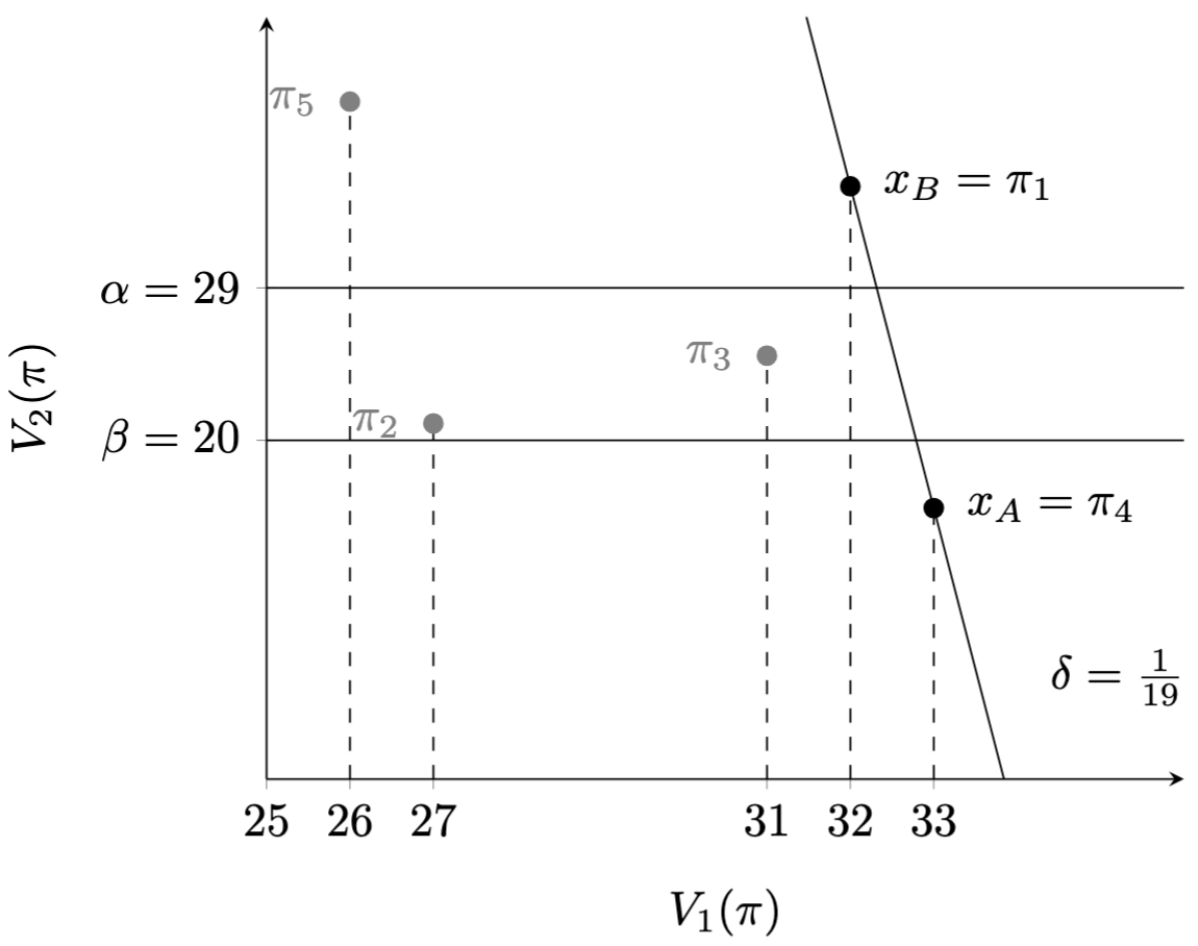}
    \caption{Two (final) solutions after the first iteration}
\label{fig:figure_phase_one_2}    \end{subfigure}
    \caption{Phase I dichotomic enumeration for AWCLPP Example~\ref{ex:WCLPP}}
\end{figure}

\subsection{Relationship to Lagrangian and Dantzig-Wolfe relaxation}

The first phase of the BORWin algorithm is described in Section \ref{sec:exfirstphase} through a bi-objective perspective, but it remains closely linked to the Lagrangian duality theory. In particular, it can be shown to return the optimal solution of the Lagrangian relaxation of $(R)$.

Indeed, suppose we dualize constraint \eqref{borne_min_ressource}, we obtain Lagrangian function $\theta$ defined for $\lambda \geq 0$:
$$
\theta(\lambda) = \max_{x \in X} cx + \lambda (\beta - \gamma x ) 
$$

With $\mbox{Ext}($X$) = \{x_1, ..., x_K\}$ the set of extreme points of $X$, we consider the Lagrangian minimization problem
    \begin{alignat}{3}
   \min_{\lambda \geq 0} \theta(\lambda) =  \quad & \underset{\sigma, \lambda} {\text{min}}
    & & \sigma + \lambda \beta \nonumber \\
    & \text{s. t.} \quad
    & & \sigma \geq (c - \lambda \gamma) x_k & & \quad \forall k \in \{1, ..., K\}\\
    &&& \lambda \geq 0
\end{alignat}
    an optimal solution $\lambda^*=\arg\min_{\mu \geq 0} \theta(\lambda) $
    and its linear programming dual, namely the Dantzig-Wolfe relaxation:
\begin{alignat}{3}
     (DW) \quad & \underset{\xi} {\text{max}}
    & & \sum_{k=1}^K (c x_k) \xi_k \nonumber \\
    & \text{s. t.} \quad
    & & \sum_{k = 1 }^K \xi_k =1 \label{convexite} \\
    &&& \beta \leq  \sum_{k=1}^K \xi_k \gamma x_k \label{ressource_dualisee} \\
    &&& \xi_k \in [0,1] & & \forall k \nonumber
\end{alignat}

First we give a practical lemma on the solution structure of $(DW)$ relying on the fact that only one constraint is dualized.

\begin{lem}
\label{lem:opt_dw}
There exists an optimal solution $\xi^*$ of $(DW)$ such that : $\exists i, j \in \{1, ..., K\}$ verifying
\begin{itemize}
    \item  $\xi^*_k = 0 \quad \forall k \not=i, j$
    \item $\xi_i + \xi_j = 1$
    \item $\xi_i (\gamma x_i) + \xi_j (\gamma x_j) = \beta$
    \item The value $v^* = \xi_i (c x_i) + \xi_j (c x_j) = \theta(\lambda^*)$ is an upper bound on the value of $(P)$
\end{itemize}
\end{lem}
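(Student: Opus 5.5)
Since only one constraint besides the convexity constraint is present, I would argue by the structure of basic solutions of the linear program $(DW)$. $(DW)$ has two constraints apart from the bounds $\xi_k\in[0,1]$, namely the convexity constraint \eqref{convexite} and the dualized resource constraint \eqref{ressource_dualisee}; the bounds $\xi_k\le 1$ are implied by \eqref{convexite} and $\xi\ge 0$, so they can be dropped. We are in the LID case, and we assume feasibility, i.e.\ some extreme point $x_k$ has $\gamma x_k\ge\beta$; otherwise Algorithm \ref{alg:graphs_twophases:first_phase} already declares $(P)$ infeasible. The feasible region is then a nonempty polytope contained in the simplex, so an optimal basic solution $\xi^*$ exists. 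After adding a slack to \eqref{ressource_dualisee}, a basis has size two, so $\xi^*$ has at most two positive $\xi$-components, say $\xi_i,\xi_j$ (possibly $i=j$). This gives the first two items.

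For the third item I would show that \eqref{ressource_dualisee} can be taken tight, and I expect this to be the main obstacle. The plan is an exchange argument using the LID hypothesis. Let $x_{SP}^*$ be an extreme point maximizing $cx$ over $X$, with $\gamma x_{SP}^*<\beta$. Suppose $\sum_k\xi^*_k\gamma x_k>\beta$. Move weight from the current support toward $x_{SP}^*$: set $\xi(t)=(1-t)\xi^*+t\,e_{SP}$ for $t\in[0,1]$. The objective $\sum_k \xi_k(t)\, cx_k$ is nondecreasing in $t$, because $cx_{SP}^*\ge cx_k$ for every $k$. The resource term decreases continuously to $\gamma x_{SP}^*<\beta$, so there is a $t^*$ at which it equals $\beta$. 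The point $\xi(t^*)$ is optimal but may have three nonzero components. I would then re-extract a basic optimal solution on the face where \eqref{ressource_dualisee} is tight: add the equality $\sum_k \xi_k\gamma x_k=\beta$ to the LP. This LP is feasible, has the same optimal value, and still has only two equality constraints, so a basic optimum with at most two nonzeros satisfies all three items. A degenerate $i=j$ would force $\gamma x_i=\beta$, which is harmless.

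The fourth item follows from LP duality. $(DW)$ is the LP dual of the Lagrangian minimization problem written just before the lemma, and both are feasible: $\lambda=0$ with $\sigma=\max_k cx_k$ is feasible for the primal. Strong duality then gives $v^*=\min_{\lambda\ge0}\theta(\lambda)=\theta(\lambda^*)$. For the bound, any feasible $x$ of $(P)$ lies in $X$ and satisfies $\gamma x\ge\beta$, so $\theta(\lambda^*)\ge cx+\lambda^*(\beta-\gamma x)\ge cx$. Here I use that the maximum of a linear function over $X$ equals its maximum over $\mbox{Ext}(X)$, or over $\mathrm{conv}(X)$, which is implicitly assumed for $X$ when the extreme-point formulation is used. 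This proves that $v^*$ is an upper bound on the value of $(P)$.
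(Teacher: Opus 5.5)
Your argument for the first three items is correct, but it takes a different route from the paper. The paper projects to the plane $(v,r)=(cx,\gamma x)$. There the feasible set of $(DW)$ becomes the polygon $H\cap\{r\ge\beta\}$. The LID hypothesis is invoked to place the $v$-maximizer on the line $r=\beta$, and two-dimensionality puts that point on an edge of $H$, i.e.\ a convex combination of the two extreme points spanning that edge.

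You stay in $\xi$-space instead. Counting basic variables of an LP with two equality rows gives at most two nonzeros. Tightness of \eqref{ressource_dualisee} then comes from an explicit exchange toward a $c$-maximizing extreme point, followed by re-extracting a basic optimum with \eqref{ressource_dualisee} imposed as an equality.

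Your version justifies the step the paper only asserts (``because we are in the LID case''), handles ties and degenerate cases, and extends verbatim to several dualized constraints by the same basis counting. The paper's picture, in exchange, exhibits $x_i,x_j$ as endpoints of an edge of $H$, which is what is used right afterwards to put them on the line through $x_A$ and $x_B$. One small point: you should say why a $c$-maximizing \emph{extreme} point with $\gamma x\le\beta$ exists. Take a vertex minimizing $\gamma x$ on the face of $\mathrm{conv}(X)$ where $cx$ is maximal; this face contains $x^*_{SP}$, so that vertex has $\gamma x\le\gamma x^*_{SP}\le\beta$.

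The fourth item does not go through as you wrote it. For a feasible $x$ of $(P)$ and $\lambda^*\ge 0$ you have $\lambda^*(\beta-\gamma x)\le 0$. So the second inequality in $\theta(\lambda^*)\ge cx+\lambda^*(\beta-\gamma x)\ge cx$ points the wrong way.

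Likewise, $(DW)$ is not the LP dual of the master problem as printed. Dualizing $\min\,\sigma+\lambda\beta$ s.t.\ $\sigma\ge(c-\lambda\gamma)x_k$, $\lambda\ge 0$, yields the constraint $\sum_k\xi_k\gamma x_k\le\beta$, not $\ge\beta$. In the LID case that problem has value $\theta(0)=\max_k cx_k$, which in general differs from $v^*$: in Example~\ref{ex:graphs_twophases:counter_example} one gets $7$ against $v^*=5$.

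Both defects come from the sign of the Lagrangian term in the paper's definition of $\theta$, which you adopted literally. The paper's own proof argues only the geometric part and leaves the fourth item to that stated duality.

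The repair is to dualize $\beta-\gamma x\le 0$ with the correct sign, $\theta(\lambda)=\max_{x\in X}cx+\lambda(\gamma x-\beta)$. This is exactly the form of the Phase~I bound in Property~\ref{pte:graphs_twophases:valid_upper_bound2}. The master then reads $\min\,\sigma-\lambda\beta$ s.t.\ $\sigma\ge(c+\lambda\gamma)x_k$, $\lambda\ge 0$, and its LP dual is precisely $(DW)$. Your feasibility witness $\lambda=0$, $\sigma=\max_k cx_k$ still works, and strong duality gives $v^*=\theta(\lambda^*)$. The bound then becomes $\theta(\lambda^*)\ge cx+\lambda^*(\gamma x-\beta)\ge cx$.
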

\begin{proof}
    The optimal solution $\xi^*$ of this relaxation corresponds to a convex combination of extreme points of $X$ that maximizes the value $\sum_{k=1}^K (c x_k) \xi_k $ while ensuring the window lower bound $\beta$.
    
    Since there is only one dualized constraint, we can geometrically represent the solutions of problem $(R)$ in two dimensions, in the space (value $v=cx$, resource $r=\gamma x$).
    
    We consider the convex hull $H$ of the extreme points of $X$ in the 2-dimensional space $(v,r)$.
In this space, the feasible set $F$ of $(DW)$ corresponds to the intersection of $H$ with the half-space of equation ``$r \geq \beta$".

The optimal point of $(DW)$ will therefore correspond to the extreme point of $H$ that maximizes the value $v$. This point is at the intersection of the line ``$r=\beta$" (because we are in the LID case) and a facet of $H$, defined by a segment between two extreme points $x_i$ and $x_j$ of the hull $H$ (because we are in a 2-dimensional space).

\end{proof}
We denote by $\widetilde{x}_1$ and $\widetilde{x}_2$ the two extreme points with nonzero support in $\xi^*$.

In the following, we prove that the coefficient $\delta$ obtained at convergence of Phase I corresponds to an optimal solution $\lambda^*$ of the Lagrangian minimization problem.
Moreover, $\widetilde{x}_1$ and $\widetilde{x}_2$ are optimal solutions of $(BOR)$ maximizing $\mu_{\delta}$.

\begin{pte}\label{prop:equiv-LR}
Let $(x_A, \delta)$ be the result obtained at convergence of Algorithm \ref{alg:graphs_twophases:first_phase} (when not infeasible). Then, $\mu_\delta (\widetilde{x}_1) = \mu_\delta (\widetilde{x}_2) = \mu_\delta (x_A) $. 
Moreover,  $\delta \in \arg\max \theta(\lambda)$.
\end{pte}

\begin{proof}



Note that the goal of Algorithm \ref{alg:graphs_twophases:first_phase}  is precisely to find a pair of points $x_A \in X$ and $x_B \in X$ such that:
\begin{itemize}
  \item [(i)] Solutions $x_A$ and $x_B$ are Pareto-supported (thus are extreme points of the convex hull $H$)
  \item  [(ii)]Solutions $x_A$ and $x_B$ are consecutive in the ordered set of supported-Pareto points $S$ (therefore the segment between $x_A$ and $x_B$ is a facet of $H$)

  \item  [(iii)]In space (value $v$, resource $r$), consider the line  with equation $\theta = v + \lambda( \beta - r)$ passing through $x_A$ and $x_B$ ($\theta$ and $\lambda$ being determined by this condition). Solutions $x_A$ and $x_B$ are such that $\theta$ is minimal among pairs satisfying properties defined in (i) and (ii).
\end{itemize}

Thus, we can conclude that solutions  $\widetilde{x}_1$, $\widetilde{x}_2$, $x_A$ and $x_B$ are all on the same line $\theta = v + \lambda( \beta - r)$, using Lemma \ref{lem:opt_dw}.
This line has slope $-\delta$, as $\delta= \frac{\arcvalobj{1}{x_{A}}-\arcvalobj{1}{x_{B}}}{\arcvalobj{2}{x_{B}}-\arcvalobj{2}{x_{A}}}$.
Therefore, $\mu_\delta (\widetilde{x}_1) = \mu_\delta (\widetilde{x}_2) = \mu_\delta (x_A) $.

There  exists $\lambda^* \in \arg\max \theta(\lambda)$ such that the slope of this line is also equal to $-\lambda^*$: indeed, if window lower bound $\beta$ decreases by one unit, a new optimal solution of $(DW)$ can be found on the line between $\widetilde{x}_1$ and $\widetilde{x}_2$, with increased objective value corresponding to an optimal dual value of constraint \eqref{ressource_dualisee}.
\end{proof}

All in all, the upper bound $V_1(x_A, \delta)$ obtained at convergence of Algorithm 1 corresponds exactly to the Lagrangian upper bound $\min_{\lambda \geq 0} \theta(\lambda)$.

\section{Phase II of BORWin: 
a label-extension procedure for the AWCLPP}
\label{sec:borwin}

We consider now the AWCLPP as described in section \ref{sec:WCLPP}. The second phase of the BORWin algorithm aims at solving the AWCLPP using value $\delta$ and search space $\Omega$ obtained from Algorithm \ref{alg:graphs_twophases:first_phase}. The dedicated label extension algorithm solves the LPP problem with objective $\agg{\delta}{.}$. This algorithm uses  value $\delta$ both to guide the search and to prune  sub-paths, as Lagrangian relaxation-based label extension algorithms do for the RCSPP \cite{dumitrescu2003improved}. 

This section is organised as follows. In section \ref{sec:hybridpaths}, we introduce the concept of hybrid paths and establish properties on which the algorithm efficiency relies. 
We provide in section \ref{sec:UB-mutichoiceknapsack} an additional upper bounding scheme when the AWCLPP has the special structure of a multiple choice knapsack with nested window constraints (NMCKP) (see definition in the introduction).
The overall second-phase algorithm is described in section \ref{sec:secondphasealg}.

\subsection{Properties of hybrid paths and dominance rule}
\label{sec:hybridpaths}

We introduce the concept of hybrid paths that will be used to build feasible paths of the AWCLPP defined on graph $G=(U,A)$.

The notations used in Section \ref{sec:borwin} are the following.  The set of all paths  from vertex $s$ to vertex $p$ in  $G$ is denoted by $\Pi_{sp}$.
Any path $\pi$ in $\Pi_{sp}$ can be identified using its vertex and edge sets $\pi=(U_\pi,A_\pi)$.
Let $\pi_{uv}$ denote its sub-path from vertex $u\in U_\pi$ to vertex $v\in U_\pi$.
The AWCLPP aims at finding a path of maximal value from $s$ to $p$ in acyclic graph $G$. Such a path must also satisfy all window constraints, \textit{i.e.}, $\underline{R}_u \leq R(\pi_{su}) \leq \overline{R}_u$, $\forall u\in U_\pi$. We also identify $V_1(\pi)=V(\pi)=\sum_{a\in A_\pi} V(a)$ as the value of a path $\pi\in\Pi_{sp}$ and $V_2(\pi)=R(\pi)=\sum_{a\in A_\pi} R(a)$ as its used resource amount. 




\begin{dfn}[Hybrid path $\hat\pi$]
    For a given vertex $v \in U$, a \textit{hybrid path} $\hat\pi\in\Pi_{sp}$ \textit{through vertex $v$} in graph $G$ is  composed of two \textit{sub-paths}:
    $\pi$ from $s$ to $v$, and $\bar{\pi}$ from $v$ to $p$.
    Sub-path $\pi$ satisfies the  windows constraints of the AWCLPP, while $\bar\pi$ relaxes them and is the longest path from $v$ to $p$ according to $\agg{\delta}{.}$.
\end{dfn}

From such hybrid paths, it is possible to define upper bounds of value $\agg{\delta}{\pi}$ for any path $\pi$ feasible for the AWCLPP.

\vspace{2mm}
\begin{thm}\label{thm:ub:borwin2}
    Let $\hat\pi=(\pi,\bar\pi)$ be a hybrid path  through vertex $v$. 
   For any hybrid path $\hat\pi'$ through $v$, feasible for the AWCLPP, it holds that $\agg{\delta}{\hat\pi}\geq \agg{\delta}{\hat\pi'}$.
\end{thm}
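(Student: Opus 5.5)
The plan rests on the additivity of the aggregated value $\agg{\delta}{\cdot}$ along a path, combined with the optimality of the relaxed suffix $\bar\pi$. I read the statement the way it is meant to be used for pruning labels: both hybrid paths pass through $v$ and share the same feasible prefix $\pi$ from $s$ to $v$ (the label being extended). So $\hat\pi'=(\pi,\pi')$, where $\pi'$ is some $v$–$p$ sub-path such that the concatenation is feasible for the AWCLPP. If the prefixes were allowed to differ, nothing links $\agg{\delta}{\pi}$ to the value of the other prefix and the inequality could fail. Making this reading explicit is the first step and, I think, the only delicate point.

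\textbf{Step 1 (additivity).} Since $V_1(\pi)=\sum_{a\in A_\pi}V(a)$ and $V_2(\pi)=\sum_{a\in A_\pi}R(a)$, the aggregated value is the sum of the arc weights $\mu_\delta(a)=V(a)+\delta R(a)$. This is the same arc weighting used in Phase I (Figures~\ref{fig:example_awclpp_phase_1_step_1} and \ref{fig:example_awclpp_phase_1_step_2}). Because $A_{\hat\pi}$ is the disjoint union of $A_\pi$ and $A_{\bar\pi}$, we get
\[
\agg{\delta}{\hat\pi}=\agg{\delta}{\pi}+\agg{\delta}{\bar\pi}.
\]
In the same way, $\agg{\delta}{\hat\pi'}=\agg{\delta}{\pi}+\agg{\delta}{\pi'}$. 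For the case $\delta=+\infty$, $\agg{+\infty}{\cdot}=V_2$ is additive as well.

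\textbf{Step 2 (optimality of the suffix).} By definition of a hybrid path, $\bar\pi$ is a longest $v$–$p$ path in $G$ with respect to $\mu_\delta$, with all window constraints relaxed. Its comparison set is therefore every $v$–$p$ path of $G$. This maximum exists and is computable because $G$ is acyclic. Feasibility of $\hat\pi'$ for the AWCLPP only adds window constraints on the vertices of $\pi'$. In particular, $\pi'$ is still a $v$–$p$ path of $G$, so $\agg{\delta}{\pi'}\le\agg{\delta}{\bar\pi}$.

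\textbf{Step 3 (conclusion).} Adding $\agg{\delta}{\pi}$ to both sides of the Step 2 inequality and using Step 1 gives $\agg{\delta}{\hat\pi}\ge\agg{\delta}{\hat\pi'}$. Feasibility of $\hat\pi'$ is never actually needed beyond its suffix being a path in $G$. The inequality therefore holds for every extension of $\pi$, feasible or not, and a fortiori for the feasible ones, which is what Phase II needs to discard labels against the bound $\agg{\delta}{x_A}$ defining $\Omega$.
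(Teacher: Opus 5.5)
Your proof is correct and is essentially the paper's argument. The paper also takes $\hat\pi'=(\pi,\bar\pi')$ with the same prefix $\pi$, cancels the common prefix contribution, and concludes from the fact that $\bar\pi$ maximizes the aggregated value over all $v$--$p$ paths (by contradiction, where you argue directly), while leaving implicit the shared-prefix reading and the irrelevance of feasibility that you make explicit; one small aside: Phase II discards labels by comparing against $\underline{\mu}_\delta$ of an incumbent via Pruning rule~\ref{pr:pruningrule}, not against the bound defining $\Omega$.
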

\begin{proof}

    Suppose $\agg{\delta}{\hat\pi}<\agg{\delta}{\hat\pi'}$, then as $\hat\pi=(\pi,\bar\pi)$ and $\hat\pi'=(\pi,\bar\pi')$, we deduce $\agg{\delta}{\bar\pi}<\agg{\delta}{\bar\pi'}$.
    Hence, there is a contradiction, as $\bar\pi$ would not maximize $\agg{\delta}{.}$.
\end{proof}


A lower bound on the value $\agg{\delta}{\pi^{*}}$, with $\pi^{*}$ the optimal path of the AWCLPP can also be obtained.

\vspace{2mm}
\begin{thm}
    \label{thm:graphs_twophases:lower_bound}
    Let $\pi$ and $\pi^{*}$ be, respectively, a feasible and an optimal path for the AWCLPP.
    Let $\underline{\mu}_\delta(\pi)= V(\pi)+\delta \minresource{p}$.
    Then $\pi^{*}$  is such that $\agg{\delta}{\pi^{*}} \geq \underline{\mu}_\delta(\pi)$.
\end{thm}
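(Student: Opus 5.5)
The inequality follows from two elementary facts about a feasible path of the AWCLPP. First, the optimal path has at least the value of any feasible path. Second, any feasible path ends with a resource amount at least the window lower bound $\minresource{p}$ of the terminal vertex. The plan is to write $\agg{\delta}{\pi^{*}} = V(\pi^{*}) + \delta R(\pi^{*})$ and bound each of the two terms from below separately.

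For the first term, I use that $\pi^{*}$ is optimal for the AWCLPP, which maximizes $V = \arcvalobj{1}{\cdot}$ over feasible paths. Since $\pi$ is feasible, $V(\pi^{*}) \geq V(\pi)$.

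For the second term, I use that $\pi^{*}$ is feasible. The window constraints hold at every vertex of $\pi^{*}$, in particular at the terminal vertex $p$, so $R(\pi^{*}_{sp}) = R(\pi^{*}) \geq \minresource{p}$. In the setting of problem $(P)$, $\minresource{p}$ is exactly $\beta$, the lower bound of the main additional constraint. Then, because $\delta \geq 0$, we get $\delta R(\pi^{*}) \geq \delta \minresource{p}$.

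Adding the two inequalities gives $\agg{\delta}{\pi^{*}} = V(\pi^{*}) + \delta R(\pi^{*}) \geq V(\pi) + \delta \minresource{p} = \underline{\mu}_\delta(\pi)$.

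The only point needing care is the sign of $\delta$. Phase I only ever produces $\delta \in \mathbb{R}^+$: it is either $0$ or the ratio $\frac{\arcvalobj{1}{x_{A}}-\arcvalobj{1}{x_{B}}}{\arcvalobj{2}{x_{B}}-\arcvalobj{2}{x_{A}}}$. That ratio is positive because $x_A$ and $x_B$ are mutually non-dominated, with $\arcvalobj{1}{x_A} > \arcvalobj{1}{x_B}$ and $\arcvalobj{2}{x_B} > \arcvalobj{2}{x_A}$. The degenerate case $\delta = +\infty$ is never returned by Algorithm \ref{alg:graphs_twophases:first_phase}, so it needs no separate treatment.
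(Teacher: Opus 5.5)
Your proof is correct and follows the paper's argument: $V(\pi^{*}) \geq V(\pi)$ by optimality, $R(\pi^{*}) \geq \minresource{p}$ by feasibility at the terminal vertex, and $\delta \geq 0$, then add the two inequalities. The paper only asserts $\delta \geq 0$ ``by construction'', and your justification from Algorithm~\ref{alg:graphs_twophases:first_phase} makes that explicit; nonnegativity is all you need, so there is no reason to claim strict positivity.
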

\begin{proof}
   Clearly, $\arcvalobj{}{\pi^{*}}\geq \arcvalobj{}{\pi}$, $\arcres{\pi^{*}} \geq \minresource{\pi}$  and by construction $\delta \geq 0$.
    Hence $\agg{\delta}{\pi^{*}} \geq \underline{\mu}_\delta(\pi) =  \arcvalobj{}{\pi}+\delta \minresource{p}$.

\end{proof}

 According to Theorems \ref{thm:ub:borwin2} and \ref{thm:graphs_twophases:lower_bound}, the following pruning rule 
 can be enounced.
\begin{pr}
\label{pr:pruningrule}
Let $\pi$ be a feasible solution. For a given vertex $v$, consider a hybrid path $\hat \pi' = (\pi',\bar\pi')$ through $v$, such that $\agg{\delta}{\hat \pi'}\leq \underline{\mu}_\delta(\pi)$.
Then, sub-path $\pi'$ can be discarded.
\end{pr}

Pruning rule~\ref{pr:pruningrule} is useful for implicit enumeration algorithms such as the one proposed in Section \ref{sec:secondphasealg}.
We also propose a weakened variant of the RCSPP dominance rule \ref{dm:dominancerule1}.
\begin{dm}
\label{dm:dominancerule3}
If two feasible  sub-paths $\pi_{su}$ and $\pi'_{su}$ from $s$ to $u$ are such that 
\begin{equation*}
R(\pi_{su})= R(\pi'_{su})\text{ and }V(\pi_{su})\leq V(\pi'_{su}) 
\end{equation*}
then $\pi'_{su}$ can be discarded as it cannot lead to a better solution than $\pi_{su}$.
\end{dm}


\subsection{Complementary upper bounds based on other underlying easily solved structures}
\label{sec:UB-mutichoiceknapsack}

Problem $(P)$ has been defined as a structured problem featuring a shortest/longest path sub-structure --represented by feasible set $X$-- coupled to window constraints \eqref{main_additional_constraint} - \eqref{additional_constraints}.
Based on such a structure, phase I of the BORWin algorithm computes a Lagrangian upper bound on a weighted combination of values $V_1$ and $V_2$.

However, other useful underlying structures may exist in $(P)$, and such structures can also be exploited to compute  upper bounds complementary to the upper bounds obtained in phase I.

Consider a subproblem $(Q)$ $\max_{x \in Y} cx $ of $(P)$, \textit{i.e.}, the solution set $Y$ contains the feasible set of $X$ :
$$
\{ x \in X \; | \; \eqref{main_additional_constraint} -  \eqref{additional_constraints} \} \subset Y
$$

Suppose problem $(Q)$ is easily solved, in the sense that there exists a very efficient algorithm to obtain an optimal solution.
Then, BORWin algorithm can benefit from the upper bound on $(P)$ obtained by solving $(Q)$.

More precisely, we suppose that the structure of $(Q)$ is preserved by partial variable fixing, \textit{i.e.}, for any partial solution $\overline{x}$  - defined as a dictionary containing the following pairs (index, partial solution value for this index), problem $(Q(\overline{x}))$ remains easily solved:
	\begin{alignat*}{3}
		(Q(\overline{x})) \quad & v_Q(\overline{x}) \; = \;  \max_x cx
		& &   \\
		& \text{s.t.} \quad \;
		  x_i = \overline{x}_i & & \quad \forall i \in \overline{x}
		\end{alignat*}

Then, phase II of BORWin algorithm can efficiently rely on the following very classical result.
\begin{pte}
Consider $v$ the value of a given feasible solution to $(P)$.
For any partial solution $\overline{x}$, if $v_Q(\overline{x}) \leq v$ then partial solution $\overline{x}$ cannot lead to a solution with better value than $v$ (and therefore it can be ``pruned").
\end{pte}

Note that for such a bound to be complementary to the Lagrangian bound from phase I, it is necessary that feasible set $Y$ be different from $X$. 

Set $Y$ can contain a selection of window constraints \eqref{main_additional_constraint} -  \eqref{additional_constraints} for which efficient algorithms are known.
In the case where set $Y$ can be identified to a matroid, efficient greedy algorithms are optimal \cite{faigle2009general}.    
In another case where the union of (some of) these window constraints features a nested multiple-choice knapsack structure, efficient linear relaxation algorithms exist \cite{armstrong1982multiple,heintzmann2023efficient}. Moreover, if such a nested multiple-choice knapsack structure features symmetric weights, an efficient dedicated A* algorithm exists to obtain an integer optimal solution \cite{heintzmann2023efficient}.

Returning to the AWCLPP, 
if the graph possesses such a special structure $Y$,
let $UB(\pi)$ denote the specific upper bound associated with the partial solution defined by a sub-path $\pi$. If no special structure exists, we can always set $UB(\pi)=V(\pi)$. We then define the following additional pruning rule.

\begin{pr}\label{pr:ub}
Let $\pi$ be a feasible solution and $\pi'$ a sub-path issued from $s$. If $UB(\pi')\leq V(\pi)$, then $\pi'$ can be discarded.
\end{pr}

\subsection{Algorithm of Phase II}
\label{sec:secondphasealg}

\textbf{Algorithm \ref{alg:graphs_twophases:second_phase}} describes the enumeration of the second phase, which works as follows.
A hybrid path $\hat\pi=(\pi,\overline{\pi})$ can be viewed as a label used in standard label extension algorithms.
As soon as a new label/hybrid path $\hat\pi$ is generated, the values $V(\hat\pi_{sv})$, $R(\hat\pi_{sv})$ and $\agg{\lambda}{\hat\pi_{sv}}$ are stored in a list $L_v$ for each vertex $v\in U_{\hat\pi}$ to avoid useless recomputations.
A preprocessing phase at line \ref{Bp2:computebolp} computes the longest path $\bar\pi_{up}$ according to $\agg{\delta}{.}$ from each vertex $u\in U$ to $p$.
The list $L$ of hybrid paths is then initialized to a unique hybrid path $(\emptyset,\bar\pi_{sp})$ at line \ref{BP2:initL}. This corresponds to the initial label with an empty feasible  sub-path and the longest path in $G$ according to $\agg{\delta}{.}$.
List $L$ will be kept sorted by decreasing value $\agg{\delta}{.}$ during the whole algorithm to drive the search towards the most promising direction in terms of the Lagrangian upper bound.
The best path found so far ($\pi^*$) is initialized as the empty path at line \ref{Bp2:initbest}.
As long as $L$ is not empty, the main loop (lines \ref{Bp2:mainloop}--\ref{Bp2:endmainloop}) proceeds with the following steps.
The first hybrid path $\hat\pi=(\pi,\bar\pi)$ in $L$ is selected at line \ref{Bp2:poplabel}. Then, two cases are considered.

The first case is when $\hat\pi$ is feasible, which is tested at line \ref{Bp2:checkfeasible} by checking window constraints for all vertices in longest path $\bar\pi$. Then, lines \ref{Bp2:prunelabels}--\ref{Bp2:endprunelabels} remove all labels $\hat\pi'=(\pi',\bar\pi')$ in $L$ whose upper bounds $\agg{\delta}{\hat\pi'}$ is not larger than the lower bound $\underline{\mu}_\delta(\hat\pi)$ defined by $\hat\pi$ (pruning rule \ref{pr:pruningrule}).
We also remove label $\hat\pi'$ when $UB(\pi')\leq V(\bar\pi)$ (pruning rule \ref{pr:ub}).

The second case is when $\hat\pi$ is not feasible due to $\bar\pi$  and we consider extensions of $\pi$.
A classical label extension algorithm would then consider the last vertex $u$ in the feasible sub-path $\pi$ and its extension through each arc $(u,v)\in A$. 
Taking advantage of the longest path already evaluated $\bar\pi$, we generalize this extension scheme to generate more new hybrid paths, thus obtaining feasible paths faster. For each vertex $u$ of $\bar\pi$ such that
$\hat\pi_{su}$ is a feasible sub-path, which is checked at line \ref{Bp2:checksubpath}, all feasible extensions of $\hat\pi_{su}$ via each arc $(u,v)\in A\setminus A_{\bar\pi}$ (the successor of $u$ in $\bar\pi$ can be excluded) are considered inside the loop delimited by lines \ref{Bp2:extensionloop}--\ref{Bp2:endextensionloop}. This extension, denoted by $\pi'$, is checked according to the weak dominance rule \ref{dm:dominancerule3} against each $\pi''$ stored in list $L_v$ of feasible sub-paths that have already reached vertex $v$ (line \ref{Bp2:checkbdm}). If sub-path $\pi'$ is not dominated, new hybrid path $\hat\pi'$ is created. Then, if the upper bounds $\mu_\delta(\hat\pi')$ and $UB(\pi')$  compared to the best known solutions do not allow to discard it via pruning rules \ref{pr:pruningrule} and \ref{pr:ub}, hybrid path $\hat\pi'$ and sub-path $\pi'$ are added to $L$ and $L_v$, respectively (lines \ref{Bp2:checkbdm}--\ref{Bp2:endcheckbdm}).

\begin{algorithm}
\caption{BORWin: second phase}\label{alg:graphs_twophases:second_phase}
\begin{algorithmic}[1]\onehalfspacing
\Require An AWCLPP graph $G=(U,A)$ and a scalar $\delta$
\State Compute $\bar\pi_{up}$,  the longest $u-p$  path such that $\agg{\delta}{\bar\pi_{up}}$ is maximized, $\forall u\in U$.\label{Bp2:computebolp}
\State $L \gets [(\emptyset, \bar\pi_{sp})]$\label{BP2:initL}
\State $\pi^* \gets \emptyset$\label{Bp2:initbest}
\While{$L\neq \emptyset$} \label{Bp2:mainloop}
    \State Pop $\hat\pi=(\pi,\bar\pi)$ the first element of $L$\label{Bp2:poplabel}
    \If{$\forall v\in U_{\bar\pi}$, $R(\hat\pi_{sv})\in[\minresource{v},\maxresource{v}]$ ($\hat\pi$ is feasible)}\label{Bp2:checkfeasible}
        \For{$\hat\pi'=(\pi',\bar\pi')\in L$}\label{Bp2:prunelabels}
            \If{$\agg{\delta}{\hat\pi'}\leq \underline{\mu}_{\delta}(\hat\pi)$ or $UB( \pi')\leq V(\hat \pi)$}
                \State remove $\hat\pi'$ from $L$
            \EndIf
        \EndFor\label{Bp2:endprunelabels}
        \If{$\pi^* = \emptyset$ or $V(\hat\pi)>V(\pi^*)$}
            \State $\pi^* \gets \hat\pi$
        \EndIf
    \Else
        \For{$u\in U_{\bar\pi}\setminus\{p\}$}\label{Bp2:extensionloop}
            \State $\pi' \gets \pi\cup \bar\pi\setminus\bar\pi_{up}$
            \If{$R(\pi'_{sv})\in[\minresource{v},\maxresource{v}]$, $\forall v \in \bar\pi\setminus\bar\pi_{up}$}\label{Bp2:checksubpath}
                \For{each arc $(u,v)\in A\setminus A_{\bar\pi}$}
                    \State ${\pi'} \gets {\pi'}\cup (u,v)$
                    \If{$\forall \pi''\in L_v$, $R(\pi'')\neq R(\pi') \vee V(\pi'')>V(\pi')$}\label{Bp2:checkbdm}
                        \State $\hat\pi'\gets (\pi',\bar\pi_{vp})$
                        \If{$\agg{\delta}{\hat\pi'}> \underline{\mu}_{\delta}(\pi^*)$ and $UB(\pi')> V(\pi^*)$}
                            \State add $\hat\pi'$ to $L$ according to decreasing $\agg{\delta}{.}$
                            \State add $\pi'$ to $L_v$
                        \EndIf
                    \EndIf\label{Bp2:endcheckbdm}
                \EndFor
            \EndIf
        \EndFor\label{Bp2:endextensionloop}
    \EndIf
\EndWhile\label{Bp2:endmainloop}
\State \Return $\pi^*$
\end{algorithmic}
\end{algorithm}

\subsection{Application to the AWCLPP}
\label{sec:AWXLPP-application}
In this section, we present an illustrative application of Phase II using Algorithm 2 to the AWCLPP.

Consider again the AWCLPP instance described in \textbf{Example \ref{ex:WCLPP}}, \textbf{Figure~\ref{fig:WCLPP}} and \textbf{Table~\ref{tab:paths}}.
At the first phase, described in Section~\ref{sec:phase1AWCLPP}
, we obtained $\delta= \frac{1}{19}$.

We start the second phase with a first hybrid path maximizing $\mu_\delta()$. We choose $\hat\pi^{1}=(\emptyset, \{(s, 1), (1, 3), (3, 2), (2, p)\})$. For this example, we ignore any specific upper bound $UB(\pi^{1})$.
We initialize list $L=[\hat\pi^{1}]$.
\textbf{Figure \ref{fig:figure_phase_two_1}} gives the search space and the known solution at the start of the second phase.
The hybrid path satisfies all windows constraints except the one of $p$ and is thus infeasible.
Indeed, $R(\hat\pi^{1})=35$ while $\maxresource{p}=29$.
We then remove $\hat\pi^1$ from $L$ and create new hybrid paths from $\hat\pi^1$.
\begin{itemize}
    \item The only arc starting with $s$, different from $(s, 1)$ is $(s, 3)$.
    The longest path from $3$ to $p$ with  $\delta=0.053$ is $\{(3, p)\}$.
    Hence we have $\hat\pi^{2} = (\{(s, 3)\}, \{(3, p)\})$ with $V(\hat\pi^{2})=27$ and $R(\hat\pi^{2})=21$.
    \item The only arc starting with $1$, different from $(1, 3)$ is $(1, 2)$.
    The longest path from $2$ to $p$ with  $\delta=0.053$ is $\{(2, p)\}$.
    Hence we have $\hat\pi^{3} = (\{(s, 1), (1, 2)\}, \{(2, p)\})$ with $V(\hat\pi^{3})=31$ and $R(\hat\pi^{3})=25$.
    \item The only arc starting with $3$, different from $(3, 2)$ is $(3, p)$.
    As $p$ is the target vertex, we have $\hat\pi^{4} = (\{(s, 1), (1, 3), (3,p)\}, \emptyset)$ with $V(\hat\pi^{4})=33$ and $R(\hat\pi^{4})=16$.
    However,  sub-path $\pi^4$ of $\hat\pi^4$ does not satisfy the constraints.
    Indeed, this path uses 16 amount of resources from $s$ to $p$ while $\minresource{p}=20$.
    Consequently, we discard path $\hat\pi^4$.
    \item There are no arcs starting with $2$ different from $(2, p)$.
\end{itemize}
Two hybrid paths were generated from $\hat\pi^1$, namely $\hat\pi^2$ and $\hat\pi^3$.
By adding these hybrid paths to $L$ in decreasing values of $\agg{\delta}{.}$, we have $L= [\hat\pi^3, \hat\pi^2]$.
\textbf{Figure \ref{fig:figure_phase_two_2}} gives the search space and known solutions after the first iteration of phase II.
Hybrid path $\hat\pi^3$ is a feasible solution.
We save $\hat\pi^3$ as the current best solution found $\pi^*$ and remove $\hat\pi^3$ from $L$.
We compute the lower bound 
We compute the lower bound $\underline{\mu}_{\delta}(\pi^*))=V(\pi^*)+\delta_\cdot \minresource{p}=\frac{609}{19}$.
As $\agg{\delta}{\hat\pi^2} = \frac{534}{19} < \underline{\mu}_{\delta}(\hat\pi^3)$, we can discard solution $\hat\pi^2$.
List $L$ being now empty, the algorithm stops and returns the path corresponding to $\pi^3$ as the optimal solution.

\begin{figure}[htb!] 
    \centering
    \begin{subfigure}{0.49\textwidth}
    \centering
    \includegraphics[width=\linewidth]{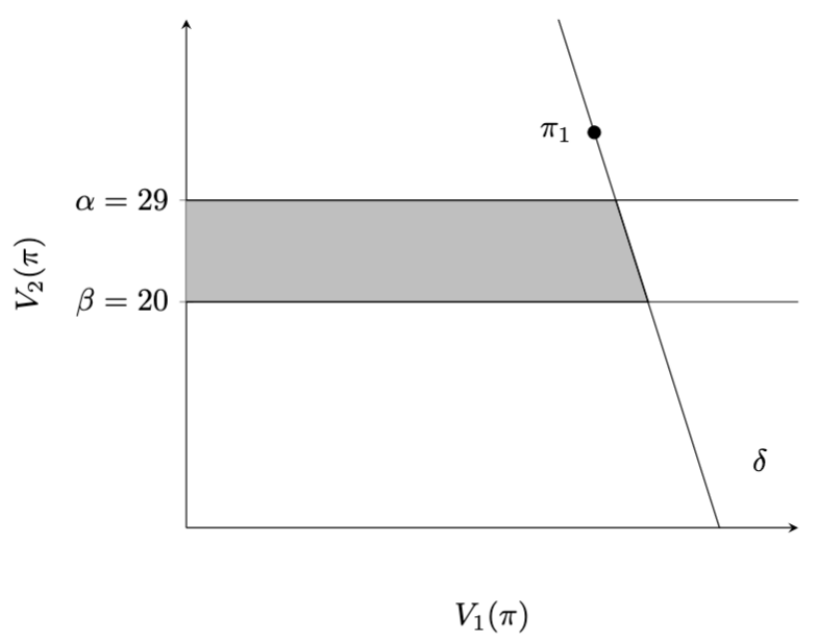}
    \caption{Initial seach space} 
    \label{fig:figure_phase_two_1}
    \end{subfigure}
    \begin{subfigure}{0.49\textwidth}
    \centering
    \includegraphics[width=\linewidth]{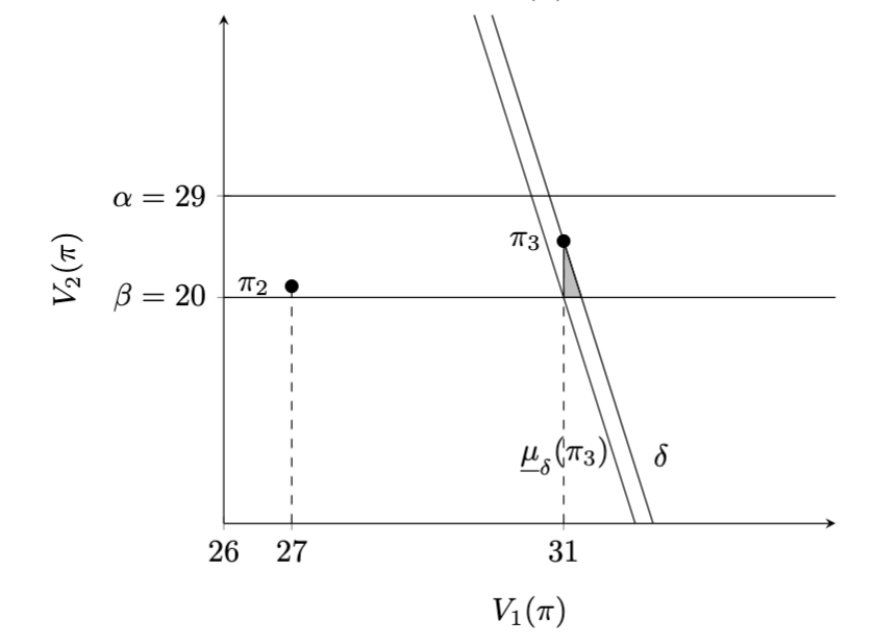}
    \caption{Search space after the first iteration}
     \label{fig:figure_phase_two_2}
    \end{subfigure}
    \caption{Illustration of the search space explored by Phase II algorithm for Example~\ref{ex:WCLPP}}
\end{figure}


Note that the proposed implementation is to store for each vertex $v$ a list  $L_v$ containing all enumerated non-dominated feasible sub-paths from $s$ to $v$.

\begin{rmk}
\label{rmk:efficient}
This implementation is particularly effective if there are many paths using the exact same amount of resource between vertex $s$ and a vertex $v$, as the lists $L_v$ at each vertex will remain small thanks to the weakened dominance rule \ref{dm:dominancerule3}.
This appears when there are symmetries in resource use. This happens to be the case for the NMCKP when some weights of the items are identical.
\end{rmk}

\section{Application to the single-plant Hydro Unit Commitment problem}
\label{sec:huc}

The Hydro Unit Commitment  (HUC) problem specific to hydroelectric plants is part of the electricity production planning problem called Unit Commitment Problem (UCP).
At Electricité de France (EDF), large-scale instances of the UCP are solved on a daily basis by a Lagrangian decomposition \cite{renaud1993daily}.
The HUC problem is the sub-problem dedicated to the valleys and is solved using an MILP formulation \cite{hechme2010short}.  

In this paper the considered use-case is the single-plant HUC (1-HUC). Recall it features window constraints as main additional constraints. The idea is to see how the BORWin algorithm compares in terms of performance with classical 1-HUC approaches in the literature. 

In this section, a description of the 1-HUC along with a mathematical formulation is proposed in Section~\ref{sec:MILP}. A state of the art relative to the 1-HUC focusing on dynamic programing is presented in Section~\ref{sec:litterature_1-huc}. It is followed in Section~\ref{sec:1-huc_graph} by  the graph representation dedicated to model the 1-HUC as an AWCLPP. 

\subsection{Description and formulation for the 1-HUC} \label{sec:MILP}


The 1-HUC comprises a set of units, each of which being the combination of a turbine and
a generator. 
From the hydroelectric production principle, the units discharge a water flow from
a reservoir upstream to another downstream. The reverse is also
possible for some plants equipped with pumping units. 
The horizon $T$ is discretized in time periods. 
The plant operates
at a finite number $|\setop{}|$ of so called operating points defined as pairs $(P_i, D_i)$, $i\in \{1, ..., |\setop{}|\}$, of the generated
power and the corresponding water flow. Note that operating points do not depend on the time period.
The production rate is subject
to upper and lower bounds, namely ramping constraints defined by bounds $R_u$ and $R_d$, respectively.
Min-up/min-down constraints apply, in the sense that when a plant is ramping up (resp. down), 
a minimum time delay $L$
is imposed before ramping down (resp. up).
Each reservoir has an initial volume, as well
as window constraints, defined by a minimum and
maximum volume per time period. The time-dependent maximum and minimum
capacities make it possible to account for additional intake of water in the reservoirs and also to set target volumes for specific time periods.
 A target volume is defined as the volume to be met at the end of a predefined period, typically a day, for water management policies. The case of a price-taker revenue maximization
problem is considered. 
For each time period $t$ and operating point $i$, the associated revenue is  the price $\valop{t}{i}$, usually provided by the Lagrangian decomposition algorithm run daily at EDF, and defined as follows: $ \valop{t}{i}=\valpui{t}\pui{i}+(\Phi^2-\Phi^1)D_i$ where $\valpui{t}$ is a time-dependent unitary value of the power, and $\Phi^1$ (resp. $\Phi^2$) is the unitary value of the water in the upstream (resp. downstream) reservoir at the end of the time horizon.

In the sequel a mathematical programming formulation for the considered 1-HUC problem is introduced.
Without loss of generality the proposed formulation does not feature pumps, since  an instance of the 1-HUC featuring both pumps and turbines can be cast as an instance featuring only turbines~\cite{van2021decomposition}.
Besides  complementary reformulations along with bound tightening techniques from \cite{heintzmann2023efficient} are used to further enhance the formulation.

Let $x_{t}^{i}$ be the binary variable indicating that operating point $i$ is active at time period $t$.
The formulation $\model{}$ is as follows.
\begin{align}
    \text{max } & \sum_{t=1}^{T}\sum_{i\in \setop{}} \valop{t}{i} \cdot x_{t}^{i} \label{eq:model_obj}
\end{align}
\vspace{-.2in}
\begin{align}
    \text{s.t.} \quad
    & \binfstar{t} \leq \sum_{t'=1}^{t} \sum_{i\in \setop{}}\deb{i}x_{t'}^{i} \leq \bsupstar{t} & \forall t \leq T \label{eq:model_flow}\\
    & x_{t}^{i} \geq x_{t}^{i+1} & \forall t \leq T, \forall i \leq |\setop|-1 \label{eq:model_precedence_turbine}\\
    & -\rampdown{} \leq \sum_{i\in \setop{}}\deb{i}x_{t}^{i}-\sum_{i\in \setop{}}\deb{i}x_{t-1}^{i} \leq \rampup{} & \forall t \in \{2,\ldots, T\} \label{eq:model_ramp}\\
    & \sum_{t'=t-\lenpallier{}+1}^{t} v_{t'}^{i}\leq x_{t}^{i} & \forall t \in \{\lenpallier{}, \ldots, T\}, \forall i \in \setop{} \label{eq:model_min_up}\\
    & \sum_{t'=t-\lenpallier{}+1}^{t} v_{t'}^{i} \leq 1-x_{t-\lenpallier{}}^{i} & \forall t \in \{\lenpallier{}, \ldots, T\}, \forall i \in \setop{} \label{eq:model_min_down}\\
    & v_{t}^{i} \geq x_{t}^{i}-x_{t-1}^{i} & \forall t \in \{2, \ldots, T\}, \forall i \in \setop{} \label{eq:model_additional_min_up_down} \\
    & x_{t}^{i} \in \{0,1\} & \forall t \leq \nbt, \forall i \in \setop{} \label{eq:model_binary_x}\\
    & v_{t}^{i} \in \{0,1\} & \forall \; 2\leq t \leq \nbt, \forall i \in \setop{} \label{eq:model_binary_v}
\end{align}

Objective-function \eqref{eq:model_obj} represents the total revenue.
Inequalities \eqref{eq:model_flow} correspond to the tight upper bound $\bsupstar{t}$ and lower bound $\binfstar{t}$ for the amount of water used since time period 1, for each time period $t\leq T$.
Inequalities \eqref{eq:model_precedence_turbine} are the order constraints between the operating points.
Inequalities \eqref{eq:model_ramp} express the ramping constraints.
Inequalities \eqref{eq:model_min_up} to \eqref{eq:model_additional_min_up_down} are the min-up/down constraints as formulated in \cite{rajan2005minimum,rottner2025generalized} with additional binary variables $v_{t}^{i}$.

In this formulation, constraints \eqref{eq:model_precedence_turbine} to \eqref{eq:model_binary_v} can be modeled as a longest-path problem in an acyclic graph.  
Constraints \eqref{eq:model_flow} for $t'<T$ are additional constraints \eqref{additional_constraints}, and constraint \eqref{eq:model_flow} for $t' = T$, which  corresponds to the target volume, is the main additional constraint \eqref{main_additional_constraint}.

\paragraph{}
Note that the 1-HUC features a Nested Multiple-Choice Knapsack (NMCKP) structure: indeed, for each $t$, constraint \eqref{eq:model_flow} can be interpreted as a knapsack constraint with both an upper and a lower bound. The multiple-choice aspect comes from the fact that for a given time $t'$, variables $x^i_{t'}$ are incremental, therefore exactly one operating point is chosen per time period. As for the nested structure, it arises from constraints \eqref{eq:model_flow} where the amount of water used $\sum_{t=1}^{t'} \sum_{i\in \setop{}}\deb{i}x_{t}^{i}$ increases with $t$.

\subsection{Related work on the HUC}
\label{sec:litterature_1-huc}

The second phase of BORWin can be identified to a dynamic programming method. Such an approach has been widely used to solve the HUC. For the sake of conciseness, the presentation of the related work on the HUC in this section is focused  on dynamic programming.

In \cite{renaud1993daily} a two-phase approach is presented for solving the HUC problem.
In the first phase, an LP for the HUC problem is considered, where  various constraints are omitted.
In the second phase, a dynamic programming algorithm is devised to solve the 1-HUC problem for each plant of a valley. The aim is to get a solution as close as possible to the LP solution, while taking into account omitted constraints.
The underlying graph relies on a discretization of the volume for each reservoir, thus leading to hundreds of possible values.
As there is also a discretization of the time in periods, a vertex is defined for each pair of  values for the volume and time and an arc between any two consecutive vertices in terms of time periods.
A Bellman-Ford algorithm \cite{bellman1958routing} is used to find a path in this graph.
Note that even a fine discretization of the volume discards a lot of realistic paths. 
In this paper, the graph does not rely on a discretization of the volume.


In \cite{perez2010optimal} a dynamic programming approach is defined, aiming to solve the 1-HUC with a target volume. 
The underlying graph 
is similar to the one in \cite{renaud1993daily}.
In order to match a discrete value of the volume and keep the instance feasible, the target volume is relaxed. Then the corresponding arcs are built
 by generating the feasible paths to reach the target volume, while satisfying the bounds on the volume at each time period.
A backward algorithm is then proposed, aiming to maximize the value of the generated power.
As we consider the 1-HUC with and also without target volume, this approach may not be practical.
Indeed, if no target volumes are set, then one needs to repeat the algorithm for any possible volume value at the last time period, which can be exponential.

In \cite{van2021decomposition}, a
decomposition algorithm is considered.  The main idea is to decompose the valley into single plants, thus reducing to 1-HUC in each sub-problem. The 1-HUC problem is transformed as a WCSPP to be solved with dynamic programming~\cite{barrett2008engineering}.
For the underlying graph, there is a vertex for each operating point and each time period and an arc between any two consecutive vertices in time provided the corresponding operating points satisfy the ramping constraints.
When the volumes are not restrictive, the 1-HUC is solved by a shortest path algorithm, otherwise by a labeling algorithm as mentioned in Section \ref{sec:WCSPP}.  In the latter case, 
the labeling algorithm is an adaptation of an RCSPP algorithm \cite{barrett2008engineering}, which  considers a window lower bound 
.
It is mentioned that the use of dominant rules is crucial to efficiently reduce all possible labels. Such rules are not effective between any two labels when one satisfies the lower bound while the other does not.  This would be the case when considering window constraints for the target volume. 

In~\cite{Alexandre2024}
an alternative $A^*$-based exact
algorithm, denoted HA$^*$, is proposed by defining 1-HUC as a longest-path problem in a graph with a pseudo-polynomial number of vertices. Unfortunately, the upper bound used to guide the search is seriously weakened when min-up, min-down and ramping contraints are present.
In~\cite{heintzmann2023efficient}, HA$^*$ is evaluated on 1-HUC instances without such constraints.
 Numerical experiments highlight significant improvements in the case of a window constraint on the target volume compared to state-of-the-art methods.

The 1-HUC problem is related to the subproblem dedicated to a thermal plant. It shares the ramping constraints, but obviously there is no lower bound on the volume and the definition of the min-up/min-down constraints differs slightly. 
In \cite{parmentier2018EDF}, the subproblem dedicated to each thermal plant is represented  as an RCSPP solved with dynamic programming.
Even if the underlying graph could be adapted to represent the min-up/min-down constraints for hydro plants, it would not account for window contraints on the volume, which appears to be the main issue for the 1-HUC problem.

To summarize, efficient dynamic programming-based approach proposed so-far for the 1-HUC mostly either ignored the window constraints or the ramping and min-up/min-down constraints. The labeling algorithm proposed in \cite{van2021decomposition} considers windows  and ramping constraints but no min-up/min-down constraints. In what follows, we show how the 1-HUC with windows, ramping and min-up/min-down constraints can be modeled as an AWCLPP making it addressable by BORWin. Compared to the labeling algorithm proposed in \cite{van2021decomposition}, BORWin uses the phase I bound to guide the search, as well as  pruning rules (\ref{pr:pruningrule}-- \ref{pr:ub}) and dominance rule \eqref{dm:dominancerule3}.

\subsection{Graph representation}
\label{sec:1-huc_graph}


The 1-HUC graph representation we consider extends the ones of \cite{van2021decomposition,heintzmann2023efficient} by taking into account min-up/down constraints and allows us to model the 1-HUC as an AWCLPP.


Let $\Gpoint{}=(\Vpoint{},\Apoint{})$ be the graph defined as follows.
Each vertex $u\in \Vpoint{}$ is defined as a triplet ($t$, $i$, $l$).
For this definition, $t$ is the time period, $i$ the operating point, and $l\in\{-\lenpallier+1, \ldots, \lenpallier-1\}$ the remaining time for min-up/down constraints to be satisfied.
Also, a source vertex and target vertex, respectively $s$=(0,0,0) and $p$=$(\nbt+1,0,0)$ are defined.
For any vertex $u=(t,i,l)\in \Vpoint\setminus\{s,p\}$ we define the window $[\minresource{u}=\binfstar{t};\maxresource{u}=\bsupstar{t}]$.
For vertex $s$, the window is $[\minresource{s}=0;\maxresource{s}=\infty]$ and for $p$ it is $[\minresource{p}=\binfstar{\nbt};\maxresource{p}=\bsupstar{\nbt}]$.

There is an arc in $\Apoint$ from each vertex ($\nbt$, $i$, $l$) towards $p$ of value $0$ and which uses $0$ resource.
For the following description of the arcs, consider a vertex $u$=($t$, $i$, $l$) with $t\in\{0, \ldots, \nbt-1\}$.

If $l \geq 1$ there is an arc $a\in \Apoint{}$ towards ($t+1$, $i$, $l-1$).
Similarly, if $l \leq - 1$ there is an arc $a\in \Apoint{}$ towards ($t+1$, $i$, $l+1$).
If $l=0$, there is an arc towards $a\in \Apoint{}$ ($t+1$, $i$, $0$)

If $l \geq 0$, for any $i'>i$ such that $\sum_{j=i+1}^{i'}\deb{i}\leq \rampup{}$ there is an arc $a\in \Apoint{}$ from $u$ to ($t+1$, $i'$, $\lenpallier{}-1$), or ($t+1$, $i'$,0).
Similarly, if $l \leq 0$, for any $i'<i$ such that $\sum_{j=i+1}^{i'}\deb{i}\leq \rampdown{}$ there is an arc $a\in \Apoint{}$ from $u$ to ($t+1$,$i'$,$-\lenpallier{}+1$).

The value of any arc towards a vertex ($t$, $i$, $l$), with $t\leq \nbt$ is $\sum_{j=0}^{i} \valop{t}{j}$, and the resource used is $\sum_{j=1}^{i}\deb{j}$.



\begin{pte}[Polynomial number of vertices of $\Gpoint{}$] \label{property:polynomialnberofverticesofGc}
    The number of vertices of $\Gpoint{}$ is polynomial with respect to the size of the instance.
\end{pte}
\begin{proof}
    See Appendix \ref{anx:HUC:graph:proofofpropertypolynomialsizegraph}.
\end{proof}

\begin{pte}
    \label{pte:graphs:feasible_path}
    Any path in graph $\Gpoint{}$ satisfies order constraints, ramping constraints and min-up/down constraints.
\end{pte}
\begin{proof}
    See Appendix \ref{anx:HUC:graph:feasiblepath}.
\end{proof}

Paths in graph $\Gpoint{}$ satisfy order constraints, ramping constraints and min-up/down constraints (see Property \ref{pte:graphs:feasible_path}) but may not satisfy the window constraints of the 1-HUC problem. Therefore, solving the AWCLPP on $\Gpoint{}$ provides an optimal solution to the 1-HUC problem. 

Note that  the NMCKP structure of the 1-HUC involves identical weights due to operating points being the same over the time periods. It follows that the weakened dominance rule \ref{dm:dominancerule3} inside the BORWin algorithm should eliminate many symmetric paths
as mentioned in Remark \ref{rmk:efficient} of Section \ref{sec:AWXLPP-application}.


The construction of graph $\Gpoint{}$ is illustrated in \textbf{Figure \ref{fig:graph_1-huc}} from \textbf{Example \ref{ex:graphs:example_graph_point}}.


\begin{example}
    \label{ex:graphs:example_graph_point}
Consider an instance of the 1-HUC problem for $\nbt{}=5$ time periods.
Accounting for the idle operating point, the unit operates on 3 operating points: ($\deb{0}=0$, $\pui{0}=0$), ($\deb{1}=6$, $\pui{1}=8$), ($\deb{2}=5$, $\pui{2}=6$).
The ramps rates are $\rampup{}=\rampdown{}=6$ and the duration of the min-up/down constraints is $\lenpallier=3$.
The bounds are $\binfstar{}=$(0, 0,  7, 18) and $\bsupstar{}=$(11, 18, 18, 18). 
The value of operating points are: $\valop{\cdot}{1}=(0, 2.8, 1)$, $\valop{\cdot}{2}=(0, -6.8, -6.2)$, $\valop{\cdot}{3}=(0, 0.4, -0.8)$, $\valop{\cdot}{4}=(0, -11.6, -9.8)$ and $\valop{\cdot}{5}=(0, 2.0, 0.4)$.

\textbf{Figure \ref{fig:graph_1-huc}} represents the graph $\Gpoint{}$ associated with this instance.
The vertices appear in black if they can be reached from $s$, and in gray otherwise.
Dotted lines separate the vertices with respect to each operating point.
For readability purposes, the values and the resource of the arcs are not represented in this graph.
As the value only depends on the time period $t$ and the operating point $i$ they are the same for any arc heading to any vertex ($t$, $i$, $l$) no matter the value of $l$.
\textbf{Table \ref{tab:graphs:example_values}} gives the value of each combination of $t$ and $i$.
Similarly, the resource only depends on the operating point $i$ which is the same for any arc heading to any vertex ($t$, $i$, $l$) no matter the value of $t$ or $l$.
For operating point 0, the resource is 0; for operating point 1, the resource is 6 and for operating points 2, the resource is 11.
\end{example}

\begin{figure}[!htb]
\centering
\includegraphics[width=\textwidth]{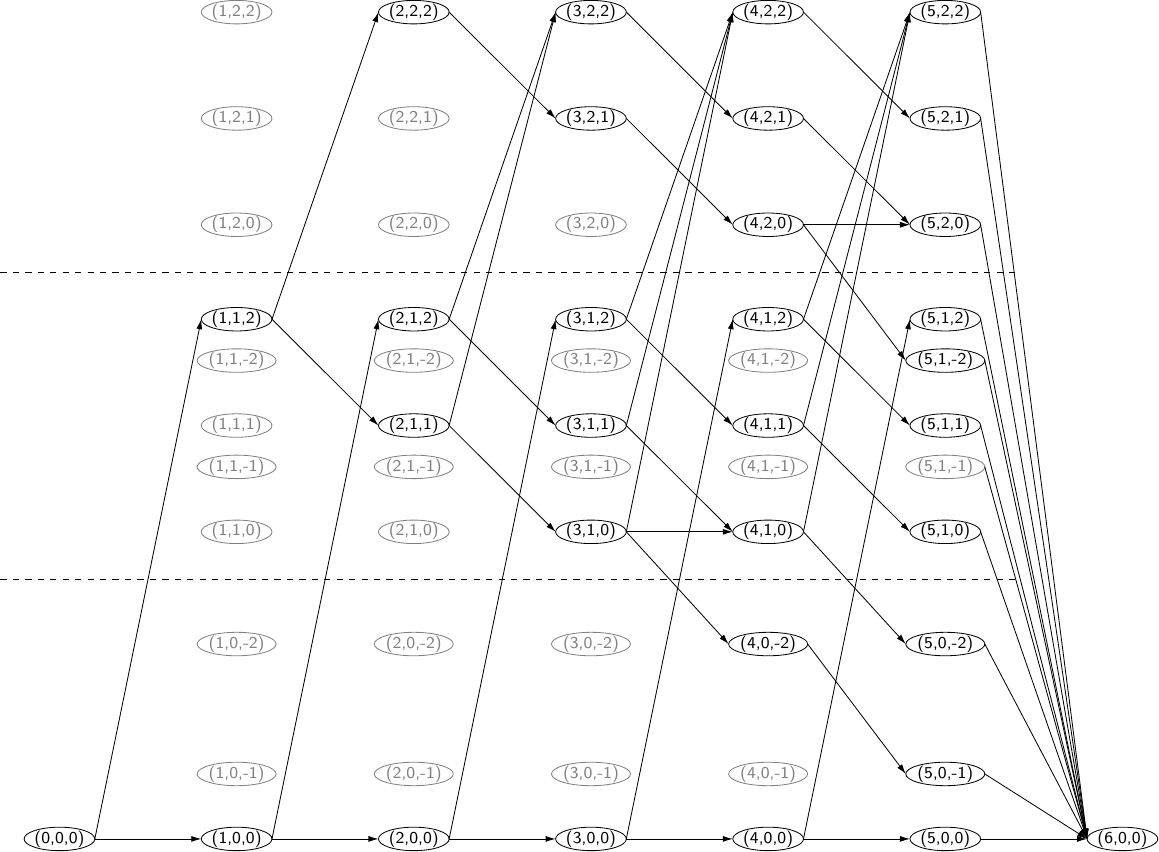}
\caption{Graph $\Gpoint$ of \textbf{Example \ref{ex:graphs:example_graph_point}}}
\label{fig:graph_1-huc}
\end{figure}

\begin{table}[htb!]
    \centering
    \begin{tabular}{c|r|r|r}
        & $i=0$ & $i=1$ & $i=2$ \\
         $t=1$ & 0 & 2.8 & 3.8 \\
         $t=2$ & 0 & -6.8 & -13.0 \\
         $t=3$ & 0 & 0.4 & -0.4 \\
         $t=4$ & 0 & -11.6 & -21.4 \\
         $t=5$ & 0 & 2.0 & 2.4 
    \end{tabular}
    \caption{Values of the arcs of graph $\Gpoint{}$ depicted in \textbf{Figure \ref{fig:graph_1-huc}}}
    \label{tab:graphs:example_values}
\end{table}

\begin{rmk}
    \label{rmk:graphs:source_vertex}
    The 1-HUC problem is solved in practice on a daily basis, and the plant is not stopped between two consecutive days.
    This means that one needs to consider the last decision of the previous day when solving the 1-HUC problem.
    This can be taken into account by modifying source vertex $s$ as follows.
    Consider the path for the previous day going through vertex $(T,i,l)$.
    One can initialize $s=(0,i,l)$.
    As such, the plant can be operating continuously from one day to another, without violating any constraint.
    Note however that modifying vertex $s$ in such a manner does not modify the structure of the graph.
    Hence, in the following we consider $s$=(0,0,0) without loss of generality.
\end{rmk}

\section{Experimental results}
\label{sec:results}

All presented results are computed on a single thread of an Intel Core i7-9850H CPU @ 2.60GHz processor of 12 cores, with Linux as operating system.
All algorithms are developed with C++ and version 12.8 of CPLEX is used.
The code for the BORWin algorithm is available online \footnote{\url{https://github.com/Eegann/BORWin}}.

A generic Hydro Instance Generator (HIG) has been provided by Dimitri Thomopulos and is available online \footnote{\url{https://www.lix.polytechnique.fr/Labo/Dimitri.Thomopulos/libraries/HIG.html}}. However, preliminary results in  \cite{heintzmann2022DP} indicate that 1-HUC instances created with HIG can be relatively straightforward for CPLEX to solve: when applied to our mathematical model, CPLEX solves most of these instances in under a second.
Preliminary results have also shown that when the price of the energy $\valpui{t}$ is very similar from one time period to another, instances of the 1-HUC are harder to solve.
These preliminary results are consistent with the observations on instances of the Knapsack Problem, which are more difficult to solve when their values are highly correlated to their weight \cite{pisinger2005hard}.

For this study, we consider instead two sets of instances derived from real-world EDF's plants and reservoirs, as they are less easy to solve with CPLEX. 
In the first set of 83 instances, denoted as set A, the initial volume and some of the minimum and maximum volume per time period have been adjusted to ensure that at least one of the bounds on the window is active for at least one time period, as in \cite{heintzmann2022DP}. 
The second set of 83 instances, denoted as set B, is built as follows.
For each instance of the first set, an instance of the second set is created with $\valpui{t}$ being a random value in $[0.95\cdot \valpui{1};1.05\cdot \valpui{1}]$.

For our computational analysis we compare four approaches: model $\model{}$ solved by CPLEX, RCSPP algorithm as described in \cite{van2018shortest}, HA* algorithm \cite{heintzmann2023efficient} and BORWin.

\textbf{Figure \ref{fig:cactus_1}} (resp. Figure \textbf{\ref{fig:cactus_2}}) shows, for each approach, the number of instances solved with respect to the time for instance set A (resp. instance set B).
\textbf{Table \ref{tab:res_borwin_a1}} to \textbf{Table \ref{tab:res_borwin_b2}} show the value of the solutions and the time required by each approach for each instance considered.
For CPLEX, we also provide the number of nodes developed, denoted as \#node, as well as the optimality gap, being opt (resp inf, sub) if the solution returned is optimal (resp. infeasible, suboptimal).
We also added the number of iterations of the second phase for BORWin, denoted as \#iter.

Clearly, these figures show that HA* and the RCSPP algorithms are not suitable to solve the 1-HUC problem.
On the contrary, BORWin appears to be very well suited as it is the most efficient approach as soon as the computational times exceed 3 seconds.
Moreover, in each set, BORWin solved 15 more instances than CPLEX  which is the current approach at EDF.
These results also confirm the preliminary results, as it appears that instances of set B are harder to solve than the ones of set A.

HA* algorithm is not suitable due to its upper bound, which does not take into account ramping or min-up/down constraints, hence being too loose to efficiently guide the enumeration.
The RCSPP algorithm is inefficient as the dominance rule is seriously weakened due to the lower bound, as stated in \textbf{Section \ref{sec:litterature_1-huc}}.
When it comes to solving the MILP with CPLEX, there are 12 instances of the first set, and 5 of the second set, where the solution returned is infeasible due to numerical errors, as it violates one of the window constraints.
This violation is in most cases below 0.1\%, but can reach up to 10\%.
For instance 64 in \textbf{Table \ref{tab:res_borwin_a2}}, the volume of the upstream reservoir violates all upper bounds from time period 29 to 96.
At time period $\nbt$, the volume reaches 29175.04 whereas the upper bound $\volmax{T}{1}$ is 26380.
Such numerical errors have been observed, and a study has shown that these errors are due to floatting-point precision of the solver \cite{bendotti2016impact,sahraoui2019real}.
In addition to the infeasible solutions, there are 2 instances of the first set, and 6 of the second set where the instances are suboptimal.
Instance 48 from set B shows a case where the value of solution obtained when solving the MILP with CPLEX is 0.009\% below the value of the optimal solution obtained with BORWin.
This is consistent with the optimality gap of CPLEX, which is $10^{-4}$ by default.
Note that it is also possible that solving the MILP with CPLEX yields a solution that is suboptimal and also infeasible.
This is the case for instance 57 from set A.

\begin{figure}[htb!]
    \centering
    \includegraphics[scale=1.5]{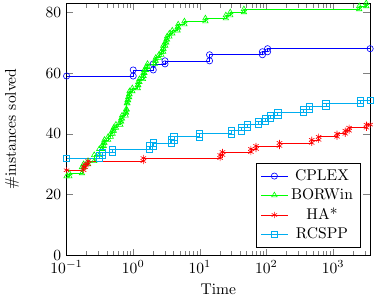}
    \caption{Number of instances solved by each approach with respect to the time for instance set A}
    \label{fig:cactus_1}
\end{figure}
    
\begin{figure}[htb!]
    \centering
    \includegraphics[scale=1.5]{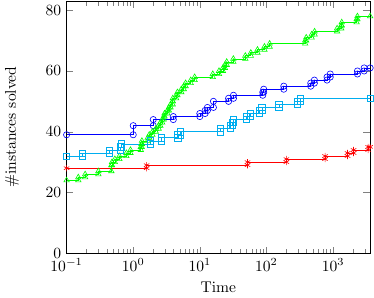}
    \caption{Number of instances solved by each approach with respect to the time for instance set B}
    \label{fig:cactus_2}
\end{figure}

\section{Conclusion}
\label{sec:conclusion}

In this paper, we proposed the BORWin algorithm to solve exactly any mixed integer maximization problem featuring special structure  $(P)$ characterized by the presence of several additional constraints. 
The first phase provides a family of lower bounds for $(P)$ based on a bi-objective relaxation of the additional constraints. It is shown that the obtained bounds strongly relate to Lagrangian dual bounds.
In the case where the special structure within $(P)$ is an acyclic longest-path, problem $(P)$ can be identified to a window constrained acyclic longest-path problem. For such problems, the second phase exploits these upper bounds to perform an efficient label extension procedure.
Moreover, if $(P)$ has other underlying structures for which efficient algorithms exist, it is shown that complementary upper bounds can be used to still accelerate the second phase. A typical example is when the additional constraints have a knapsack structure. 
Experimental results are provided for the 1-HUC, which features both longest-path and nested multiple-choice knapsack structures. 
On the easiest instances solved by Cplex in less than a second, BORWin remains a bit slower as it takes up to 3 seconds. However, on more difficult instances, it shows that BORWin algorithm outperforms other approaches in the sense that it is able to solve significantly more instances within time limits.

As a perspective, BORWin algorithm could be used for other applications showing a similar structure, for example smart charging of one electric vehicule under battery capacity constraints.
Another perspective would be to speed-up BORWin on the easiest instances by hybridization with other approaches, for example with an efficient MILP solver.
The resulting algorithm could then be used as a pricing algorithm within a decomposition framework to solve large-scale instances of problems involving many blocks with structure $(P)$. For example, this is the case for HUC instances with many reservoirs and power-plants interconnected in a water valley, or smart charging of a large fleet of electric vehicles coupled by grid constraints.

Finally, BORWin algorithm opens new optimization perspectives at EDF, as it could be integrated in operational tools to replace MILP solvers for single hydro units optimization. Moreover, window constraints appear in many other industrial contexts at EDF (optimization of gas units, optimization of electricity storage, ...) and BORWin algorithm could prove very useful to help speed-up classical RCSPP algorithms currently used.



\printbibliography

\appendix

\section{Proofs}

\subsection{Proof of \textbf{Property \ref{pte:graphs_twophases:valid_upper_bound1}}}
\label{anx:graphs_twophases:valid_upper_bound1}

\begin{proof}
   Consider a solution $x_3$ of $(BOR)$ and suppose that  $\agg{\delta}{x_3}> \agg{\delta}{x_{1}}$. 

    Consider the case $\arcvalobj{1}{x_{1}}<\arcvalobj{1}{x_{3}}<\arcvalobj{1}{x_{2}}$ illustrated by \textbf{Figure \ref{fig:graphs_twophases:upper_bound_aggregated_1}}.
    In such a case, $x_{1}$ and $x_{2}$ cannot be two consecutive solutions in $S$ by definition.
    Indeed, either $x_{3}$, or a Pareto-supported solution dominating $x_{3}$ is between $x_{1}$ and $x_{2}$ in $S$.

    Consider the case $\arcvalobj{1}{x_{1}}< \arcvalobj{1}{x_{2}}< \arcvalobj{1}{x_{3}}$ illustrated by \textbf{Figure \ref{fig:graphs_twophases:upper_bound_aggregated_2}}. Note in this case that $\arcvalobj{2}{x_{1}}\geq \arcvalobj{2}{x_{2}}\geq \arcvalobj{2}{x_{3}}$ otherwise solutions $x_1$ and $x_2$ would not be Pareto-supported. 
    Let $\delta'$ be such that $\agg{\delta'}{x_{1}}=\agg{\delta'}{x_{3}}$.
    We suppose that $\agg{\delta'}{x_{2}}\geq\agg{\delta'}{x_{1}}$ to show a contradiction.
    We then know the following:
    \begin{align*}
       \arcvalobj{1}{x_{1}}+\delta\cdot \arcvalobj{2}{x_{1}}< \arcvalobj{1}{x_{3}}+\delta\cdot \arcvalobj{2}{x_{3}}\\
         \arcvalobj{1}{x_{1}}+\delta'\cdot \arcvalobj{2}{x_{1}} \leq \arcvalobj{1}{x_{2}}+\delta'\cdot \arcvalobj{2}{x_{2}}
    \end{align*}
    We can deduce $\delta \cdot (\arcvalobj{2}{x_{1}}-\arcvalobj{2}{x_{3}})< (\arcvalobj{1}{x_{3}}-\arcvalobj{1}{x_{1}})$ and $\delta'\cdot (\arcvalobj{2}{x_{1}}-\arcvalobj{2}{x_{2}})\leq  (\arcvalobj{1}{x_{2}}-\arcvalobj{1}{x_{1}})$.
    Using equalities:
        \begin{align*}
         \arcvalobj{1}{x_{1}}+\delta\cdot \arcvalobj{2}{x_{1}} =  \arcvalobj{1}{x_{2}}+\delta\cdot \arcvalobj{2}{x_{2}}\\
        \arcvalobj{1}{x_{1}}+\delta'\cdot \arcvalobj{2}{x_{1}} =  \arcvalobj{1}{x_{3}}+\delta'\cdot \arcvalobj{2}{x_{3}}\\
       \end{align*} we obtain from the inequality featuring $\delta$:
    \begin{align*}
        &\frac{\arcvalobj{2}{x_{1}}-\arcvalobj{2}{x_{3}}}{\arcvalobj{2}{x_{1}}-\arcvalobj{2}{x_{2}}}<\frac{\arcvalobj{1}{x_{3}}-\arcvalobj{1}{x_{1}}}{\arcvalobj{1}{x_{2}}-\arcvalobj{1}{x_{1}}}
    \end{align*}
    and from the inequality featuring $\delta'$:
    \begin{align*}
        & \frac{\arcvalobj{2}{x_{1}}-\arcvalobj{2}{x_{3}}}{\arcvalobj{2}{x_{1}}-\arcvalobj{2}{x_{2}}} \geq \frac{\arcvalobj{1}{x_{3}}-\arcvalobj{1}{x_{1}}}{\arcvalobj{1}{x_{2}}-\arcvalobj{1}{x_{1}}}
    \end{align*}
    which is a contradiction.
    Hence $\arcvalobj{1}{x_{1}}< \arcvalobj{1}{x_{2}}< \arcvalobj{1}{x_{3}}$ implies that $\agg{\delta'}{x_{2}} < \agg{\delta'}{x_{1}}=\agg{\delta'}{x_{3}}$.
    \textbf{Lemma \ref{lem:graphs_twophases:not_pareto}} shows that in such a case $x_{2}$ cannot be Pareto-supported, which is a contradiction with $x_2 \in S$.

    A similar contradiction can be obtained in the case $\arcvalobj{1}{x_{3}}< \arcvalobj{1}{x_{1}} < \arcvalobj{1}{x_{2}}$.
\end{proof}

\begin{figure}[htb!]
    \centering
    \begin{subfigure}{0.49\textwidth}
    \centering
    \includegraphics[width=\linewidth]{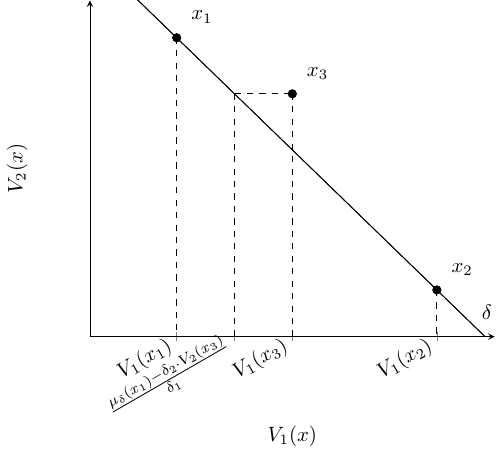}
    \caption{Case  $\arcvalobj{1}{x_{1}}<\arcvalobj{1}{x_{3}}<\arcvalobj{1}{x_{2}}$}
    \label{fig:graphs_twophases:upper_bound_aggregated_1}
    \end{subfigure}
    \begin{subfigure}{0.49\textwidth}
    \centering
    \includegraphics[width=\linewidth]{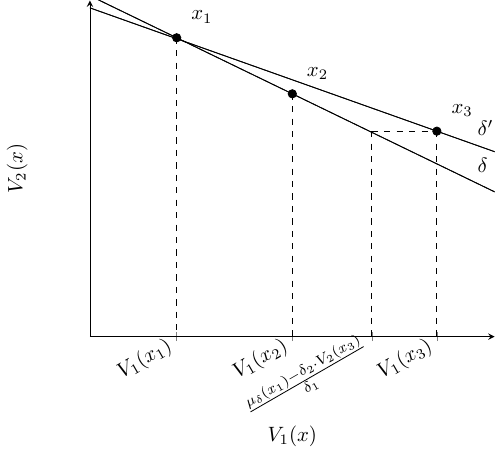}
    \caption{Case  $\arcvalobj{1}{x_{1}}<\arcvalobj{1}{x_{2}}<\arcvalobj{1}{x_{3}}$}
    \label{fig:graphs_twophases:upper_bound_aggregated_2}
    \end{subfigure}
    \caption{Illustrations for \textbf{Property \ref{pte:graphs_twophases:valid_upper_bound1}}}
\end{figure}

\subsection{Proof of \textbf{Theorem \ref{thm:graphs_twophases:best_upper_bound}}}
\label{anx:graphs_twophases:best_upper_bound}
\begin{proof}
    Consider another pair $(x_{1}',x_{2}')$ of consecutive solutions in $S$, and $\delta'$ such that $\agg{\delta'}{x_{1}'}=\agg{\delta'}{x_{2}'}$.
Illustations can be found in 
\textbf{Figure \ref{fig:graphs_twophases:best_pair_1}} and \textbf{Figure \ref{fig:graphs_twophases:best_pair_2}}.
   
    Suppose $\bsupvalobj{1}{x_{1}'}{\delta'} <\bsupvalobj{1}{x_{1}}{\delta}$, $i.e.$, $${\agg{\delta'}{x_{1}'}}-{\agg{\delta}{x_{1}}}< ({\delta'}-{\delta} ) \cdot \beta$$ 
      
      Using \textbf{Property} \ref{pte:graphs_twophases:valid_upper_bound1}, we have for any solution $x$ of $(BOR)$:
      $${\agg{\delta'}{x}} = \arcvalobj{1}{x} + \delta' \arcvalobj{2}{x}  \leq {\agg{\delta'}{x'_{1}}}$$ 
      
    Replacing $\arcvalobj{1}{x}$ by $\agg{\delta}{x} - \delta \arcvalobj{2}{x}$, we obtain 
     $$
        \left(  {\delta'}-{\delta}\right) \cdot \arcvalobj{2}{x} \leq {\agg{\delta'}{x_{1}'}}-{\agg{\delta}{x}}$$
      

Therefore, applying to $x=x_1$ and $x=x_2$, and using ${\agg{\delta}{x_{1}}}={\agg{\delta}{x_{2}}}$:
\begin{align*}
\left(  {\delta'}-{\delta}\right) \cdot \arcvalobj{2}{x_{1}} \leq {\agg{\delta'}{x_{1}'}}-{\agg{\delta}{x_{1}}} < ({\delta'}-{\delta} ) \cdot \beta \\
\left(  {\delta'}-{\delta}\right) \cdot \arcvalobj{2}{x_{2}} \leq {\agg{\delta'}{x_{1}'}}-{\agg{\delta}{x_{1}}} < ({\delta'}-{\delta} ) \cdot \beta
\end{align*}

As $\arcvalobj{2}{x_{2}} <\beta \leq \arcvalobj{2}{x_{1}}$, then there is a contradiction.

\end{proof}

\begin{figure}[htb!]
    \centering
    \begin{subfigure}{0.48\textwidth}
    \centering
    \includegraphics[width=\linewidth]{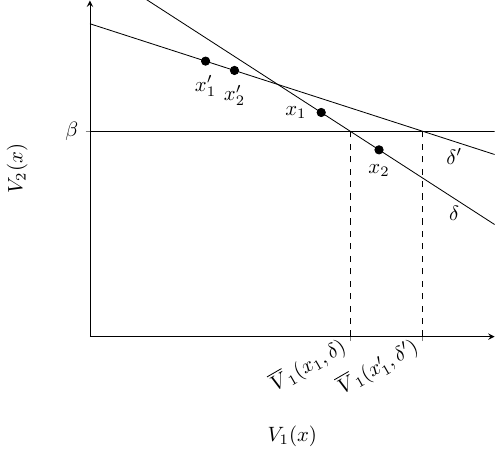}
    \caption{Case $\arcvalobj{1}{x_{1}}<\arcvalobj{1}{x_{2}}\leq \arcvalobj{1}{x_{1}'}<\arcvalobj{1}{x_{2}'}$}
    \label{fig:graphs_twophases:best_pair_1}
    \end{subfigure}
    \begin{subfigure}{0.48\textwidth}
    \centering
    \includegraphics[width=\linewidth]{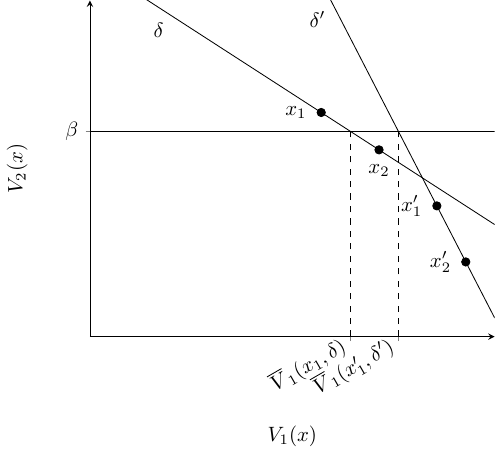}
    \caption{Case $\arcvalobj{1}{x_{1}'}<\arcvalobj{1}{x_{2}'}\leq \arcvalobj{1}{x_{1}}<\arcvalobj{1}{x_{2}}$}
    \label{fig:graphs_twophases:best_pair_2}
    \end{subfigure}
    \caption{Illustrations for \textbf{Theorem \ref{thm:graphs_twophases:best_upper_bound}}}
\end{figure}

\subsection{Proof of \textbf{Property \ref{property:polynomialnberofverticesofGc}}}
\label{anx:HUC:graph:proofofpropertypolynomialsizegraph}

\begin{proof}
    There are $ (\nbop{}+1) \cdot (2 \lenpallier{} - 1)$ vertices per time period.
    Hence, there are $\nbt{} \cdot (\nbop+1) \cdot (2 \lenpallier{} - 1) + 2$ vertices in $\Gpoint{}$ in total.
    As $\lenpallier{} \leq \nbt{}$, we deduce that the number of vertices is at most $\nbt{} \cdot (\nbop{}+1) \cdot (2 \nbt{} - 1) + 2$
    
    From the definition of the HUC problem, there are data for each operating points, such as $\deb{i}$, but also for each time period, such as $\bsup{t}$.
    Hence the size of the instance is of $\mathcal{O}(\nbt{} + \nbop{})$.
    
    As the number of verticies is of $\mathcal{O}(\nbt{}^{2}\cdot \nbop{})$ the number of vertices of $\Gpoint{}$ is polynomial with respect to the size of the instance.
\end{proof}

\subsection{Proof of \textbf{Property \ref{pte:graphs:feasible_path}}}
\label{anx:HUC:graph:feasiblepath}

\begin{proof}
    Consider an arc between ($t$, $i$, $l$) and ($t+1$, $i'$, $l'$).
    Clearly, in the case $i=i'$, the ramping constraints are satisfied if such an arc exists in $\Gpoint{}$.
    In the case $i< i'$, such an arc only exists if $\sum_{j=i+1}^{i}\deb{j} \leq \rampdown{}$ by construction of $\Gpoint{}$.
    In the case $i>i'$, such an arc only exists if $\sum_{j=i'}^{i-1}\deb{j} \leq \rampup{}$ by construction of $\Gpoint{}$.
    Consequently, ramping constraints are always satisfied.

    By construction of $\Gpoint{}$, from a vertex ($t$, $i$, $l$), the only way to reach operating point $i'>i$ is through the arc towards ($t+1$, $i'$, $\lenpallier{}-1$).
    Also by construction of $\Gpoint{}$, from ($t+1$, $i'$, $\lenpallier{}-1$,u), one needs to go through
    all vertices ($t+1+\tau$, $i'$, $\lenpallier{}-1-\tau$), with $\tau \in \{1,\ldots, \lenpallier{}-2\}$, and ($t+\lenpallier{}$, $i'$, $0$) in order to reach operating point $i''<i'$.
    In total, a path must go through at least $1+\lenpallier{}-2+ 1=\lenpallier{}$ vertices with operating point $i'$ before heading to a vertex with operating point $i''<i'$.
    Hence, the min-up constraint is satisfied.
    We can prove in a similar way that the min-down constraints are satisfied.
\end{proof}

\section{Numerical results}

In this appendix, detailed experimental results are presented (see Section \ref{sec:results}).
Note that in Tables \ref{tab:res_borwin_a1} and \ref{tab:res_borwin_b1}, instance 21 shows an optimal solution value equal to zero. Indeed, in this instance the initial reservoir volume is equal to the minimum volume of each time period and there is no intake. Therefore, all the water flows must trivially be zero in any solution.

\begin{table}[!htb]
    \centering
    \begin{adjustbox}{max width=1\textwidth, center=\textwidth}
    \begin{tabular}{r||S[table-format=7.2]|S[table-format=1.2]|S[table-format=4.2]|r||S[table-format=7.2]|S[table-format=4.2]||S[table-format=7.2]|S[table-format=4.2]||S[table-format=7.2]|r|S[table-format=4.2]}
        & \multicolumn{4}{c}{CPLEX} & \multicolumn{2}{c||}{RCSPP}& \multicolumn{2}{c||}{HA*} & \multicolumn{3}{c}{BORWin}\\
        \text{instance} & \text{value} & \text{gap} & \text{time} & \text{\#nodes} & \text{value} & \text{time} & \text{value} & \text{time} & \text{value} & \text{\#iter} & \text{time} \\
            1 & -27092.4 & \text{opt} & 14.0 & 24213 & \text{-} & 3969.29 & \text{-} & 3600.33 & -27092.4 & 2950 & 1.43\\
            2 & 42334.7 & \text{opt} & 1.0 & 9653 & 42334.7 & 3.77 & \text{-} & 3600.41 & 42334.7 & 2482 & 1.4\\
            3 & 2480.68 & \text{opt} & 104.0 & 97567 & \text{-} & 3842.11 & \text{-} & 3600.36 & 2480.68 & 5475 & 3.49\\
            4 & 1556.68 & \text{opt} & 0.0 & 531 & \text{-} & 3806.62 & \text{-} & 3600.48 & 1556.68 & 316 & 0.51\\
            5 & 108120.0 & \text{opt} & 0.0 & 863 & \text{-} & 3620.11 & \text{-} & 3600.34 & 108120.0 & 860 & 1.64\\
            6 & -2550.61 & \text{opt} & 0.0 & 630 & \text{-} & 3963.0 & \text{-} & 3600.44 & -2550.61 & 310 & 0.87\\
            7 & 3031.07 & \text{inf} & 0.0 & 77 & \text{-} & 3857.97 & \text{-} & 3600.3 & 3031.07 & 2753 & 2.64\\
            8 & 13903.4 & \text{inf} & 0.0 & 2487 & \text{-} & 3654.9 & \text{-} & 3600.28 & 13903.4 & 9644 & 12.12\\
            9 & -81458.9 & \text{inf} & 22.0 & 9845 & \text{-} & 3791.48 & \text{-} & 3600.2 & -81458.9 & 4507 & 5.87\\
            10 & -622046.0 & 0.17 & 3599.0 & 164270 & \text{-} & 3618.49 & \text{-} & 3600.2 & -622046.0 & 784 & 3.22\\
            11 & 41736.5 & \text{opt} & 0.0 & 0 & 41736.5 & 112.82 & \text{-} & 3600.52 & 41736.5 & 17 & 1.56\\
            12 & -32130.1 & \text{opt} & 87.0 & 10279 & \text{-} & 3798.79 & \text{-} & 3600.14 & -32130.1 & 582 & 3.86\\
            13 & -185784.0 & 1.58 & 3599.0 & 184953 & \text{-} & 3699.51 & \text{-} & 3600.19 & -185784.0 & 264 & 2.43\\
            14 & 19048.7 & \text{opt} & 3.0 & 2537 & 19048.7 & 29.5 & \text{-} & 3600.39 & 19048.7 & 2421 & 0.77\\
            15 & 4876.22 & \text{opt} & 0.0 & 0 & 4876.22 & 0.01 & 4876.22 & 0.07 & 4876.22 & 7 & 0.08\\
            16 & 12856.2 & \text{opt} & 0.0 & 1155 & 12856.2 & 0.06 & 12856.2 & 1738.4 & 12856.2 & 1832 & 1.17\\
            17 & 14970.0 & \text{opt} & 0.0 & 435 & 14970.0 & 0.34 & 14970.0 & 606.87 & 14970.0 & 1143 & 0.37\\
            18 & 12647.3 & \text{opt} & 0.0 & 1801 & 12647.3 & 0.07 & \text{-} & 3600.33 & 12647.3 & 1845 & 0.66\\
            19 & 9276.1 & \text{opt} & 0.0 & 159 & 9276.1 & 0.04 & 9276.1 & 157.2 & 9276.1 & 815 & 0.31\\
            20 & 12600.0 & \text{opt} & 0.0 & 45 & 12600.0 & 0.31 & 12600.0 & 69.8 & 12600.0 & 601 & 0.34\\
            21 & 0.0 & \text{opt} & 0.0 & 0 & 0.0 & 0.0 & 0.0 & 0.01 & 0.0 & 1 & 0.01\\
            22 & 21996.5 & \text{opt} & 0.0 & 0 & 21996.5 & 0.04 & 21996.5 & 0.01 & 21996.5 & 0 & 0.0\\
            23 & 96870.5 & \text{opt} & 14.0 & 32704 & 96870.5 & 9.82 & 96870.5 & 3167.85 & 96870.5 & 2897 & 3.02\\
            24 & 18974.6 & \text{opt} & 0.0 & 0 & 18974.6 & 0.02 & 18974.6 & 0.01 & 18974.6 & 0 & 0.0\\
            25 & 197441.0 & \text{opt} & 0.0 & 0 & 197441.0 & 2.0 & 197441.0 & 477.82 & 197441.0 & 2058 & 2.12\\
            26 & 21588.0 & \text{opt} & 0.0 & 0 & 21588.0 & 0.04 & 21588.0 & 0.01 & 21588.0 & 5 & 0.36\\
            27 & 95900.6 & \text{opt} & 0.0 & 0 & 95900.6 & 0.02 & 95900.6 & 0.01 & 95900.6 & 0 & 0.01\\
            28 & 102450.0 & \text{opt} & 0.0 & 0 & 102450.0 & 0.02 & 102450.0 & 0.01 & 102450.0 & 0 & 0.01\\
            29 & 72778.7 & \text{inf} & 4.0 & 11675 & \text{-} & 3690.14 & \text{-} & 3600.52 & 72778.7 & 31422 & 28.62\\
            30 & -5397.97 & \text{opt} & 0.0 & 0 & -5397.97 & 0.01 & -5397.97 & 0.01 & -5397.97 & 0 & 0.0\\
            31 & 705.98 & \text{opt} & 0.0 & 0 & 705.98 & 0.01 & 705.98 & 0.01 & 705.98 & 6 & 0.06\\
            32 & 24792.1 & \text{opt} & 0.0 & 0 & 24792.1 & 0.01 & 24792.1 & 0.01 & 24792.1 & 0 & 0.0\\
            33 & 3079.3 & \text{opt} & 0.0 & 0 & 3079.3 & 0.05 & 3079.3 & 0.01 & 3079.3 & 0 & 0.0\\
            34 & 62604.5 & \text{opt} & 0.0 & 0 & 62604.5 & 355.76 & 62604.5 & 1147.09 & 62604.5 & 344 & 0.64\\
            35 & 100832.0 & \text{opt} & 0.0 & 0 & 100832.0 & 0.02 & 100832.0 & 0.01 & 100832.0 & 0 & 0.01\\
            36 & 28104.4 & \text{opt} & 0.0 & 0 & 28104.4 & 0.49 & 28104.4 & 0.21 & 28104.4 & 17 & 0.21\\
            37 & 421369.0 & \text{opt} & 0.0 & 177 & \text{-} & 3723.65 & \text{-} & 3600.41 & 421369.0 & 95 & 0.46\\
            38 & -4281.16 & \text{opt} & 0.0 & 0 & -4281.16 & 95.53 & \text{-} & 3600.49 & -4281.16 & 1776 & 1.47\\
            39 & -31910.4 & \text{opt} & 0.0 & 4955 & \text{-} & 3658.38 & \text{-} & 3600.35 & -31910.4 & 3130 & 3.14\\
            40 & 18323.5 & \text{opt} & 0.0 & 71 & 18323.5 & 42.47 & \text{-} & 3600.58 & 18323.5 & 594 & 0.84\\
            41 & -2480.26 & \text{opt} & 0.0 & 0 & -2480.26 & 0.0 & -2480.26 & 0.01 & -2480.26 & 26 & 0.11\\
            42 & 4529.62 & \text{opt} & 0.0 & 0 & 4529.62 & 0.03 & 4529.62 & 0.01 & 4529.62 & 0 & 0.0\\
    \end{tabular}
    \end{adjustbox}
    \caption{Performance of model $\model{}$ solved with CPLEX, the RCSPP algorithm, HA* and BORWin on instances 1 to 42 of set A}
    \label{tab:res_borwin_a1}
\end{table}

\begin{table}[!htb]
    \centering
    \begin{adjustbox}{max width=1\textwidth, center=\textwidth}
    \begin{tabular}{r||S[table-format=7.2]|S[table-format=1.2]|S[table-format=4.2]|r||S[table-format=7.2]|S[table-format=4.2]||S[table-format=7.2]|S[table-format=4.2]||S[table-format=7.2]|r|S[table-format=4.2]}
        & \multicolumn{4}{c}{CPLEX} & \multicolumn{2}{c||}{RCSPP}& \multicolumn{2}{c||}{HA*} & \multicolumn{3}{c}{BORWin}\\
        \text{instance} & \text{value} & \text{gap} & \text{time} & \text{\#nodes} & \text{value} & \text{time} & \text{value} & \text{time} & \text{value} & \text{\#iter} & \text{time} \\
            43 & 269050.0 & \text{opt} & 0.0 & 0 & 269050.0 & 0.03 & 269050.0 & 0.01 & 269050.0 & 0 & 0.0\\
            44 & 919601.0 & \text{sub} & 5.0 & 23903 & \text{-} & 4535.04 & \text{-} & 3600.5 & 919605.0 & 2198 & 1.19\\
            45 & 287722.0 & \text{inf} & 0.0 & 26 & \text{-} & 3689.87 & \text{-} & 3600.36 & 287722.0 & 72 & 0.68\\
            46 & 52834.5 & \text{opt} & 0.0 & 0 & 52834.5 & 0.07 & 52834.5 & 0.04 & 52834.5 & 0 & 0.01\\
            47 & 459095.0 & \text{opt} & 0.0 & 0 & 459095.0 & 0.04 & 459095.0 & 0.01 & 459095.0 & 0 & 0.0\\
            48 & 462869.0 & \text{inf} & 1.0 & 1571 & \text{-} & 3780.74 & \text{-} & 3600.45 & 462869.0 & 693 & 0.82\\
            49 & 42341.8 & \text{opt} & 0.0 & 0 & 42341.8 & 0.06 & 42341.8 & 0.03 & 42341.8 & 0 & 0.0\\
            50 & 4905.83 & \text{opt} & 1.0 & 8813 & \text{-} & 3608.06 & \text{-} & 3600.43 & 4905.83 & 2614 & 2.18\\
            51 & 40192.6 & \text{opt} & 0.0 & 0 & 40192.6 & 0.01 & 40192.6 & 0.02 & 40192.6 & 0 & 0.0\\
            52 & 48378.6 & \text{opt} & 0.0 & 280 & \text{-} & 3678.46 & \text{-} & 3600.39 & 48378.6 & 784 & 0.55\\
            53 & 28142.9 & \text{opt} & 0.0 & 0 & 28142.9 & 1.75 & 28142.9 & 0.19 & 28142.9 & 9 & 0.03\\
            54 & 3838650.0 & \text{sub} & 0.0 & 0 & 3838990.0 & 75.46 & \text{-} & 3600.46 & 3838990.0 & 1687 & 0.78\\
            55 & 9577.87 & \text{opt} & 0.0 & 425 & \text{-} & 3606.21 & \text{-} & 3600.73 & 9577.87 & 900 & 0.8\\
            56 & 28471.9 & \text{opt} & 0.0 & 0 & 28471.9 & 0.01 & 28471.9 & 0.03 & 28471.9 & 0 & 0.01\\
            57 & 3931900.0 & \text{inf} & 0.0 & 1177 & \text{-} & 3716.47 & \text{-} & 3600.29 & 3931920.0 & 287191 & 2468.04\\
            58 & 750878.0 & \text{opt} & 0.0 & 544 & \text{-} & 3637.65 & \text{-} & 3600.58 & 750878.0 & 15667 & 24.59\\
            59 & 19159.0 & \text{opt} & 0.0 & 7 & 19159.0 & 51.78 & 19159.0 & 20.16 & 19159.0 & 31 & 0.26\\
            60 & 25720.8 & \text{opt} & 0.0 & 0 & 25720.8 & 0.05 & 25720.8 & 0.01 & 25720.8 & 0 & 0.0\\
            61 & -19718.7 & \text{opt} & 0.0 & 713 & \text{-} & 3686.43 & \text{-} & 3600.6 & -19718.7 & 1555 & 0.88\\
            62 & 12003.3 & \text{opt} & 0.0 & 30 & 12003.3 & 438.36 & 12003.3 & 21.86 & 12003.3 & 109 & 0.49\\
            63 & 935608.0 & \text{opt} & 0.0 & 0 & 935608.0 & 0.06 & 935608.0 & 0.02 & 935608.0 & 0 & 0.0\\
            64 & 124600.0 & \text{opt} & 2.0 & 8219 & \text{-} & 3624.53 & \text{-} & 3600.48 & 124600.0 & 15577 & 45.85\\
            65 & 379220.0 & \text{opt} & 0.0 & 0 & 379220.0 & 0.01 & 379220.0 & 0.01 & 379220.0 & 0 & 0.0\\
            66 & 164134.0 & \text{opt} & 2.0 & 10774 & 164134.0 & 2586.77 & \text{-} & 3600.61 & 164134.0 & 5171 & 4.63\\
            67 & 703224.0 & \text{opt} & 0.0 & 0 & 703224.0 & 0.02 & 703224.0 & 0.02 & 703224.0 & 0 & 0.0\\
            68 & 94517.5 & \text{opt} & 0.0 & 51 & 94517.5 & 0.02 & 94517.5 & 0.03 & 94517.5 & 52 & 0.42\\
            69 & 65259.0 & \text{inf} & 3.0 & 3410 & \text{-} & 3672.31 & \text{-} & 3600.29 & 65259.0 & 5138 & 4.73\\
            70 & -2612.74 & \text{opt} & 0.0 & 0 & -2612.74 & 0.0 & -2612.74 & 0.01 & -2612.74 & 0 & 0.0\\
            71 & 667875.0 & \text{inf} & 1.0 & 1478 & \text{-} & 3750.4 & \text{-} & 3600.43 & 667875.0 & 1458 & 2.83\\
            72 & -627719.0 & \text{opt} & 0.0 & 0 & \text{-} & 3719.42 & \text{-} & 3600.33 & -627719.0 & 84 & 0.8\\
            73 & 48011.4 & \text{inf} & 0.0 & 24 & \text{-} & 3603.42 & \text{-} & 3600.45 & 48011.4 & 352 & 0.97\\
            74 & 71518.2 & \text{opt} & 0.0 & 371 & \text{-} & 3610.12 & \text{-} & 3600.52 & 71518.2 & 678 & 1.24\\
            75 & 16119.0 & \text{opt} & 0.0 & 0 & 16119.0 & 4.08 & 16119.0 & 1.42 & 16119.0 & 239 & 0.17\\
            76 & 4976.64 & \text{opt} & 0.0 & 0 & 4976.64 & 0.07 & 4976.64 & 0.02 & 4976.64 & 0 & 0.01\\
            77 & -3343.57 & \text{opt} & 0.0 & 186 & -3343.57 & 779.71 & -3343.57 & 1525.54 & -3343.57 & 222 & 0.17\\
            78 & -3209.93 & \text{opt} & 0.0 & 0 & -3209.93 & 0.0 & -3209.93 & 0.01 & -3209.93 & 0 & 0.0\\
            79 & 64146.1 & \text{opt} & 0.0 & 0 & \text{-} & 3742.06 & \text{-} & 3600.29 & 64146.1 & 3982 & 2.82\\
            80 & 15155.4 & \text{inf} & 0.0 & 0 & 15155.4 & 147.59 & 15155.4 & 58.44 & 15155.4 & 287 & 0.26\\
            81 & 84971.2 & \text{opt} & 0.0 & 1000 & \text{-} & 3711.35 & \text{-} & 3600.43 & 84971.2 & 252119 & 3148.39\\
            82 & 33343.4 & \text{opt} & 0.0 & 0 & \text{-} & 3661.97 & \text{-} & 3600.54 & 33343.4 & 227 & 0.31\\
            83 & 775.58 & \text{opt} & 0.0 & 25 & 775.58 & 0.01 & 775.58 & 0.18 & 775.58 & 142 & 0.18\\
    \end{tabular}
    \end{adjustbox}
    \caption{Performance of model $\model{}$ solved with CPLEX, the RCSPP algorithm, HA* and BORWin on instances 43 to 83 of set A}
    \label{tab:res_borwin_a2}
\end{table}\begin{table}[!htb]
    \centering
    \begin{adjustbox}{max width=1\textwidth, center=\textwidth}
    \begin{tabular}{r||S[table-format=7.2]|S[table-format=1.2]|S[table-format=4.2]|r||S[table-format=7.2]|S[table-format=4.2]||S[table-format=7.2]|S[table-format=4.2]||S[table-format=7.2]|r|S[table-format=4.2]}
        & \multicolumn{4}{c}{CPLEX} & \multicolumn{2}{c||}{RCSPP}& \multicolumn{2}{c||}{HA*} & \multicolumn{3}{c}{BORWin}\\
        \text{instance} & \text{value} & \text{gap} & \text{time} & \text{\#nodes} & \text{value} & \text{time} & \text{value} & \text{time} & \text{value} & \text{\#iter} & \text{time} \\
            1 & -45097.8 & 0.24 & 3599.0 & 5159794 & \text{-} & 3728.16 & \text{-} & 3600.2 & -45097.8 & 230460 & 2318.99\\
            2 & 30816.8 & \text{opt} & 462.0 & 535149 & 30816.8 & 2.66 & \text{-} & 3600.33 & 30816.8 & 4385 & 2.73\\
            3 & -12364.3 & 0.87 & 3599.0 & 6511928 & \text{-} & 3657.08 & \text{-} & 3600.29 & -12364.3 & 185759 & 1325.15\\
            4 & -4771.67 & \text{opt} & 88.0 & 51047 & \text{-} & 3635.03 & \text{-} & 3600.44 & -4771.67 & 13212 & 73.31\\
            5 & 100037.0 & \text{opt} & 0.0 & 23 & \text{-} & 3627.04 & \text{-} & 3600.55 & 100037.0 & 6 & 0.79\\
            6 & -8330.37 & \text{opt} & 182.0 & 95280 & \text{-} & 3625.38 & \text{-} & 3600.56 & -8330.37 & 21368 & 58.23\\
            7 & -25268.1 & \text{opt} & 0.0 & 494 & \text{-} & 3641.08 & \text{-} & 3600.57 & -25268.1 & 12275 & 25.31\\
            8 & -15274.7 & \text{inf} & 0.0 & 105 & \text{-} & 3786.47 & -15274.7 & 198.96 & -15274.7 & 1943 & 3.93\\
            9 & -134802.0 & 0.01 & 3599.0 & 797938 & \text{-} & 3676.52 & \text{-} & 3600.33 & -134885.0 & 5155 & 3601.74\\
            10 & -495393.0 & \text{opt} & 907.0 & 119597 & \text{-} & 3628.56 & \text{-} & 3600.21 & -495393.0 & 3428 & 7.31\\
            11 & 22498.4 & \text{opt} & 1.0 & 1079 & 22498.4 & 50.23 & \text{-} & 3600.42 & 22498.4 & 185 & 2.41\\
            12 & -15704.4 & \text{opt} & 91.0 & 19017 & \text{-} & 3744.61 & \text{-} & 3600.31 & -15704.4 & 26 & 5.27\\
            13 & -362953.0 & 0.01 & 3599.0 & 749964 & \text{-} & 4030.62 & \text{-} & 3600.26 & -362953.0 & 106576 & 3601.59\\
            14 & 20036.7 & \text{opt} & 812.0 & 680819 & 20036.7 & 28.34 & \text{-} & 3600.42 & 20036.7 & 4734 & 1.65\\
            15 & 3060.83 & \text{opt} & 0.0 & 38 & 3060.83 & 0.02 & 3060.83 & 1648.99 & 3060.83 & 1214 & 0.47\\
            16 & 9849.72 & \text{opt} & 12.0 & 34012 & 9849.72 & 0.07 & \text{-} & 3600.39 & 9849.72 & 2722 & 1.3\\
            17 & 8633.68 & \text{opt} & 2.0 & 9440 & 8633.68 & 0.17 & \text{-} & 3600.51 & 8633.68 & 10660 & 6.11\\
            18 & 10167.2 & 0.4 & 3599.0 & 1124478 & 10167.2 & 0.07 & \text{-} & 3600.51 & 10167.2 & 7908 & 3.63\\
            19 & 8617.41 & \text{opt} & 1.0 & 5022 & 8617.41 & 0.04 & 8617.41 & 3360.88 & 8617.41 & 1965 & 0.91\\
            20 & 8781.47 & \text{opt} & 13.0 & 34045 & 8781.47 & 0.64 & \text{-} & 3600.52 & 8781.47 & 4018 & 1.74\\
            21 & 0.0 & \text{opt} & 0.0 & 0 & 0.0 & 0.0 & 0.0 & 0.01 & 0.0 & 1 & 0.01\\
            22 & 21996.5 & \text{opt} & 0.0 & 0 & 21996.5 & 0.05 & 21996.5 & 0.01 & 21996.5 & 0 & 0.0\\
            23 & 95051.4 & 0.63 & 3599.0 & 833133 & 95051.4 & 30.72 & \text{-} & 3600.4 & 95051.4 & 10440 & 31.82\\
            24 & 18974.6 & \text{opt} & 0.0 & 0 & 18974.6 & 0.02 & 18974.6 & 0.01 & 18974.6 & 0 & 0.0\\
            25 & 196839.0 & \text{opt} & 0.0 & 0 & 196839.0 & 1.83 & 196839.0 & 758.65 & 196839.0 & 1879 & 1.94\\
            26 & 21459.5 & \text{opt} & 0.0 & 0 & 21459.5 & 0.04 & 21459.5 & 0.01 & 21459.5 & 2 & 0.19\\
            27 & 95900.6 & \text{opt} & 0.0 & 0 & 95900.6 & 0.03 & 95900.6 & 0.01 & 95900.6 & 0 & 0.0\\
            28 & 102450.0 & \text{opt} & 0.0 & 0 & 102450.0 & 0.03 & 102450.0 & 0.01 & 102450.0 & 0 & 0.0\\
            29 & 62292.3 & \text{inf} & 2.0 & 6388 & \text{-} & 3626.06 & \text{-} & 3600.41 & 62292.3 & 55078 & 93.38\\
            30 & -5397.97 & \text{opt} & 0.0 & 0 & -5397.97 & 0.01 & -5397.97 & 0.01 & -5397.97 & 0 & 0.0\\
            31 & 99.34 & \text{opt} & 0.0 & 0 & 99.34 & 0.01 & 99.34 & 0.02 & 99.34 & 97 & 0.3\\
            32 & 24792.1 & \text{opt} & 0.0 & 0 & 24792.1 & 0.01 & 24792.1 & 0.01 & 24792.1 & 0 & 0.0\\
            33 & 3079.3 & \text{opt} & 0.0 & 0 & 3079.3 & 0.06 & 3079.3 & 0.01 & 3079.3 & 0 & 0.0\\
            34 & 59431.0 & \text{opt} & 0.0 & 0 & 59431.0 & 153.67 & \text{-} & 3600.5 & 59431.0 & 3689 & 2.18\\
            35 & 100832.0 & \text{opt} & 0.0 & 0 & 100832.0 & 0.03 & 100832.0 & 0.01 & 100832.0 & 0 & 0.0\\
            36 & 27052.8 & \text{opt} & 0.0 & 0 & 27052.8 & 0.44 & \text{-} & 3600.66 & 27052.8 & 427 & 0.48\\
            37 & 406009.0 & \text{sub} & 1.0 & 670 & \text{-} & 3662.9 & \text{-} & 3600.22 & 406023.0 & 134167 & 3601.37\\
            38 & -4316.46 & \text{opt} & 0.0 & 0 & -4316.46 & 85.35 & \text{-} & 3600.57 & -4316.46 & 5271 & 3.26\\
            39 & -42097.1 & \text{opt} & 4.0 & 17021 & \text{-} & 3695.39 & \text{-} & 3600.55 & -42097.1 & 3445 & 2.55\\
            40 & 15916.1 & \text{opt} & 0.0 & 124 & 15916.1 & 79.01 & \text{-} & 3600.5 & 15916.1 & 2444 & 2.87\\
            41 & -3187.16 & \text{opt} & 0.0 & 0 & -3187.16 & 0.0 & -3187.16 & 0.01 & -3187.16 & 97 & 0.15\\
            42 & 4529.62 & \text{opt} & 0.0 & 0 & 4529.62 & 0.04 & 4529.62 & 0.01 & 4529.62 & 0 & 0.0\\
    \end{tabular}
    \end{adjustbox}
    \caption{Performance of model $\model{}$ solved with CPLEX, the RCSPP algorithm, HA* and BORWin on  instances 1 to 42 of set B}
    \label{tab:res_borwin_b1}
\end{table}\begin{table}[!htb]
    \centering
    \begin{adjustbox}{max width=1\textwidth, center=\textwidth}
    \begin{tabular}{r||S[table-format=7.2]|S[table-format=1.2]|S[table-format=4.2]|r||S[table-format=7.2]|S[table-format=4.2]||S[table-format=7.2]|S[table-format=4.2]||S[table-format=7.2]|r|S[table-format=4.2]}
        & \multicolumn{4}{c}{CPLEX} & \multicolumn{2}{c||}{RCSPP}& \multicolumn{2}{c||}{HA*} & \multicolumn{3}{c}{BORWin}\\
        \text{instance} & \text{value} & \text{gap} & \text{time} & \text{\#nodes} & \text{value} & \text{time} & \text{value} & \text{time} & \text{value} & \text{\#iter} & \text{time} \\
            43 & 269050.0 & \text{opt} & 0.0 & 0 & 269050.0 & 0.04 & 269050.0 & 0.01 & 269050.0 & 0 & 0.0\\
            44 & 917201.0 & 0.06 & 3599.0 & 8558238 & \text{-} & 3866.17 & \text{-} & 3600.4 & 917035.0 & 190950 & 3601.38\\
            45 & 282424.0 & \text{sub} & 3.0 & 5610 & \text{-} & 3627.51 & \text{-} & 3600.49 & 282425.0 & 3858 & 2.95\\
            46 & 52834.5 & \text{opt} & 0.0 & 0 & 52834.5 & 0.06 & 52834.5 & 0.03 & 52834.5 & 0 & 0.01\\
            47 & 459095.0 & \text{opt} & 0.0 & 0 & 459095.0 & 0.05 & 459095.0 & 0.01 & 459095.0 & 0 & 0.0\\
            48 & 452277.0 & 0.04 & 3599.0 & 5319419 & \text{-} & 3625.98 & \text{-} & 3600.47 & 452318.0 & 123282 & 2254.47\\
            49 & 42341.8 & \text{opt} & 0.0 & 0 & 42341.8 & 0.06 & 42341.8 & 0.02 & 42341.8 & 0 & 0.0\\
            50 & -5494.62 & 4.65 & 3599.0 & 1298359 & \text{-} & 3671.95 & \text{-} & 3600.47 & -5494.62 & 91615 & 527.29\\
            51 & 40192.6 & \text{opt} & 0.0 & 0 & 40192.6 & 0.01 & 40192.6 & 0.02 & 40192.6 & 0 & 0.0\\
            52 & 42283.9 & \text{opt} & 10.0 & 34235 & \text{-} & 3641.65 & \text{-} & 3600.54 & 42283.9 & 6194 & 4.61\\
            53 & 27442.4 & \text{opt} & 0.0 & 0 & 27442.4 & 0.66 & 27442.4 & 0.1 & 27442.4 & 31 & 0.04\\
            54 & 3798460.0 & \text{sub} & 0.0 & 0 & 3798740.0 & 20.38 & \text{-} & 3600.47 & 3798740.0 & 31065 & 22.19\\
            55 & -1957.72 & \text{opt} & 2322.0 & 898717 & \text{-} & 3626.03 & \text{-} & 3600.54 & -1957.72 & 7688 & 19.39\\
            56 & 28471.9 & \text{opt} & 0.0 & 0 & 28471.9 & 0.01 & 28471.9 & 0.02 & 28471.9 & 0 & 0.01\\
            57 & 3922410.0 & \text{sub} & 0.0 & 352 & \text{-} & 3831.41 & \text{-} & 3600.29 & 3922560.0 & 76710 & 395.57\\
            58 & 716594.0 & \text{opt} & 2930.0 & 844090 & \text{-} & 3692.37 & \text{-} & 3600.62 & 714666.0 & 193649 & 3601.47\\
            59 & 17577.3 & \text{opt} & 2.0 & 1768 & 17577.3 & 5.06 & 17577.3 & 52.03 & 17577.3 & 514 & 1.34\\
            60 & 25720.8 & \text{opt} & 0.0 & 0 & 25720.8 & 0.04 & 25720.8 & 0.01 & 25720.8 & 0 & 0.0\\
            61 & -34801.1 & \text{inf} & 9.0 & 40790 & \text{-} & 3680.74 & \text{-} & 3600.56 & -34801.1 & 14291 & 23.78\\
            62 & 10607.8 & \text{opt} & 16.0 & 13431 & 10607.8 & 290.81 & 10607.8 & 2028.97 & 10607.8 & 3113 & 8.36\\
            63 & 935608.0 & \text{opt} & 0.0 & 0 & 935608.0 & 0.05 & 935608.0 & 0.02 & 935608.0 & 0 & 0.0\\
            64 & 95093.1 & \text{inf} & 0.0 & 0 & \text{-} & 3632.43 & \text{-} & 3600.61 & 95093.1 & 286 & 0.53\\
            65 & 379220.0 & \text{opt} & 0.0 & 0 & 379220.0 & 0.01 & 379220.0 & 0.01 & 379220.0 & 0 & 0.0\\
            66 & 138473.0 & 0.37 & 3599.0 & 651399 & 138473.0 & 325.18 & \text{-} & 3600.7 & 138473.0 & 48951 & 381.27\\
            67 & 703224.0 & \text{opt} & 0.0 & 0 & 703224.0 & 0.02 & 703224.0 & 0.02 & 703224.0 & 0 & 0.0\\
            68 & 94516.4 & \text{opt} & 0.0 & 57 & 94516.4 & 0.02 & 94516.4 & 0.02 & 94516.4 & 32 & 0.62\\
            69 & 66301.3 & \text{inf} & 7.0 & 4272 & \text{-} & 3659.77 & \text{-} & 3600.33 & 66301.3 & 2960 & 3.43\\
            70 & -2612.74 & \text{opt} & 0.0 & 0 & -2612.74 & 0.0 & -2612.74 & 0.01 & -2612.74 & 0 & 0.0\\
            71 & 618927.0 & \text{opt} & 16.0 & 16554 & \text{-} & 3665.96 & \text{-} & 3600.52 & 618927.0 & 24436 & 111.57\\
            72 & -842932.0 & \text{sub} & 2.0 & 588 & \text{-} & 3608.11 & \text{-} & 3600.32 & -842891.0 & 2503 & 4.41\\
            73 & 30603.6 & \text{opt} & 27.0 & 39539 & \text{-} & 3652.74 & \text{-} & 3600.37 & 30603.6 & 23572 & 48.14\\
            74 & 68651.0 & 0.1 & 3599.0 & 5690332 & \text{-} & 3663.35 & \text{-} & 3600.73 & 68651.0 & 8209 & 15.63\\
            75 & 15102.9 & \text{opt} & 0.0 & 0 & 15102.9 & 4.68 & \text{-} & 3600.54 & 15102.9 & 4086 & 3.91\\
            76 & 4976.64 & \text{opt} & 0.0 & 0 & 4976.64 & 0.06 & 4976.64 & 0.02 & 4976.64 & 0 & 0.0\\
            77 & -6975.34 & \text{opt} & 1.0 & 1676 & -6975.34 & 57.44 & \text{-} & 3600.63 & -6975.34 & 1202 & 1.35\\
            78 & -3209.93 & \text{opt} & 0.0 & 0 & -3209.93 & 0.0 & -3209.93 & 0.01 & -3209.93 & 0 & 0.0\\
            79 & 57148.6 & \text{opt} & 524.0 & 446548 & \text{-} & 3671.69 & \text{-} & 3600.48 & 57148.6 & 176537 & 1347.51\\
            80 & 14650.7 & \text{opt} & 0.0 & 0 & 14650.7 & 31.81 & \text{-} & 3600.64 & 14650.7 & 6600 & 5.74\\
            81 & 69986.4 & \text{opt} & 32.0 & 73328 & \text{-} & 3705.61 & \text{-} & 3600.38 & 69986.4 & 147543 & 1148.09\\
            82 & 31375.2 & \text{sub} & 0.0 & 0 & \text{-} & 3609.26 & \text{-} & 3600.6 & 31375.9 & 95735 & 476.4\\
            83 & 164.95 & \text{opt} & 0.0 & 600 & 164.95 & 0.01 & 164.95 & 1.59 & 164.95 & 492 & 0.47\\
    \end{tabular}
    \end{adjustbox}
    \caption{Performance of model $\model{}$ solved with CPLEX, the RCSPP algorithm, HA* and BORWin on  instances 43 to 83 of set B}
    \label{tab:res_borwin_b2}
\end{table}

\end{document}